\def\eq{\begin{equation}}
\def\endeq{\end{equation}}
\def\bbm{\begin{bmatrix}}
\def\ebm{\end{bmatrix}}
\newcommand{\C}{\mathbb{C}}
\newcommand{\Hankel}{\mathcal{H}}
\newcommand{\bigO}{\mathcal{O}}
\newcommand{\Prob}{\mathbb{P}}
\newcommand{\realR}{\mathbb{R}}
\newcommand{\T}{{\mathbb T}}
\newcommand{\Z}{\mathbb{Z}}
\newcommand{\be}{\beta}
\newcommand{\ga}{\gamma}
\newcommand{\de}{\delta}
\newcommand{\ep}{\epsilon}
\newcommand{\Sg}{\Sigma}
\newcommand{\sg}{\sigma}
\newcommand{\om}{\omega}
\newcommand{\hsgn}[1]{\epsilon(#1)}
\newcommand{\lattice}{L_{n, \tau}}
\renewcommand{\d}{\partial}
\newtheorem{theo}{Theorem}[section]
\newtheorem{prop}[theo]{Proposition}
\newtheorem{rhp}[theo]{Riemann--Hilbert Problem}
\newtheorem{lem}[theo]{Lemma}
\theoremstyle{remark}
\newtheorem{rmk}{Remark}
\newcommand{\NIBMT}{\ensuremath{\text{NIBM}^\mu_{0 \to T}}}
\numberwithin{equation}{section}
\title[Nonintersecting Brownian bridges]{Nonintersecting Brownian bridges on the unit \\ circle with drift}
\author{Robert Buckingham}
\author{Karl Liechty}
\thanks{The authors thank Dong Wang, who was involved in an early stage of this project, and Peter Miller for useful discussions.  
Robert Buckingham was supported by by the National Science Foundation through 
grant DMS-1615718 and by the Charles Phelps Taft Research Center through a 
Faculty Release Fellowship.  Karl Liechty was supported by the Simons Foundation through grant \#357872.}
\begin{document}

\begin{abstract} 
Nonintersecting Brownian bridges on the unit circle form a determinantal 
stochastic process exhibiting random matrix statistics for large numbers of 
walkers.  We investigate the effect of adding a drift term to walkers on the 
circle conditioned to start and end at the same position.  For each return 
time $T<\pi^2$ we show 
that if the absolute value of the drift is less than a critical value then 
the expected total winding number is asymptotically zero.  In addition, we 
compute the asymptotic distribution of total winding numbers in the 
double-scaling regime in which the expected total winding is finite.  
The method of proof is Riemann--Hilbert analysis of a certain family of 
discrete orthogonal polynomials with varying complex exponential weights.  
This is the first asymptotic analysis of such a class of polynomials.  We 
determine asymptotic formulas and demonstrate the emergence of a second band 
of zeros by a mechanism not previously seen for discrete orthogonal 
polynomials with real weights.

\vspace{-.1in}

\end{abstract}

%\noindent
%\textcolor{red}{
%To do:
%\begin{itemize}
%\item Add outline/plan for paper
%\end{itemize}
%}

\maketitle

\tableofcontents

\section{Introduction}

%Let $\nu$ be a measure on $\R$ with finite moments

In 1962, Dyson \cite{Dyson:1962} made the remarkable observation that the eigenvalues of an $n\times n$ GUE random matrix obey the same statistics as a certain process comprised of nonintersecting Brownian motions (NIBM).  Since then,  it has been shown that NIBM models give rise to a plethora of universal stochastic 
processes, including the sine, Airy, Pearcey, and tacnode processes \cite{TracyW:2004, TracyW:2006, BleherK:2007, DelvauxKZ:2011, FerrariV:2012, Johansson:2013}, which appear in a wide range of problems in probability and mathematical physics. In addition, NIBM models and related models of nonintersecting paths serve as tractable models for the study of subjects as diverse as transportation engineering \cite{BaikBDS:2006}, wetting and melting \cite{Fisher:1984}, polymers and random interfaces \cite{Schehr:2012}, Yang--Mills theory \cite{ForresterMS:2011}, dynamics of quantum systems \cite{leDoussalMS:2017}, etc.
 Technically it is often easiest to study nonintersecting Brownian bridges, in which the starting and ending points of the Brownian paths are fixed.
 
 A natural generalization is  to consider nonintersecting Brownian bridges on the 
circle. This model was studied in depth recently by Wang and the second author in \cite{LiechtyW:2016}. To be specific, they considered an ensemble of $n$ nonintersecting Brownian bridges on the unit circle $\T = \{e^{i\theta} | -\pi \le \theta <\pi\}\subset \C$ with diffusion parameter $n^{-1/2}$, conditioned to begin at a common point at time $t=0$ and to end at the same common point at time $t=T>0$. We can summarize the global properties of that model as the number of particles $n\to\infty$ obtained in \cite{LiechtyW:2016} as follows; see Figure \ref{no-drift-simulation}. The asymptotic behavior of the particles as $n\to\infty$ depends on the return time $T$. In particular it is shown that there is a critical value $T_c=\pi^2$ of the return time $T$ separating the {\it subcritical} return times from the {\it supercritical} return times. In the subcritical case $T<\pi^2$, the particles do not have enough time to wrap around the circle and the asymptotic behavior is identical to nonintersecting Brownian bridges on the real line. In this case the limiting density of particles at any fixed time $t\in (0,T)$ converges to a properly rescaled semi-circle distribution and the boundary of the convex hull of the paths in space-time converges to an ellipse. In the supercritical case $T>\pi^2$, as $n\to\infty$ there is a nonvanishing probability that some particles wrap around the circle, and the distribution of the sum of the winding numbers of the particles converges to a discrete normal distribution. This case is considerably more complicated, and both the density of particles at fixed times and the boundary of the convex hull of the paths in space-time are expressed by elliptic functions.

In the current work we extend the model discussed above to nonintersecting Brownian bridges on $\T$ with a drift. For planar Brownian bridges, adding a drift to the Brownian motion has no effect, but for Brownian bridges on the unit circle the drift has the effect of encouraging particles to wrap around the circle. The analysis of \cite{LiechtyW:2016} is based on the fact that the nonintersecting Brownian bridges on $\T$ form a determinantal process whose kernel can be expressed in terms of a system of discrete Gaussian orthogonal polynomials which may be studied asymptotically using Riemann--Hilbert methods. When a drift is added the structure of the determinantal process is the same, but now the discrete orthogonal polynomials we need to consider have a complex exponential weight. In general discrete orthogonal polynomials with complex weights are difficult to study asymptotically, and we are not aware of any such previous work.

\begin{figure}[h]\label{no-drift-simulation}
\begin{center}
\includegraphics[height=1.8in]{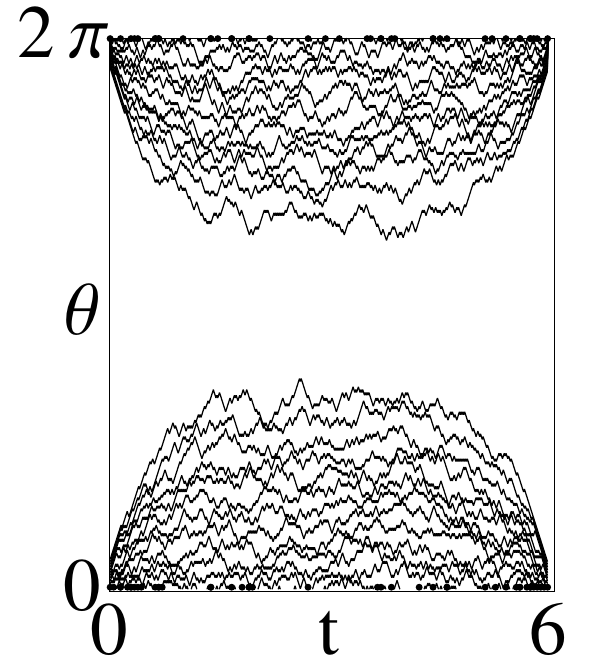}
\includegraphics[height=1.8in]{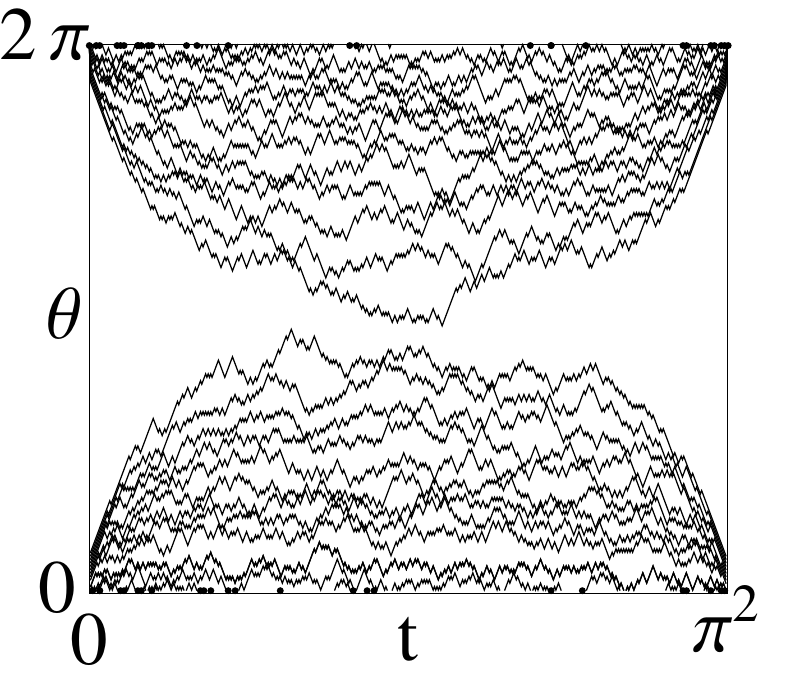}
\includegraphics[height=1.8in]{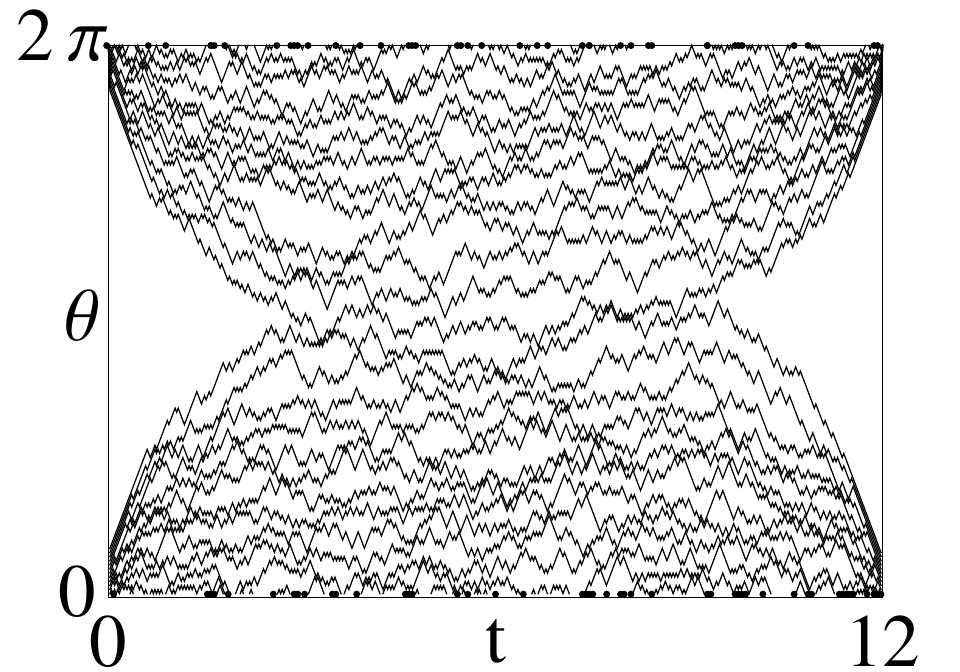}
\caption{Random walk simulations of a Dyson Brownian bridge on the circle with 24 walkers in the subcritical (left), critical (center), and supercritical (right) regimes.}
\label{random-walk-plots}
\end{center}
\end{figure}
\begin{figure}[h]
\begin{center}
\includegraphics[height=1.9in]{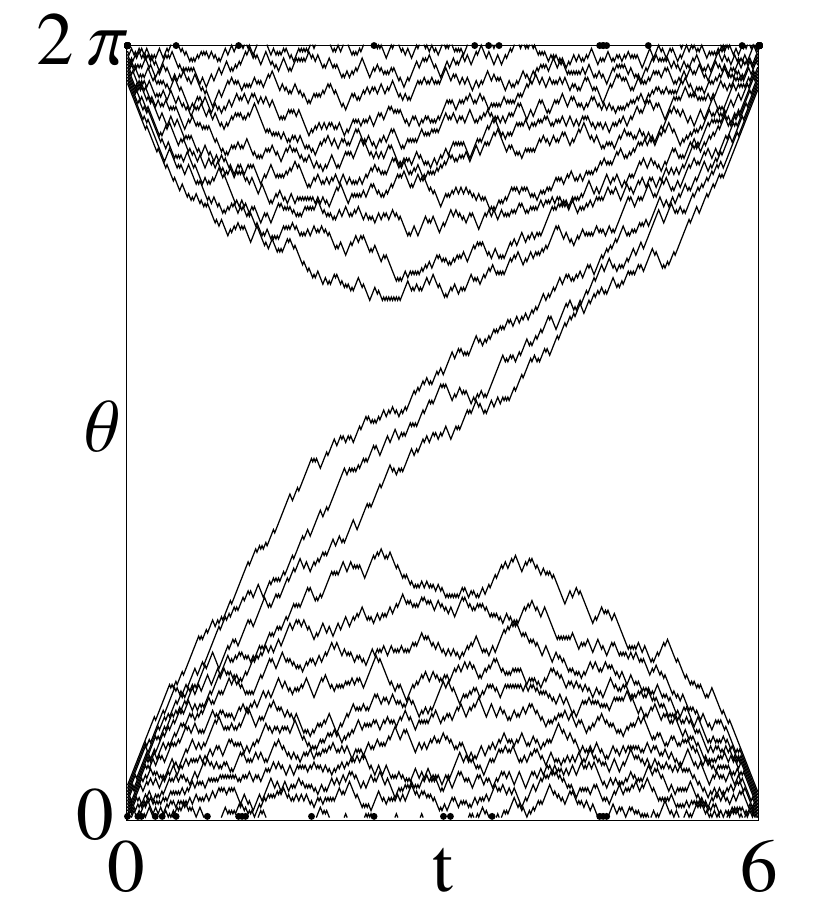}
\includegraphics[height=1.9in]{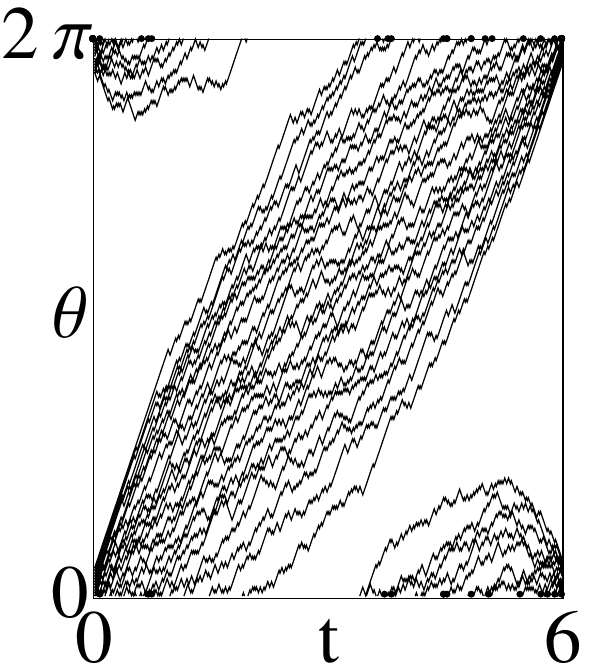}
\caption{Random walk simulations of a Dyson Brownian bridge on the circle 
with 24 walkers and total winding 3 (left) and 24 (right).}
\label{random-walk-crossers-plot}
\end{center}
\end{figure}

In the remainder of the introduction we define the model we study and present our 
results on winding numbers and orthogonal polynomials. 

\subsection{Definition of the model}
Consider a Brownian motion on $\realR$ with drift $\mu$ and diffusion 
parameter $\sg$. By definition, the probability density for the particle to move from 
position $x$ to position $y$ in time $t$ is
\begin{equation}
  P_\realR(x,y;t,\sigma,\mu) =  \frac{1}{\sqrt{2\pi t} \sigma}  \exp\left(-\frac{(y - x-t\mu )^2}{2t \sigma^2}\right).
   \end{equation}
Now consider a Brownian motion on the unit circle $\T$.  We refer to a 
particle at $e^{i\varphi}$ as being at position $\varphi$ and use the principal value of the argument $-\pi \le \varphi <\pi$.  Taking into account 
that the particle can wrap around the circle multiple times, the transition 
probability density for a particle starting at $\varphi$ to move to $\theta$ in time $t$ 
is
\begin{equation}
  P_\T(\varphi,\theta;t,\sigma,\mu) =  \frac{1}{\sqrt{2\pi t}\sigma}  \sum_{k=-\infty}^\infty \exp\left(-\frac{(\theta - \varphi-t\mu+2k\pi )^2}{2t\sigma^2}\right).
   \end{equation}
We will consider an ensemble of $n$ Brownian motions on the unit 
circle and set the diffusion parameter to be $\sg=n^{-1/2}$.  We introduce 
a phase parameter $\tau$ and define
\begin{equation}
\label{Pn-sum}
P_n(\varphi,\theta;t,\mu,\tau) :=  \sqrt{\frac{n}{2\pi t}}  \sum_{k=-\infty}^\infty \exp\left(-n\frac{(\theta - \varphi-t\mu+2k\pi )^2}{2t}+2\pi i k \tau\right).
\end{equation}
This is no longer a probability density in general, but the parameter $\tau$ allows us to keep track of exactly how many times 
particles wraps around the circle.  In particular, 
\eq
\mathbb{P}(\text{offset }\omega\,|\,\text{particle starts at }\varphi \text{ and ends at }\theta) = \int_0^1 \frac{P_n(\varphi,\theta;t,\mu,\tau) e^{-2\pi i \om \tau}}{P_n(\varphi,\theta;t,\mu,0)}d\tau.
\endeq
This can be seen by noting the integral isolates the $k=\omega$ term in 
\eqref{Pn-sum}.
   
Now consider the transition probability density for $n$ such Brownian 
particles on $\T$ conditioned not to intersect.  Let $A_n=\{a_1, \dots, a_n\}$ 
and $B_n=\{b_1, \dots, b_n\}$ be two sets of $n$ distinct points in $\T$ such 
that $-\pi \leq a_1 < a_2 < \dotsb < a_n < \pi$ and 
$-\pi \leq b_1 < b_2 < \dotsb < b_n < \pi$, and denote by 
$\mathcal{P}_n(A_n,B_n,t;\mu)$ the transition probability density of nonintersecting Brownian motions with the particles starting at the points $A_n$ and ending at the points $B_n$ after time $t$.  Note that we do not require that the particle that started at point $a_k$ ends at point $b_k$, but only that it ends at point $b_j$ for some $j=1,\dots, n$. 
 Introduce the notation
\begin{equation}\label{eq:def_hsgn}
  \hsgn{n} :=
  \begin{cases}
    0 & \text{if $n$ is odd}, \\
    \frac{1}{2} & \text{if $n$ is even}.
  \end{cases}
\end{equation}
We define the $n\times n$ determinant 
\begin{equation}
\mathcal{P}_n(A_n,B_n,t;\mu,\tau) := \det\bigg(P_n\big(a_j,b_k,t;\mu,\tau\big)\bigg)_{j,k=1}^n\,.
\end{equation}
Following \cite{LiechtyW:2016}, we then have that the transition probability 
density function for the NIBM on $\T$ with starting points $A_n$ and ending points $B_n$ is given exactly by $\mathcal{P}_n(A_n,B_n,t;\mu,\hsgn{n})$.  
%We can also keep track of the total winding with the formula 
%\begin{equation}
%\label{eq:total_offset1}
%\begin{split}
%\Prob(\textrm{particles move from }& A_n \textrm{ to } B_n \textrm { with total offset } \om) \\
%  & =e^{2\pi i\omega\hsgn{n}} \displaystyle\int_0^1 \frac{\mathcal{P}_n(A_n,B_n,t;\mu,\tau)}{\mathcal{P}_n(A_n,B_n,t;\mu,\hsgn{n})} e^{-2\pi i \om \tau}d\tau.
%\end{split}
%\end{equation}

At this point we fix a return time $T$ and take the limit as all of the starting and ending points go to zero.  Then the joint density of the particles at a fixed time $t\in(0,T)$ is given by
 \begin{equation}\label{eq:jpdf1}
 \lim_{\substack{a_1, \dots, a_n \to 0 \\ b_1,\dots, b_n \to 0}} \frac{\mathcal{P}_n(A_n,\Theta_n,t;\mu,\hsgn{n}) \mathcal{P}_n(\Theta_n,B_n,t;\mu,\hsgn{n})}{\mathcal{P}_n(A_n,B_n,t;\mu,\hsgn{n})}\,,
 \end{equation}
where $\Theta_n = \{ \theta_1, \dotsc, \theta_n \}$  describes the locations of the $n$ particles at time $t$. It is not difficult to see that such a limit exists, and so that our model is well defined (see \cite[Section 2.2]{LiechtyW:2016}).
This defines a model we denote as $\NIBMT$. 

The model \NIBMT\ is a determinantal process, meaning that for any fixed time $t\in (0,T)$, the correlation functions of the particles may be described by a particular determinantal formula. There exists some kernel function $K_n(x,y; t)$ such that the $k$-point correlation function for the positions of the particles at time $t$ is given by
\begin{equation} \label{eq:defn_Rmelon_special}
  R_{0\to T}^{(n)}(\theta_1, \dotsc, \theta_{k} ; t) = \det \left( K_n\left(\theta_i, \theta_j; t\right) \right)_{i, j = 1}^k\,.
\end{equation}
The kernel function $K_n\left(\theta_i, \theta_j; t\right)$ may be expressed in terms of orthogonal polynomials. Namely,  let $p^{(T,\mu,\tau)}_{n, j}(x)$ be the monic polynomial of degree $j$ that 
satisfies
\begin{equation} \label{eq:defn_of_discrete_Gaussian_OP}
 \frac{1}{n} \sum_{x \in \lattice} p^{(T,\mu,\tau)}_{n, j}(x) p^{(T,\mu,\tau)}_{n, k}(x)e^{-\frac{Tn}{2}(x^2-2i\mu x)} = h_{n,j}^{(T,\mu,\tau)}\de_{jk}
\end{equation}
for a sequence $\{h_{n,j}^{(T,\mu,\tau)}\}_{j=0}^\infty$ of normalizing constants, 
where the lattice $\lattice$ is defined as
 \begin{equation}
\label{lattice-def}
  \lattice := \left\{ \frac{k + \tau}{n} \mid k \in \Z \right\}.
\end{equation}
Also define the auxiliary function
\eq
S_{j,a}(\varphi;T,\mu,\tau,n)\equiv S_{j,a}(\varphi):=\frac{1}{n}\sum_{x\in L_{n,\tau}}p_{n,j}^{(T,\mu,\tau)}(x)e^{-an(x^2-2i\mu x)/2}e^{i\varphi nx}.
\endeq
The $\tau$-deformed correlation kernel is given by 
\eq
\label{tau-kernel}
K_n(\varphi,\theta; t,T,\mu,\tau):= \frac{n}{2\pi}\sum_{j=0}^{n-1}\frac{1}{h_{n,j}^{(T,\mu,\tau)}}S_{j,T-t}(\varphi)S_{j,t}(-\theta),
\endeq
and the correlation kernel in \eqref{eq:defn_Rmelon_special} is this kernel with $\tau=\hsgn{n}$:
\eq
\label{kernel}
K_n\left(\theta_i, \theta_j; t\right):=K_n(\theta_i,\theta_j; t,T,\mu,\hsgn{n}).
\endeq
Even though only the special case $\tau=\hsgn{n}$ in \eqref{tau-kernel} defines a correlation kernel, the $\tau$-deformed kernel was useful in \cite{LiechtyW:2016} for keeping track of the winding numbers in the model. 

\subsection{Distribution of winding numbers in \NIBMT\ }

In the process \NIBMT, let $\mathcal{W}_n(T,\mu)$ be the total winding number of the $n$ particles.
The distribution of $\mathcal{W}_n(T,\mu)$ can also be expressed in terms of the orthogonal polynomials \eqref{eq:defn_of_discrete_Gaussian_OP}. Introduce the Hankel determinant
\begin{equation}
\Hankel_n(T,\mu,\tau) := \det \left( \frac{1}{n} \sum_{x \in \lattice} x^{j + k - 2} e^{-\frac{Tn}{2}(x^2-2i\mu x)} dx \right)^n_{j, k = 1}.
\end{equation}
It is a standard result (see e.g. \cite{BleherL:2014}) that $\Hankel_n(T,\mu,\tau)$ is given in terms of the normalizing constants $h_{n,j}^{(T,\mu,\tau)}$ in \eqref{eq:defn_of_discrete_Gaussian_OP} as
\begin{equation}
\label{Hn-ito-hn}
\Hankel_n(T,\mu,\tau) := \prod_{j=0}^{n-1} h_{n,j}^{(T,\mu,\tau)}.
\end{equation}
The distribution of $\mathcal{W}_n(T,\mu)$ is given by
\begin{equation}
\label{eq:total_offset_formula}
\Prob(\mathcal{W}_n(T,\mu)=\om) = e^{2\pi i\omega\hsgn{n}} \int_0^1  \frac{\Hankel_n(T,\mu,\tau)}{\Hankel_n(T,\mu,\hsgn{n})} e^{-2\pi i \om \tau}d\tau, \quad \omega\in\mathbb{Z}.
\end{equation}
This formula is presented in \cite[Equation (185)]{LiechtyW:2016} in the case $\mu=0$, and its extension to general $\mu$ is straightforward.

We plot $\Prob(\mathcal{W}_3(1,\mu)=\omega)$, $\omega=0,...,3$ and
$\Prob(\mathcal{W}_6(1,\mu)=\omega)$, $\omega=0,...,6$ in Figure 
\ref{winding-plots}.  
\begin{figure}[h]
\begin{center}
\includegraphics[height=2.1in]{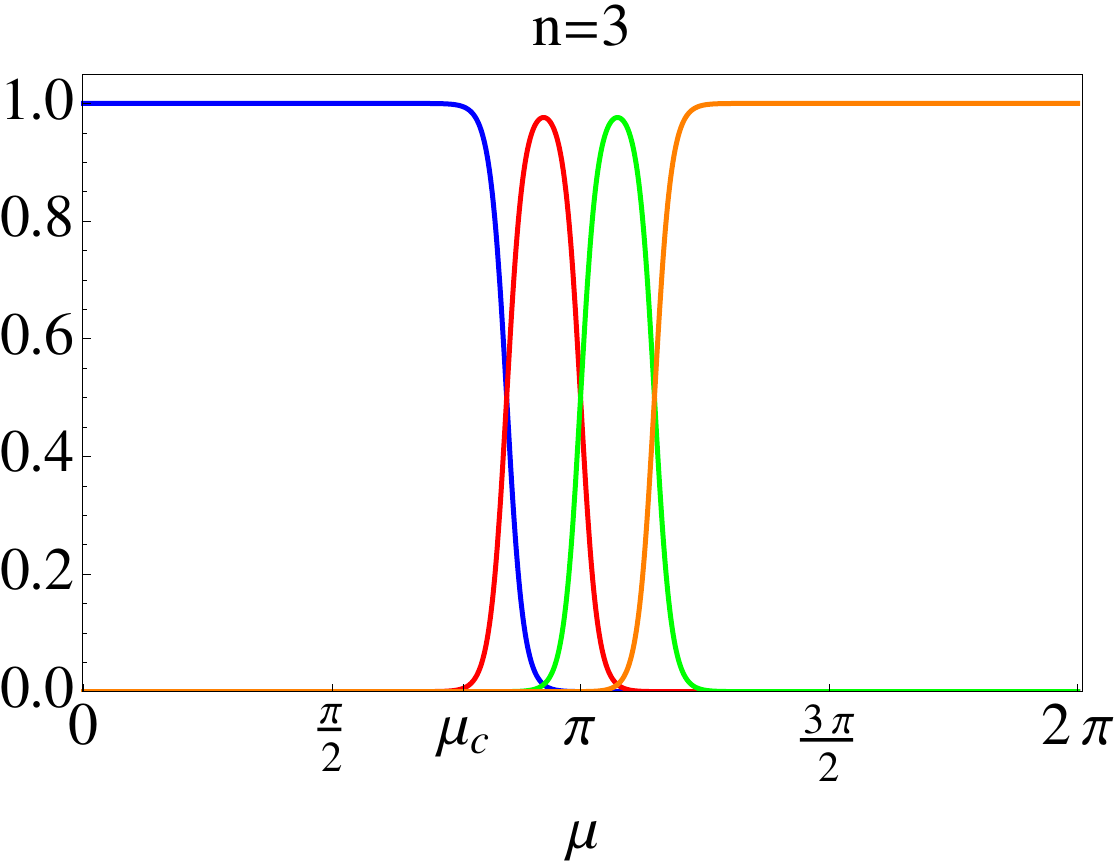}
\includegraphics[height=2.1in]{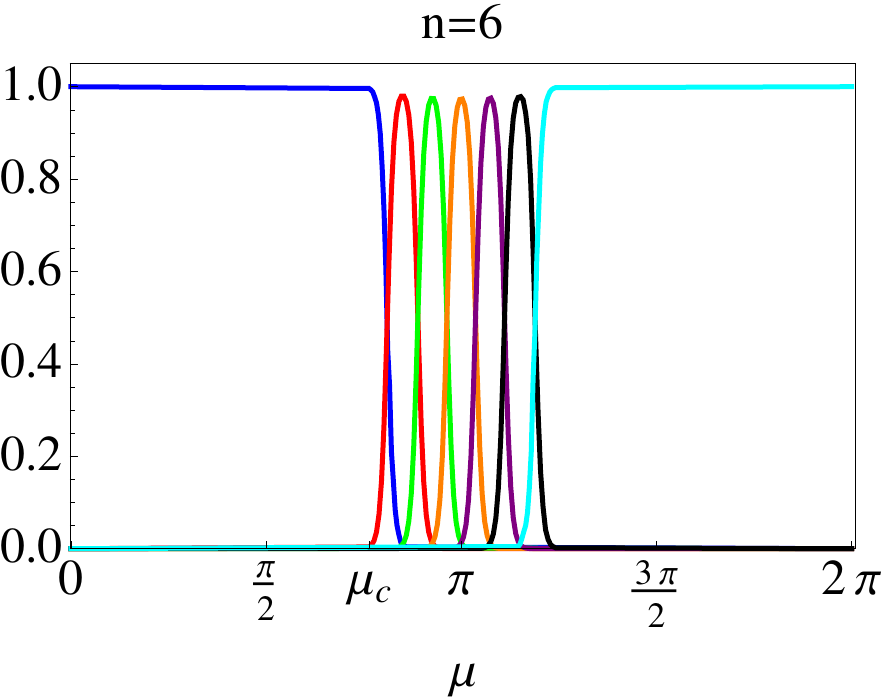}
\caption{The probability of each winding number as a function of $\mu$ for 
$n=3$ and $n=6$.  
Left plot: $\Prob(\mathcal{W}_3(1,\mu)=\omega)$ for $\omega=0$ (blue), 
1 (red), 2 (green), and 3 (orange).  Right plot:  
$\Prob(\mathcal{W}_6(1,\mu)=\omega)$ for $\omega=0$ (blue), 1 (red), 
2 (green), 3 (orange), 4 (purple), 5 (black), and 6 (cyan).  The critical 
drift $\mu_c(1)\approx 2.4016$ is indicated, with $\mu_c(T)$ defined in 
\eqref{mu-crit}.
}
\label{winding-plots}
\end{center}
\end{figure}
We prove in Theorem \ref{thm-subcrit-winding} that for each $T\in (0,\pi^2)$ there is a critical drift value $\mu_c(T)$ such that 
(asymptotically as $n\to\infty$) the expected winding is zero for 
$-\mu_c(T)<\mu<\mu_c(T)$.  The exact formula for $\mu_c(T)$ is given in 
\eqref{mu-crit}.  From plots such as those in Figure \ref{winding-plots}, it is 
possible to formulate further conjectures concerning the expected winding number 
for other values of $\mu$ and $T$.  For $\mu_c(T)<\mu<\frac{2\pi}{T}-\mu_c(T)$, 
there appears to be a transition region in which the expected winding number 
increases from 0 to $n$.  After this, the expected winding number appears to be 
(asymptotically as $n\to\infty$) $n$ for 
$\frac{2\pi}{T}-\mu_c(T)<\mu<\frac{2\pi}{T}+\mu_c(T)$.  This pattern appears to 
continue with the expected winding number increasing by $n$ when $\mu$ is 
increased by $\frac{2\pi}{T}$.  This conjectured behavior is illustrated in 
Figure \ref{winding-regions}.
\begin{figure}[h]
\begin{center}
\includegraphics[height=2.1in]{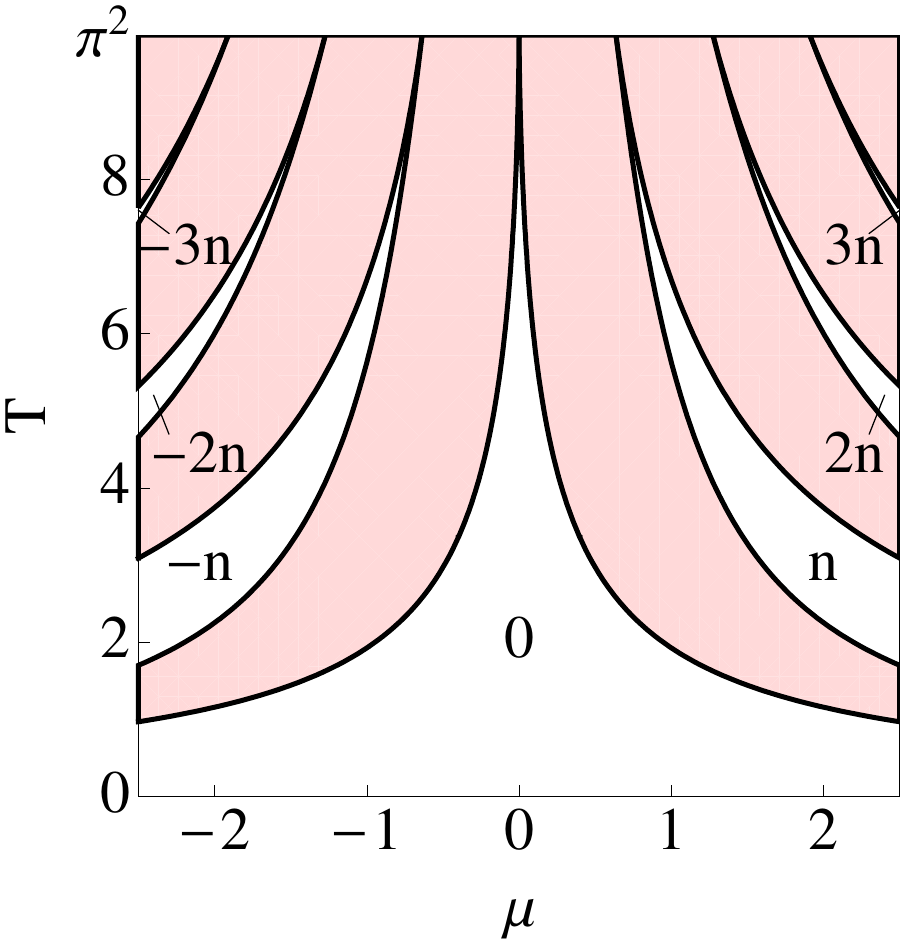}
\caption{The conjectured expected winding number (in the large-$n$ limit) for 
different values of $\mu$ and $T$.  The white regions have constant expected 
winding as indicated, while the transition regions are shaded.  The figure 
could be extended arbitrarily far in either direction in $\mu$, but it is 
necessary to restrict $T$ to the interval $[0,\pi^2]$.  Theorem 
\ref{thm-subcrit-winding} establishes the behavior in the zero-winding 
region, while Theorem \ref{thm-hermite-winding} details the behavior at the 
edge of this region.}
\label{winding-regions}
\end{center}
\end{figure}

We now state our results on the winding numbers.  Fix $T\in (0,\pi^2)$ and define 
the critical drift 
\begin{equation}
\label{mu-crit}
\mu_c \equiv \mu_c(T) := \frac{\sqrt{\pi^2-T}}{T} - \frac{\log T}{2\pi} + \frac{\log(\pi-\sqrt{\pi^2-T})}{\pi} \quad (0<T<\pi^2).
\end{equation}
For $|\mu|<\mu_c$, the asymptotic distribution of the random variable $\mathcal{W}_n(T,\mu)$, which represents the total winding number of the particles in \NIBMT, is almost surely 1 in the limit $n\to\infty$.
\begin{theo}[Winding numbers in the subcritical regime]
\label{thm-subcrit-winding}
Fix a return time $T\in(0,\pi^2)$ and a drift $\mu$ such that $|\mu|<\mu_c(T)$, as 
defined in \eqref{mu-crit}.  Then there is a constant $c>0$ such that 
\eq
\Prob(\mathcal{W}_n(T,\mu) = \om) = \begin{cases} 1 + \mathcal{O}\left(e^{-cn}\right), & \omega=0, \\ \mathcal{O}\left(e^{-cn}\right), & \omega\neq 0.\end{cases}
\endeq
\end{theo}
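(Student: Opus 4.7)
The starting point is the exact formula \eqref{eq:total_offset_formula} combined with \eqref{Hn-ito-hn}. Since $|e^{2\pi i\omega\hsgn{n}}|=1$ and $\int_0^1 e^{-2\pi i\omega\tau}\,d\tau=\delta_{\omega,0}$, the theorem reduces to establishing the pointwise bound
\[
\frac{\Hankel_n(T,\mu,\tau)}{\Hankel_n(T,\mu,\hsgn{n})}=1+O\bigl(e^{-cn}\bigr),\qquad\text{uniformly for }\tau\in[0,1].
\]
Via \eqref{Hn-ito-hn} this is equivalent to showing that $\sum_{j=0}^{n-1}\log h_{n,j}^{(T,\mu,\tau)}$ depends on $\tau$ only through exponentially small corrections.

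\textbf{Riemann--Hilbert analysis in the subcritical regime.}
I would obtain the normalizing constants $h_{n,j}^{(T,\mu,\tau)}$ via nonlinear steepest descent applied to the standard interpolation problem for the discrete OP defined by \eqref{eq:defn_of_discrete_Gaussian_OP}. Completing the square in the exponent shows that the effective potential is a Gaussian shifted in the imaginary direction by $i\mu$; without the lattice constraint, the associated equilibrium measure should be supported on a single (possibly complex-deformed) band $\Sigma$. The hypothesis $|\mu|<\mu_c(T)$, with $\mu_c(T)$ given by \eqref{mu-crit}, should translate into two facts, checked by a direct computation with the equilibrium problem: (i) the density of the equilibrium measure lies strictly below the lattice upper constraint of value $1$ everywhere on $\Sigma$, and (ii) the variational inequality guaranteeing equilibrium holds strictly on the complement of $\Sigma$, so no secondary band has opened. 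Under these conditions, the standard transformations $Y\to U\to T\to S\to R$ of Kuijlaars--McLaughlin type go through, and the jumps of the final error matrix $R$ on the real line away from $\Sigma$ take the form $I+e^{2\pi i\tau}\mathcal{O}(e^{-cn})$ in the subcritical regime, so $R=I+O(e^{-cn})$ uniformly in $\tau\in[0,1]$ and in $j$ with $j/n\in(0,1]$.

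\textbf{Summation and conclusion.}
Unravelling the Riemann--Hilbert transformations yields an asymptotic formula of the form $h_{n,j}^{(T,\mu,\tau)}=h_{n,j}^{\mathrm{main}}(T,\mu)\bigl(1+O(e^{-cn})\bigr)$, with a leading factor independent of $\tau$ and a relative error uniform in $j$. Taking logarithms and summing over $j=0,\dots,n-1$ incurs at most a polynomial factor in $n$ that is easily absorbed into $e^{-c'n}$ for any $c'<c$, so $\Hankel_n(T,\mu,\tau)/\Hankel_n(T,\mu,\hsgn{n})=1+O(e^{-c'n})$ uniformly in $\tau\in[0,1]$. Substituting into \eqref{eq:total_offset_formula} yields both of the bounds claimed in the theorem.

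\textbf{Main obstacle.}
The hardest step by far is the Riemann--Hilbert analysis itself. As emphasized in the abstract, this is the first asymptotic study of discrete orthogonal polynomials with complex exponential weights, so there is no off-the-shelf result to quote. The difficulties specific to the present setting are constructing the $g$-function so that both the upper lattice constraint and the imaginary shift $i\mu$ of the potential are handled consistently; verifying the correct signs of the lens-opening phase $\operatorname{Re}(g_+-g_-)$ on the relevant contours in the presence of the drift; and proving that the quantitative threshold in \eqref{mu-crit} is indeed the first value of $|\mu|$ at which one of the variational inequalities fails, as opposed to an earlier breakdown through the emergence of a secondary band of the kind alluded to in the abstract.
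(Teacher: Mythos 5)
There is a genuine gap at the crucial step, namely the upgrade from the generic Riemann--Hilbert error to an exponentially small one. In the subcritical steepest-descent analysis the final error matrix is \emph{not} $\mathbb{I}+\mathcal{O}(e^{-cn})$: its jump contour necessarily includes the boundaries of the Airy disks around the band endpoints $a,b$, where the local parametrices match the outer parametrix only to order $\mathcal{O}(n^{-1})$. So your claimed conclusion ``$R=\mathbb{I}+O(e^{-cn})$ uniformly in $\tau$'' is false, and with it the asserted formula $h_{n,j}^{(T,\mu,\tau)}=h_{n,j}^{\mathrm{main}}(T,\mu)\bigl(1+O(e^{-cn})\bigr)$. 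What is true is that the \emph{$\tau$-dependence} of the error is exponentially small (the parameter $\tau$ enters only through the interpolation factors on the contours $\realR+i(\mu+\epsilon)$ and $\realR-i\delta$, whose jumps are $\mathbb{I}+\mathcal{O}(e^{-cn})$ by the variational inequalities and Lemma \ref{lem:horizontal-line}); but extracting that requires an extra comparison argument, not just small-norm theory for a single RHP. This is exactly how the paper proceeds: after first obtaining the weaker $\mathcal{O}(n^{-1})$ version (Lemma \ref{lemma-subcrit-winding-algebraic-error}), it forms the ratio ${\bf Y}_n={\bf X}_n({\bf X}_n^{(c)})^{-1}$ of the discrete error matrix with the error matrix of the continuous (shifted Hermite) problem, whose jumps differ from the discrete ones only on contours where everything is exponentially close to $\mathbb{I}$; since the shifted Hermite coefficient is exactly $-in\mu$, one gets $c_{n,n,n-1}^{(T,\mu,\tau)}=-in\mu+\mathcal{O}(e^{-cn})$, which is what feeds the exponential bound. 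Some comparison of this kind (against the continuous problem, or against the discrete problem at a fixed reference $\tau$) is indispensable and is missing from your argument.

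A secondary structural difference also matters for feasibility. You propose to control $\sum_{j=0}^{n-1}\log h_{n,j}^{(T,\mu,\tau)}$ directly, which requires asymptotics for the normalizing constants uniformly in all degrees $j=0,\dots,n-1$, including $j$ with $j/n$ small, where the varying-weight analysis you sketch does not apply as stated. The paper sidesteps this entirely via the differential identity \eqref{eq:diff_tau0} and Proposition \ref{Hankel-integral-prop}: $\partial_\tau\log\Hankel_n(T,\mu,\tau)=inT\mu+Tc^{(T,\mu,\tau)}_{n,n,n-1}$, so only the subleading coefficient of the single degree-$n$ polynomial is needed, and the Hankel ratio in \eqref{eq:total_offset_formula} follows by integrating in $\tau$. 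Your reduction of the theorem to a $\tau$-uniform bound on $\Hankel_n(T,\mu,\tau)/\Hankel_n(T,\mu,\hsgn{n})$ and the identification of $\mu_c(T)$ as the threshold where a variational inequality first fails are both consistent with the paper; the missing ingredients are the comparison argument that isolates the $\tau$-dependence as exponentially small and (if you keep your decomposition) uniform control over all degrees $j$.
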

We will prove Theorem \ref{thm-subcrit-winding} in 
\S\ref{subsec-subcrit-winding}.

We now consider what we call the \emph{Hermite regime} when $\mu$ is 
close to $\mu_c$ (but $T$ is still bounded away from $\pi^2$).  Specifically, we 
fix a non-negative integer $k$ and choose $\mu$ so that 
\eq
\label{k-condition}
\mu_c + \left(k-\frac{1}{2}\right)\frac{\log n}{2\pi n} < \mu \leq \mu_c + \left(k+\frac{1}{2}\right)\frac{\log n}{2\pi n}.
\endeq
The continuous 
Hermite polynomials will play an important role in the analysis in this situation.  
\begin{theo}[Winding numbers in the Hermite regime]
\label{thm-hermite-winding}
Fix a return time $T\in(0,\pi^2)$ and a non-negative integer $k$, and choose $\mu$ 
satisfying \eqref{k-condition}.  Then
\eq
\label{hermite-winding}
\Prob(\mathcal{W}_n(T,\mu) = \om) = \begin{cases} 
\displaystyle \frac{F_{k-1}^{-1}}{1+F_k+F_{k-1}^{-1}} + \mathcal{O}\left(\frac{1}{n}\right), & \omega=k-1, \\
\displaystyle \frac{1}{1 + F_k + F_{k-1}^{-1}} + \mathcal{O}\left(\frac{1}{n}\right), & \omega=k, \\ 
\displaystyle \frac{F_k}{1 + F_k + F_{k-1}^{-1}} + \mathcal{O}\left(\frac{1}{n}\right), & \omega=k+1, \vspace{.1in} \\ 
\displaystyle \mathcal{O}\left(\frac{1}{n}\right), & \text{otherwise}, \end{cases}
\endeq
where
\eq
\label{Fk-scalar-def}
F_j\equiv F_j(T,\mu,n) := \begin{cases} \displaystyle \frac{j!}{2^{j+1}\sqrt{\pi}} \left(\frac{T}{(2\pi)^{3/2}(\pi^2-T)^{1/4}}\right)^{2j+1}\frac{e^{2\pi n(\mu-\mu_c(T))}}{n^{j+(1/2)}}, & j\in\mathbb{N}, \\ 0, & j=-1.
\end{cases}
\endeq
Here $F_{k-1}=o(1)$ except at 
$\mu=\mu_c+(k-\frac{1}{2})\frac{\log n}{2\pi n}$, and $F_k=o(1)$ except at 
$\mu=\mu_c+(k+\frac{1}{2})\frac{\log n}{2\pi n}$.  
\end{theo}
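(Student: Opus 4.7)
The plan is to derive the winding-number probabilities directly from the Fourier representation \eqref{eq:total_offset_formula}, which reduces the task to obtaining $\tau$-uniform asymptotics of the ratio $\Hankel_n(T,\mu,\tau)/\Hankel_n(T,\mu,\hsgn{n})$ throughout the logarithmic window \eqref{k-condition}. Via \eqref{Hn-ito-hn} this in turn reduces to the asymptotic behavior of the normalizing constants $h_{n,j}^{(T,\mu,\tau)}$ defined in \eqref{eq:defn_of_discrete_Gaussian_OP}. In the strictly subcritical setting (cf.\ Theorem \ref{thm-subcrit-winding}) the $\tau$-dependence of each $h_{n,j}^{(T,\mu,\tau)}$ is exponentially suppressed, but as $\mu$ enters the Hermite window a secondary band of zeros of $p_{n,j}^{(T,\mu,\tau)}$ begins to emerge, and this band affects a bounded number of the top normalizing constants in a genuinely $\tau$-dependent way; capturing this effect is the heart of the proof.

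The technical engine is the Deift--Zhou steepest-descent analysis of the RHP for $p_{n,j}^{(T,\mu,\tau)}$ developed elsewhere in the paper. I would begin from the subcritical equilibrium measure and track a saddle of the $g$-function at a distinguished point $z_\star$ as $\mu$ crosses $\mu_c$. In the Hermite window the saddle sits an $\bigO(n^{-1/2})$-distance above the endpoint of the main band, and the discrete weight rescaled by $n^{-1/2}$ about $z_\star$ limits to a Gaussian on $\Z+\tau$; the corresponding local RHP is then solved exactly by the continuous Hermite polynomials, and the local parametrix is constructed from Hermite functions. The data $F_j$ of \eqref{Fk-scalar-def} arises naturally from this parametrix as the combination of the Hermite normalization $j!/(2^{j+1}\sqrt{\pi})$, the Jacobian factor $(T/((2\pi)^{3/2}(\pi^2-T)^{1/4}))^{2j+1}$ from the local rescaling, and the band-separation $e^{2\pi n(\mu-\mu_c)}/n^{j+1/2}$ coming from the evaluation of the $g$-function at $z_\star$.

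A small-norm RHP argument combined with \eqref{Hn-ito-hn} should then collapse the Hankel ratio to a short trigonometric polynomial. In the window \eqref{k-condition} the relative sizes of the $F_j$ isolate just the three consecutive indices $k-1,k,k+1$ as contributing at leading order, giving
\begin{equation}
\frac{\Hankel_n(T,\mu,\tau)}{\Hankel_n(T,\mu,\hsgn{n})}=\frac{F_{k-1}^{-1}e^{2\pi i(k-1)(\tau-\hsgn{n})}+e^{2\pi ik(\tau-\hsgn{n})}+F_ke^{2\pi i(k+1)(\tau-\hsgn{n})}}{1+F_k+F_{k-1}^{-1}}+\bigO(n^{-1}),
\end{equation}
uniformly on $\tau\in[0,1]$. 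Substituting into \eqref{eq:total_offset_formula} and computing the three nonzero Fourier coefficients in $\tau$ yields the claimed probabilities at $\omega=k-1,k,k+1$ in \eqref{hermite-winding}, while all other $\omega$ inherit only the $\bigO(n^{-1})$ error from the uniform remainder.

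The principal obstacle is the local Hermite analysis itself. Because the weight $e^{-(Tn/2)(x^2-2i\mu x)}$ is genuinely complex, the sign-of-real-part arguments that underpin conventional discrete-OP steepest descent must be reworked: the lens contours and the equilibrium measure both deform with $\mu$ and $\tau$ near the emerging band, and one must verify that a single consistent steepest-descent configuration controls the entire window $\tau\in[0,1]$ uniformly. A secondary difficulty is confirming that the Hermite local parametrix is the sole source of non-exponentially-small $\tau$-dependence, so that no stray zeros condense between the two bands and the outer parametrix contributes only the exponentially small $\tau$-dependence already present in the subcritical analysis.
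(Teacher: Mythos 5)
There is a genuine gap in the reduction step. You propose to obtain the $\tau$-uniform asymptotics of $\Hankel_n(T,\mu,\tau)/\Hankel_n(T,\mu,\hsgn{n})$ from \eqref{Hn-ito-hn} by estimating all the normalizing constants $h^{(T,\mu,\tau)}_{n,j}$, arguing that only a bounded number of the top ones are genuinely $\tau$-dependent. As stated this does not work: the ratio is a product of $n$ factors, so to reach a relative error $\bigO(n^{-1})$ you would need each factor (or the sum of the individual errors) controlled far better than the $\bigO(n^{-1})$ accuracy the steepest-descent analysis delivers, and the claim that only the top few $h_{n,j}$ carry $\tau$-dependence would itself require a uniform-in-degree RH analysis (the equilibrium problem for degree $j$ has mass $j/n$, so the critical drift shifts with $j$), which is a much larger undertaking than anything you sketch. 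The paper avoids this entirely with a differential identity in $\tau$: $\frac{\d}{\d\tau}\log\Hankel_n(T,\mu,\tau)=inT\mu+Tc^{(T,\mu,\tau)}_{n,n,n-1}$ (the sum over degrees telescopes via the three-term recurrence), so only the subleading coefficient $c^{(T,\mu,\tau)}_{n,n,n-1}=[\mathbf P_{n,1}]_{11}$ of the single degree-$n$ problem is needed, and integrating in $\tau$ then yields exactly the three-mode trigonometric polynomial you guessed for the Hankel ratio. Without this identity (or an equivalent substitute) your route to that displayed formula is not justified.

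There are also structural gaps in the RH analysis you outline. The new zeros do not emerge at an $\bigO(n^{-1/2})$ neighborhood of a band endpoint: the Hermite parametrix is installed at the point $\widetilde{z}_c(\mu,T)$ of \eqref{zc-def}, which lies on the contour $\realR-i\delta$ at a fixed distance from the band $(a,b)$, where the phase $\phi$ of \eqref{eq:def-phi} has a quadratic critical point with $\Re\phi\approx 2\pi(\mu-\mu_c)$; the discreteness enters only through the factor $e^{2\pi i\tau}$ (and a parity sign), not through a limiting discrete Gaussian on $\Z+\tau$, so the local model is the continuous Fokas--Its--Kitaev problem. More importantly, a plain small-norm argument fails precisely in the regime where the theorem is interesting: near the edges of the window \eqref{k-condition} the jump on $\partial\mathbb{D}_{\widetilde{z}_c}$ is $\bigO(1)$ (this is when $F_k$ or $F_{k-1}^{-1}$ is of order one). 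The paper handles this by (i) modifying the outer parametrix to ${\bf M}_k^{(\rm out)}$ with a pole of order $k$ at $\widetilde{z}_c$ so that the local Hermite parametrix can be matched, and (ii) introducing an explicit rational ``parametrix for the error'' ${\bf Y}_n^{(k)}$ that absorbs the non-small part of the jump before the final small-norm step. Your proposal flags uniformity as a concern but supplies neither of these devices, and without them the asymptotics, and hence the coefficients $F_{k-1}^{-1}$ and $F_k$ in \eqref{hermite-winding}, cannot be obtained uniformly across the window.
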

Theorem \ref{thm-hermite-winding} is proved in \S\ref{subsec-hermite-winding}.

\begin{rmk}
In addition to information about the winding number, it is also natural to ask about the asymptotic distribution of particles in \NIBMT\ . Using the kernel \eqref{kernel}, one can approach global questions about the asymptotic distribution of particles at any fixed time or the limiting shape of the complex hull of the paths in space-time, as well as local questions about the statistics of particles in various scaling limits. For $\mu=0$, these questions were answered in \cite{LiechtyW:2014, LiechtyW:2016} by rewriting \eqref{kernel} as a double contour integral, using the asymptotic expansion of the orthogonal polynomials and their discrete Cauchy transforms obtained from Riemann--Hilbert analysis, and performing classical steepest-descent analysis on the double contour integrals. This approach should also apply for general $\mu$, and for $T<\pi^2$ and $|\mu|$ small enough, the approach of \cite{LiechtyW:2016} can be followed exactly to obtain identical results. Namely, the paths collect inside an ellipse in space-time, at any fixed time $t$ the density of particles converges to a semi-circle, and there are local scaling limits to the sine process in the bulk and to the Airy process at the edge. However, for certain values of $\mu<\mu_c(T)$, we have  a technical difficulty in carrying out this program. The reason is that, although the asymptotic formulas for the orthogonal polynomials $p^{(T,\mu,\tau)}_{n, n}(z)$ are simple, see Theorem \ref{thm:polynomials_asymptotics_sub} below, the asymptotic formulas for their discrete Cauchy transforms become more complicated in certain regions of the complex plane.
\end{rmk}

\subsection{Discrete orthogonal polynomials with complex weight}
We prove our results on winding numbers by asymptotically analyzing the 
(meromorphic) Riemann--Hilbert problem associated to the discrete orthogonal 
polynomials \eqref{eq:defn_of_discrete_Gaussian_OP}.  Orthogonal polynomials are 
a powerful tool for asymptotic analysis of nonintersecting particle systems (see 
the survey article \cite{Konig:2005} for an overview).  A non-exhaustive list 
of applications includes the analysis of nonintersecting random walks by 
orthogonal polynomials on the unit circle \cite{Baik:2000}, discrete Krawtchouk 
and Charlier polynomials \cite{KonigOR:2002}, and Stieltjes--Wigert polynomials 
\cite{BaikS:2007}; nonintersecting Brownian motions on the line with multiple 
starting and ending points by multiple orthogonal polynomials \cite{DaemsK:2007};
nonintersecting Brownian bridges on the line with distinct drifts by biorthogonal 
Stieltjes--Wigert polynomials \cite{TakahashiK:2012}; and widths of 
nonintersecting Brownian bridges by Hermite polynomials and their discrete Gaussian counterparts \cite{BaikL:2014}.

Discrete Gaussian orthogonal polynomials with a real weight are the main tool in the 
analysis of the zero-drift version of the process \NIBMT\ \cite{LiechtyW:2016}.  
Discrete orthogonal polynomials display some remarkably different properties from 
their continuous analogues.  One well-known example for real weights is  
\emph{saturation of zeros}, which follows from the upper limit on the 
density of zeros due to the fact that there can be at most one zero between any 
two lattice points on which the weight is supported (see, e.g., 
\cite{BaikKMM:2007,Liechty:2012,LiechtyW:2016}).  Saturation occurs in the 
zero-drift case for $T>\pi^2$ and corresponds to the two edges of the sea of 
walkers touching at $\theta=\pi$ (see the third panel in Figure 
\ref{random-walk-plots}).  Saturation also plays a role for $T>\pi^2$ when the 
drift is non-zero, although that is beyond the scope of this paper.  Instead, we 
describe and analyze a new band formation phenomenon.  For sufficiently small 
drift values the locus of accumulating zeros is a single band, but at the
critical value $\mu_c(T)$ zeros also appear at a new point (each new zero 
corresponds to an increase in the expected winding number).  Similar behavior 
arises in the so-called \emph{birth of a cut} 
for random matrix eigenvalues, which has been studied using continuous orthogonal 
polynomials with a real but non-convex weight 
\cite{Claeys:2008,BertolaL:2009,Mo:2008}.  In the Riemann--Hilbert analysis it 
is necessary to insert a local parametrix built from continuous Hermite 
polynomials and modify the outer parametrix.  This construction also arises 
for the solitonic edge of an oscillatory zone in the solution of the 
small-dispersion Korteweg--de Vries equation \cite{ClaeysG:2010}, for a 
librational-rotational transition region in the solution of the small-dispersion 
sine-Gordon equation \cite{BuckinghamM:2012}, and at the edge of the pole region 
for rational Painlev\'e-II functions \cite{BuckinghamM:2015}.

This new phenomenon arises from the particular combination of a complex weight 
and discrete orthogonal polynomials.  No band opens at a single point in the 
zero-drift problem analyzed by discrete Gaussian orthogonal polynomials with a real weight 
\cite{LiechtyW:2016}.  To see that this behavior also depends on the fact that 
we are using discrete orthogonal polynomials, 
consider the continuous analogue of \eqref{eq:defn_of_discrete_Gaussian_OP}. That is, let $H_{n,k}^{(T,\mu)}(x)$ be the monic polynomial of degree $k$ satisfying 
\eq\label{def:continuous_OPs}
\int_\realR H_{n,k}^{(T,\mu)}(x)H_{n,j}^{(T,\mu)}(x)e^{-\frac{nT}{2}(x^2-2i\mu x)}dx = h_{n,k}^{(T,\mu)} \de_{jk} ,
\endeq
for a sequence $\{h_{n,k}^{(T,\mu)}\}_{k=0}^\infty$ of normalizing constants. For $\mu=0$ these are the rescaled monic versions of the classical Hermite polynomials. For general $\mu$, we can complete the square in the exponent to obtain
\eq
\int_{\realR} H_{n,k}^{(T,\mu)}(x)H_{n,j}^{(T,\mu)}(x)e^{-\frac{nT}{2}(x-i\mu)^2}dx = e^{nT\mu^2/2}h_{n,k}^{(T,\mu)} \de_{jk} ,
\endeq
or equivalently
\eq
\int_{\realR-i\mu} H_{n,k}^{(T,\mu)}(x+i\mu)H_{n,j}^{(T,\mu)}(x+i\mu)e^{-\frac{nT}{2}x^2}dx = e^{nT\mu^2/2}h_{n,k}^{(T,\mu)} \de_{jk} .
\endeq
Using Cauchy's theorem, the contour of integration can be deformed back to the real line, giving the orthogonality condition for $H_{n,k}^{(T,0)}(x)$. We thus find that the orthogonal polynomials \eqref{def:continuous_OPs} with complex weight are related to the ones with real weight ($\mu=0$) simply by a shift:
\eq
H_{n,k}^{(T,\mu)}(x) \equiv H_{n,k}^{(T,0)}(x-i\mu).
\endeq
Therefore the complexified weight cannot lead to new behavior for the continuous 
Hermite polynomials.  This argument relies on a deformation of the contour of 
integration, which clearly is not possible for the discrete orthogonal 
polynomials \eqref{eq:defn_of_discrete_Gaussian_OP}.  For discrete orthogonal 
polynomials with the complexified weight we will show in Theorem 
\ref{thm:polynomials_asymptotics_sub} an analogous relation 
holds, but only for sufficiently small drift (see \eqref{pnn-pnn-Hnn}).  The asymptotics of 
$p^{(T,\mu,\tau)}_{n,n}(z)$ near the critical drift are given in Theorem 
\ref{thm:polynomials_asymptotics_crit}.  

%It is natural to ask then if there is a similar relation between $p^{(T,\mu,\tau)}_{n, k}(x)$ and $p^{(T,0,\tau)}_{n, k}(x)$. It is known that for $T<\pi^2$, the discrete orthogonal polynomials $p^{(T,0,\tau)}_{n, n}(z)$ have the same leading-term asymptotics  as their continuous counterparts $H_{n,n}^{(T,0)}(z)$ as $n\to\infty$,
%For $T>\pi^2$, the asymptotics of the discrete orthogonal polynomials become more complicated due to the phenomenon of saturation of the zeros. We find something similar for $p^{(T,\mu,\tau)}_{n, n}(z)$ for nonzero $\mu$, provided that $|\mu|$ is not too big. However, for each $0<T<\pi^2$, there is a critical value of $\mu$, which we denote $\mu_c(T)$, such that this simple asymptotic relationship fails for $\mu\ge \mu_c(T)$. 

The current work is, to the best of our knowledge, the first investigation 
of discrete orthogonal polynomials with a complex weight.  However, there are 
examples of asymptotic studies of continuous orthogonal polynomials with complex 
weights, including a weight supported on a real compact interval \cite{Deano2014}
and on arcs in the complex plane \cite{HuybrechsKL:2014}.  In both cases band 
splitting was studied but the birth of a new band at a point was not observed.  
A case where a new band appears at a point is the analysis of orthogonal polynomials on the 
unit circle to study rational Painlev\'e-II solutions \cite{BertolaB:2015}.

Below we describe the asymptotic behavior of the polynomials $p^{(T,\mu,\tau)}_{n,n}(z)$ for $0<T<\pi^2$ as $n\to\infty$ in various regions of the complex plane separately in the cases $\mu<\mu_c(T)$ and $\mu\approx\mu_c(T)$. In this paper we do not consider non-scaling regimes when $\mu>\mu_c(T)$, which are considerably more complicated and can involve elliptic functions. We first introduce a few notations. Set  
\eq\label{eq:LWg-definition}
g_0(z):=\frac{T}{4}z\left(z-\sqrt{z^2-\frac{4}{T}}\right)-\log\left(z-\sqrt{z^2-\frac{4}{T}}\right) - \frac{1}{2} + \log\frac{2}{T}, \qquad \ell_0 := -1-\log T,
\endeq
where the principal branches of the logarithm and square root are chosen.  
These are the $g$-function and Lagrange multiplier associated to the rescaled continuous Hermite 
polynomials \cite{DeiftKMVZ:1999} and are also used for the Brownian bridge on 
the circle when $\mu=0$ and $0<T<\pi^2$ \cite{LiechtyW:2016}.  
%We also introduce the zero-drift Lagrange multiplier
%\eq\label{eq:LWl-definition}
%l_0 := -1-\log T.
%\endeq
Now we define the $g$-function and the Lagrange multiplier for nonzero $\mu$ 
as
\eq\label{eq:def-g-function}
g(z):=g_0(z-i\mu), \quad \ell := \ell_0 -\frac{T}{2} \mu^2.
\endeq
Define the function $\ga(z)$ to be 
\eq
\label{gamma-def}
\gamma(z):=\left(\frac{z-a}{z-b}\right)^{1/4}, \quad \textrm{where} \quad a:=-\frac{2}{\sqrt{T}}+i\mu, \quad b:=\frac{2}{\sqrt{T}}+i\mu,
\endeq
%
%\eq
%\label{gamma-def}
%\gamma(z):=\left(\frac{z-a}{z-b}\right)^{1/4},
%\endeq
with a cut on the horizontal line segment $(a,b)$, taking the branch such that $\ga(\infty) = 1$. Since both $g(z)$ and $\ga(z)$ have  cuts on $(a,b)$, for $z\in(a,b)$ we define $g_\pm(z)$ and $\ga_\pm(z)$ to be the limiting values of these functions as $z$ approaches $(a,b)$ from above (respectively, below) the 
interval.

\begin{theo}[Orthogonal polynomial asymptotics in the subcritical regime]
\label{thm:polynomials_asymptotics_sub}
Let $T\in (0,\pi^2)$ be fixed, and $\mu\in \realR$ with $|\mu|<\mu_c(T)$, where $\mu_c(T)$ is defined in \eqref{mu-crit}.
\begin{enumerate}%[label=(\alph*)]
\item[(a)] Let $z$ be bounded away from the band $(a,b)$. As $n\to\infty$, the polynomial $p^{(T,\mu,\tau)}_{n, n}(z)$ satisfies
\eq
\label{sub-crit_asymptotics_outer}
p^{(T,\mu,\tau)}_{n, n}(z) = e^{ng(z)}\left(\frac{\ga(z)+\ga(z)^{-1}}{2}\right)\left(1+\bigO(n^{-1})\right),
\endeq
and the error is uniform on subsets of $\C$ that are bounded away from the interval $(a,b)$.
\item[(b)] Let $z$ be in a compact subset of the band $(a,b)$. As $n\to\infty$, the polynomial $p^{(T,\mu,\tau)}_{n, n}(z)$ satisfies
\eq
\label{sub-crit_asymptotics_band}
p^{(T,\mu,\tau)}_{n, n}(z) =\left[ e^{ng_+(z)}\left(\frac{\ga_+(z)+\ga_+(z)^{-1}}{2}\right)+e^{ng_-(z)}\left(\frac{\ga_-(z)+\ga_-(z)^{-1}}{2}\right)\right]\left(1+\bigO(n^{-1})\right),
\endeq
and the error is uniform on compact subsets of $(a,b)$. 
\end{enumerate}
\end{theo}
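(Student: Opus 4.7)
The plan is to apply the Deift--Zhou nonlinear steepest descent method to the interpolation Riemann--Hilbert problem (IRHP) encoding the discrete orthogonal polynomials in \eqref{eq:defn_of_discrete_Gaussian_OP}, following the framework of \cite{BaikKMM:2007} but adapted to a complex-valued weight. The IRHP is for a $2\times 2$ matrix $Y(z)$ which is meromorphic on $\C$ with simple poles at the nodes $x\in\lattice$ whose residues encode the discrete orthogonality, has normalized asymptotics $Y(z) = (I + \bigO(1/z))\, z^{n\sigma_3}$ as $z\to\infty$, and satisfies $p_{n,n}^{(T,\mu,\tau)}(z) = Y_{11}(z)$.

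First I would remove the poles by multiplying by a suitable contour-factored product over the nodes, obtaining an equivalent RHP with jumps across a contour $\Sg$ enclosing the real axis. Next I would normalize at infinity using the $g$-function defined in \eqref{eq:def-g-function}, which is precisely the continuous-Hermite $g$-function shifted by $i\mu$; this choice is natural in the subcritical regime because, as indicated in the remark preceding the theorem, the polynomials should behave essentially like a shift of the continuous Hermite polynomials when $|\mu|<\mu_c(T)$. I would then perform the standard opening of lenses about the band $(a,b)$ at height $i\mu$, splitting the oscillatory jump into upper- and lower-triangular factors.

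The two variational conditions that must be checked are: (i) the equilibrium relation $g_+(z) + g_-(z) - \frac{T}{2}(z^2 - 2i\mu z) - \ell = 0$ on the band $(a,b)$, which follows immediately from \eqref{eq:def-g-function} and the analogous identity for $g_0$; and (ii) the strict inequality $\mathrm{Re}\bigl[2g(z) - \frac{T}{2}(z^2 - 2i\mu z) - \ell\bigr] < 0$ along $\Sg$ away from the band endpoints, which makes the off-diagonal factors on $\Sg$ uniformly exponentially small. Condition (ii) must hold on a contour that passes close to the real line (where the IRHP poles live) while picking up the band at height $i\mu$; the threshold at which (ii) first becomes marginal at a point distinct from $a,b$ is exactly $|\mu|=\mu_c(T)$, and this is how the explicit formula \eqref{mu-crit} is derived.

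Finally I would construct the outer parametrix from $\ga(z)$ in \eqref{gamma-def}, yielding a model matrix built from $(\ga\pm\ga^{-1})/2$ and $(\ga\mp\ga^{-1})/(2i)$; glue in Airy local parametrices at the endpoints $a,b$ in the usual way; and run a small-norm RHP argument for the residual error matrix $E(z)$, whose jump matrix is $I+\bigO(n^{-1})$ uniformly on its contour. Unravelling the chain of transformations and reading off the $(1,1)$ entry yields \eqref{sub-crit_asymptotics_outer} outside the band and, upon combining the two boundary values produced by the opening of lenses, yields \eqref{sub-crit_asymptotics_band} on compact subsets of $(a,b)$. The hard part will be the verification of condition (ii) for all $|\mu|<\mu_c(T)$: because the IRHP poles sit on $\realR$ while the band sits at height $i\mu$, the controlling exponent must be negative on a contour that cannot be moved arbitrarily far from the poles, so one is forced to analyze the real part of $2g - \tfrac{T}{2}(z^2-2i\mu z) - \ell$ along a specific deformation, and it is precisely the location where its minimum crosses zero that produces the critical drift \eqref{mu-crit}.
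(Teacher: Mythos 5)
Your overall architecture matches the paper's proof: interpolate the poles of the discrete RHP, use the shifted Hermite $g$-function \eqref{eq:def-g-function}, open lenses about the band at height $i\mu$, build the outer parametrix from $\gamma(z)$, insert Airy parametrices at $a,b$, and close with a small-norm argument, reading off the $(1,1)$ entry (and, on the band, the two boundary-value contributions) exactly as in \S\ref{subsec-op-subcrit}. However, your key decay condition (ii) is stated incorrectly, and as stated it both fails and cannot produce the restriction $|\mu|<\mu_c(T)$. After removing the poles with the triangular factors involving $1/\Pi(z)$, $\Pi(z)=\sin(n\pi z-\tau\pi)/(n\pi)$ (see \eqref{red1}), the jump exponents on the contour components away from the lattice are \emph{not} $2g-V-\ell$ but $2g-V-\ell\mp 2\pi i z$: the interpolating factor contributes $\mp 2\pi i n z$ asymptotically. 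On the lower line $\realR-i\delta$ the controlling exponent is the paper's $\phi(z)=2g(z)-V(z)-\ell-2\pi i z$ of \eqref{eq:def-phi}, with $\delta$ chosen as in \eqref{eq:def-delta}, and Lemma \ref{lem:horizontal-line} shows $\Re\phi<0$ there precisely when $\mu<\mu_c(T)$; on the upper lens boundary the needed inequality is $\Re[G(z)+2\pi i z]<0$, which is condition \eqref{epsilon-condition2} and uses the subcriticality $T<\pi^2$ through the density being strictly less than $1$.

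With your exponent $2g-V-\ell$ the verification would fail for every $\mu$: since $2g(z)-V(z)-\ell=2g_0(z-i\mu)-\tfrac{T}{2}(z-i\mu)^2-\ell_0$, its real part vanishes on the band and \emph{increases} as one moves off the band in the imaginary direction (the standard behavior of the effective potential off the support), so it is positive on any horizontal contour passing below the real axis near the band center. Moreover, this quantity depends on $z$ only through $z-i\mu$, so its restriction to such contours carries no $\mu$-dependence of the kind needed; the drift enters the criticality only through the unshifted term $-2\pi i z$ coming from the discreteness of the lattice, which is exactly why \eqref{mu-crit} contains the logarithmic terms and why no analogous phenomenon occurs for the continuous (shifted Hermite) weight. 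So your claim that the threshold where your condition (ii) becomes marginal is $|\mu|=\mu_c(T)$, and that \eqref{mu-crit} is derived from $\Re[2g-V-\ell]$, is the genuine gap: the proof goes through only once the interpolation factors' exponential contribution is included in the exponent, as in the paper's Lemma \ref{lem:horizontal-line} and Proposition \ref{prop-S-jump-bounds}.
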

Theorem \ref{thm:polynomials_asymptotics_sub} is proved in 
\S\ref{subsec-op-subcrit}.  

\begin{rmk}
Notice that the functions involved in Theorem 
\ref{thm:polynomials_asymptotics_sub} are the same ones involved in the asymptotics of $p^{(T,\mu,\tau)}_{n, n}(z)$ with $\mu=0$, but shifted by $i\mu$. Indeed, this theorem implies that as $n\to\infty$,
\eq
\label{pnn-pnn-Hnn}
 p^{(T,\mu,\tau)}_{n, n}(z)={p^{(T,0,\tau)}_{n, n}(z-i\mu)}(1+\bigO(n^{-1})) = H_{n,n}^{(T,\mu)}(z)(1+\bigO(n^{-1})),
\endeq
provided $0<T<\pi^2$ and $|\mu|<\mu_c$. In Theorem 
\ref{thm:polynomials_asymptotics_crit}, we see that this no longer holds when 
$\mu$ is scaled close to $\mu_c$. 
\end{rmk}
We introduce the function
\eq\label{def:D}
D(z;\mu,T):=\frac{\sqrt{T}}{2i}(z-i\mu+R(z;\mu,T)),
\endeq
where 
\eq\label{def:R}
R(z;\mu,T):=((z-a(\mu,T))(z-b(\mu,T)))^{1/2}
\endeq
is chosen with branch cut $[a,b]$ so that $R(z)=z+\mathcal{O}(1)$ as 
$z\to\infty$.   Also introduce the point on the imaginary axis
\eq
\label{zc-def}
\widetilde{z}_c(\mu,T):=i\mu - \frac{2i}{T}\sqrt{\pi^2-T},
\endeq
the constant
\eq\label{def:alpha_infty}
\alpha_\infty:=\frac{-i}{D(\widetilde{z}_c)} = \frac{\sqrt{\pi^2-T}+\pi}{\sqrt{T}}i,
\endeq 
and the function
\eq\label{def:alpha}
\alpha(z;\mu,T) := i\frac{D(\widetilde{z}_c;\mu,T) - D(z;\mu,T)}{1+D(\widetilde{z}_c;\mu,T)D(z;\mu,T)}.
\endeq
Finally, let $\mathfrak{h}_k(\zeta)$ be the (orthonormal) Hermite polynomial 
of degree $k$, where $k$ is a non-negative integer.  The Hermite polynomials 
satisfy 
\eq
\label{cont-hermite-def}
\int_\mathbb{R}\mathfrak{h}_j(\zeta)\mathfrak{h}_k(\zeta)e^{-z^2}d\zeta = \delta_{jk},
\endeq
and $\mathfrak{h}_0(\zeta)\equiv 1$.  Denote the leading coefficient by 
$\kappa_k$:  
\eq
\label{eq:hk}
\mathfrak{h}_k(\zeta) = \kappa_k\zeta^k + \mathcal{O}(\zeta^{k-1}), \quad \kappa_k := \frac{2^{k/2}}{\pi^{1/4}\sqrt{k!}}.
\endeq

\begin{theo}[Orthogonal polynomials in the Hermite regime]
\label{thm:polynomials_asymptotics_crit}
Fix $T\in(0,\pi^2)$ and a non-negative integer $k$, and choose $\mu$ satisfying 
\eqref{k-condition}.  
\begin{enumerate}%[label=(\alph*)]
\item[(a)] Let $z$ be bounded away from the band $(a,b)$ and the point $\widetilde{z}_c(\mu,T)$. As $n\to\infty$, the polynomial $p^{(T,\mu,\tau)}_{n, n}(z)$ satisfies
\eq
\label{hermite_asymptotics_outer}
p^{(T,\mu,\tau)}_{n, n}(z) = e^{ng(z)}\left(\frac{\alpha(z)}{\alpha_\infty}\right)^k\left(\frac{\ga(z)+\ga(z)^{-1}}{2}\right)\left(1 + \mathcal{O}\left(\frac{e^{2\pi n(\mu-\mu_c)}}{n^{k+\frac{1}{2}}}\right) + \mathcal{O}\left(\frac{n^{k-\frac{1}{2}}}{e^{2\pi n(\mu-\mu_c)}} \right) \right),
\endeq
and the error is uniform on subsets of $\C$ which are bounded away from the interval $(a,b)$ and the point $\widetilde{z}_c(\mu,T)$.
\item[(b)] Let $z$ be in a compact subset of the band $(a,b)$. As $n\to\infty$, the polynomial $p^{(T,\mu,\tau)}_{n, n}(z)$ satisfies
\eq
\label{hermite_asymptotics_band}
\begin{split}
p^{(T,\mu,\tau)}_{n, n}(z) = & \left[\frac{\alpha_+(z)^k}{\alpha_\infty^k} e^{ng_+(z)}\left(\frac{\ga_+(z)+\ga_+(z)^{-1}}{2}\right)+\frac{\alpha_-(z)^k}{\alpha_\infty^k}e^{ng_-(z)}\left(\frac{\ga_-(z)+\ga_-(z)^{-1}}{2}\right)\right] \\ 
  & \times \left(1 + \mathcal{O}\left(\frac{e^{2\pi n(\mu-\mu_c)}}{n^{k+\frac{1}{2}}}\right) + \mathcal{O}\left(\frac{n^{k-\frac{1}{2}}}{e^{2\pi n(\mu-\mu_c)}} \right) \right),
\end{split}
\endeq
and the error is uniform on compact subsets of $(a,b)$. 
\item[(c)] There is a neighborhood $\mathbb{D}_{\widetilde{z}_c}$ of 
$\widetilde{z}_c$ and a local coordinate $W(z)$ (defined in \eqref{phi-to-W}) 
such that, for $z\in\mathbb{D}_{\widetilde{z}_c}$,
\eq
\label{hermite_asymptotics_zc}
\begin{split}
p^{(T,\mu,\tau)}_{n, n}(z) = & e^{ng(z)}\left(\frac{\alpha(z)}{\alpha_\infty W(z)}\right)^k\left(\frac{\gamma(z)+\gamma(z)^{-1}}{2}\right)\frac{\mathfrak{h}_k(n^{1/2}W(z))}{\kappa_k n^{k/2}} \\
  & \times \left(1 + \mathcal{O}\left(\frac{e^{2\pi n(\mu-\mu_c)}}{n^{k+\frac{1}{2}}}\right) + \mathcal{O}\left(\frac{n^{k-\frac{1}{2}}}{e^{2\pi n(\mu-\mu_c)}} \right) \right).
\end{split}
\endeq
\end{enumerate}
\end{theo}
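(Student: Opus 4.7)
The plan is to perform Deift--Zhou steepest descent on the meromorphic Riemann--Hilbert (RH) problem satisfied by $p^{(T,\mu,\tau)}_{n,n}$ and its discrete Cauchy transform, adapting the subcritical construction that underlies Theorem \ref{thm:polynomials_asymptotics_sub} by inserting a new local parametrix at $\widetilde{z}_c$ built from continuous Hermite polynomials, with a correspondingly modified outer parametrix carrying a zero of order $k$ at $\widetilde{z}_c$.

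First I would carry out the initial reductions as in the subcritical proof: convert the interpolation problem on $\lattice$ to a continuous RH problem, apply the $g$-function transformation with $g$ and $\ell$ from \eqref{eq:def-g-function}, and open lenses around the band $(a,b)$. The variational inequalities characterizing $(a,b)$ as the band in the subcritical case remain valid in the Hermite regime \eqref{k-condition}, so the same global contour deformation succeeds. The crucial difference is that in the off-band jump a factor of approximate size $e^{2\pi n(\mu-\mu_c)}$ emerges near $\widetilde{z}_c$: it is exponentially small when $\mu<\mu_c$ (thereby recovering Theorem \ref{thm:polynomials_asymptotics_sub}) but of size $n^{k\pm 1/2}$ in the regime \eqref{k-condition}, so that the small-norm argument fails there and a local parametrix is required.

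Next I would construct a modified outer parametrix with a zero of order $k$ at $\widetilde{z}_c$, obtained from the subcritical outer parametrix by incorporating the factor $(\alpha(z)/\alpha_\infty)^k$. The identity $D_+(z)D_-(z)=-1$ on $(a,b)$, which follows by direct computation from \eqref{def:D} and the explicit form of $R(z)$, yields $\alpha_+(z)\alpha_-(z)=1$ on the band, so that $(\alpha/\alpha_\infty)^k$ acts as a Blaschke-type factor with appropriate boundary values; together with $\alpha(\infty)=\alpha_\infty$ this ensures that the modified parametrix normalizes to the identity at infinity, preserves the jump structure of the subcritical parametrix, and creates room for $k$ zeros of $p^{(T,\mu,\tau)}_{n,n}$ to accumulate near $\widetilde{z}_c$. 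Inside a disk $\mathbb{D}_{\widetilde{z}_c}$ I would replace this outer parametrix by a Hermite parametrix. The local coordinate $W(z)$ is fixed by requiring that the exponent of the residual off-band jump equal $-nW(z)^2$ to leading order; the parametrix is then built from $\mathfrak{h}_k(n^{1/2}W(z))$ and its Cauchy transform, solving the model RH problem associated to the weight $e^{-\zeta^2}$ on $\realR$.

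The principal obstacle is the matching on $\d\mathbb{D}_{\widetilde{z}_c}$. Because $\mathfrak{h}_k(\zeta)=\kappa_k\zeta^k(1+\bigO(\zeta^{-2}))$ and the subleading $\mathfrak{h}_{k-1}$ component of the local model is present, the matching produces the leading factor $(\alpha(z)/(\alpha_\infty W(z)))^k$ appearing in \eqref{hermite_asymptotics_zc}, together with two residuals: one of relative size $e^{2\pi n(\mu-\mu_c)}/n^{k+1/2}$ (from the $\zeta^{-2}$ correction of the $\mathfrak{h}_k$ asymptotics, convolved with the pole-like factor of size $e^{2\pi n(\mu-\mu_c)}$ that drove the band birth) and one of relative size $n^{k-1/2}/e^{2\pi n(\mu-\mu_c)}$ (from the $\mathfrak{h}_{k-1}$ contribution, which dominates when the $\mathfrak{h}_k$ weight is small). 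These are precisely the two errors in \eqref{hermite_asymptotics_outer}--\eqref{hermite_asymptotics_zc}, and \eqref{k-condition} is the range in which they balance. Combined with standard Airy parametrices at $a$ and $b$ and small-norm RH theory for the global error matrix, undoing the $g$-function transformation and the lens openings yields parts (a), (b), and (c).
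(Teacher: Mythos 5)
Your outline follows the paper's own strategy almost step for step (interpolation of poles, the shifted $g$-function, lens opening, the modified outer parametrix ${\bf M}_k^{(\rm out)}=\alpha_\infty^{-k\sigma_3}{\bf M}_0^{(\rm out)}\alpha^{k\sigma_3}$, the conformal map $W$ from \eqref{phi-to-W}, and a Fokas--Its--Kitaev Hermite parametrix in $\mathbb{D}_{\widetilde{z}_c}$), but there is one genuine gap: you close the argument by applying ``small-norm RH theory for the global error matrix'' directly after inserting the local parametrix. That step fails on part of the range \eqref{k-condition}. As you yourself compute, the mismatch on $\partial\mathbb{D}_{\widetilde{z}_c}$ carries off-diagonal terms of sizes $e^{2\pi n(\mu-\mu_c)}/n^{k+1/2}$ and $n^{k-1/2}/e^{2\pi n(\mu-\mu_c)}$; at the endpoint $\mu=\mu_c+(k+\frac12)\frac{\log n}{2\pi n}$ (which is included in \eqref{k-condition}), and arbitrarily close to both endpoints, one of these is $\mathcal{O}(1)$, so the jump for the raw error matrix is not close to the identity and the small-norm machinery gives nothing --- not even the boundedness implicitly asserted by the multiplicative error in \eqref{hermite_asymptotics_outer}--\eqref{hermite_asymptotics_zc}.

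The paper bridges this by an extra layer you omit: a ``parametrix for the error'' ${\bf Y}_n^{(k)}$ (Riemann--Hilbert Problem \ref{rhp-error-parametrix}), solved explicitly in rational form ${\bf Y}_n^{(k)}(z)=\mathbb{I}+{\bf B}/(z-\widetilde{z}_c)$ with ${\bf B}$ given by \eqref{B-formula}, which removes exactly the possibly-large triangular part ${\bf Q}$ of the mismatch \eqref{Xherm-jump-expansion}; the remaining ratio ${\bf Z}_n^{(k)}={\bf X}_n^{(k)}({\bf Y}_n^{(k)})^{-1}$ then has jumps that are uniformly $\mathcal{O}(n^{-1/2})$ (see \eqref{VZ-bound}), and only at that stage does small-norm theory apply. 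The bounds \eqref{Z-bound} and \eqref{Y-bound} together produce precisely the two error terms in the theorem, uniformly over all of \eqref{k-condition}. So to complete your proof you must either restrict $\mu$ to the interior of the window, where both residuals are $o(1)$ (which proves less than the theorem), or add this explicit error-parametrix construction. A minor additional point: the term of size $e^{2\pi n(\mu-\mu_c)}/n^{k+1/2}$ comes from the order-$\zeta^{-1}$ off-diagonal entry $-1/(2\pi i\kappa_k^2)$ in the expansion \eqref{Hk-full-expansion}, i.e.\ from the Cauchy-transform column, not from a $\zeta^{-2}$ correction to $\mathfrak{h}_k$ itself; this does not affect the structure of the argument but matters when you actually verify the matching estimates.
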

Theorem \ref{thm:polynomials_asymptotics_crit} is proved in \S\ref{subsec-hermite-norms}.

\begin{rmk}
The complicated form of the error term arises from the fact that, although $\mu$ 
is a continuous parameter, the asymptotic behavior in the Hermite regime is 
naturally discretized in the sense that the choice of interval in 
\eqref{k-condition} determines the qualitative behavior.  This discretization is 
visually evident for the winding numbers in Figure \ref{winding-plots}, and for 
the orthogonal polynomials it manifests itself in the number of outlying zeros 
(see Remark \ref{rmk-zeros}).  Simply keeping the leading-order terms as we have 
done in Theorem \ref{thm:polynomials_asymptotics_crit} has the advantage of 
giving simpler formulas with more straightforward derivations.  However, it 
necessarily gives non-uniform errors due to the different behavior on different 
intervals.  In fact, the first error term is actually $\mathcal{O}(1)$ if 
$\mu = \mu_c + (k+\frac{1}{2})\frac{\log n}{2\pi n}$ and the second error term 
is $\mathcal{O}(1)$ if $\mu = \mu_c + (k-\frac{1}{2})\frac{\log n}{2\pi n}$.  
A uniform expression can be obtained if desired using the analysis in this paper 
by keeping additional terms, and we carry out this procedure completely in 
Theorems \ref{thm-hermite-winding} and \ref{thm-hermite-norms}.
\end{rmk}
\begin{rmk}
\label{rmk-zeros}
Equation \eqref{hermite_asymptotics_zc} shows that 
$p^{(T,\mu,\tau)}_{n, n}(z)$ asymptotically has $k$ zeros (possibly counting 
multiplicity) near $\widetilde{z}_c$ in the Hermite regime.  Indeed, 
$\alpha(z)$ and $W(z)$ both have simple zeros at $\widetilde{z}_c$ (see 
\eqref{alpha-on-W-at-zc}), so the term
$$e^{ng(z)}\left(\frac{\alpha(z)}{\alpha_\infty W(z)}\right)^k\left(\frac{\gamma(z)+\gamma(z)^{-1}}{2}\right)$$ 
is well-defined and non-zero in a neighborhood of $\widetilde{z}_c$.  
Furthermore, $\mathfrak{h}_k$ has exactly $k$ simple zeros on the real line, 
and $W$ is a conformal map in this neighborhood, so 
$\mathfrak{h}_k(n^{1/2}W(z))$ also has $k$ simple zeros in a neighborhood 
of $\widetilde{z}_c$ for $n$ large enough.
\end{rmk}

\begin{rmk}
In part (b) of Theorems \ref{thm:polynomials_asymptotics_sub} and \ref{thm:polynomials_asymptotics_crit} all of the functions involved in the asymptotic formulas have analytic extensions into a neighborhood of the band $(a,b)$, so these asymptotics can be extended to a neighborhood of compact subsets of $(a,b)$.
\end{rmk}

\begin{rmk}
Neither Theorem \ref{thm:polynomials_asymptotics_sub} nor \ref{thm:polynomials_asymptotics_crit} describe the asymptotics behavior of the orthogonal polynomials close to the band endpoints $a$ and $b$. Near these points the asymptotic behavior is described by Airy functions. We omit this analysis, which is standard, from the current paper. 
\end{rmk}

The orthogonal polynomials \eqref{eq:defn_of_discrete_Gaussian_OP} satisfy the three term recurrence equation (see \cite{Szego:1975})
\begin{equation} \label{eq:three_term_recurrence}
xp^{(T,\mu,\tau)}_{n, k}(x)=p^{(T,\mu,\tau)}_{n, k+1}(x)+\be^{(T,\mu,\tau)}_{n, k}p^{(T,\mu,\tau)}_{n, k}(x)+\left(\ga^{(T,\mu,\tau)}_{n, k}\right)^2p^{(T,\mu,\tau)}_{n, k-1}(x)\,,
\end{equation}
where $\{\be^{(T,\mu,\tau)}_{n, j}\}_{j=0}^\infty$ is a sequence of constants, and
\begin{equation} \label{eq:defn_of_gamma_nk}
\left(\ga^{(T,\mu,\tau)}_{n, j}\right)^2 := \frac{h^{(T,\mu,\tau)}_{n, j}}{h^{(T,\mu,\tau)}_{n, j-1}}.
\end{equation}
The asymptotic behavior of the recurrence coefficients and normalizing constants is presented in the last two theorems.

\begin{theo}[Normalizing constants and recurrence coefficients in the subcritical regime]
\label{thm-normalizations-subcrit}
Let $T\in (0,\pi^2)$ be fixed, and $\mu\in \realR$ with $|\mu|<\mu_c(T)$.
The normalizing constants and recurrence coefficients satisfy the following 
asymptotics as $n\to\infty$.
\eq	
\label{h-subcrit}
  h^{(T,\mu,\tau)}_{n, n} = \frac{2\pi }{T^{n+\frac{1}{2}}e^{n(\frac{1}{2}T\mu^2+1)}}(1+\bigO(n^{-1})), \qquad   \left(h^{(T,\mu,\tau)}_{n, n-1}\right)^{-1} = \frac{T^{n-\frac{1}{2}}e^{n(\frac{1}{2}T\mu^2+1)}}{2\pi}(1+\bigO(n^{-1})),
\endeq
\eq	
\be^{(T,\mu,\tau)}_{n, n-1} = i\mu+\bigO(n^{-1}), \qquad \left(\ga^{(T,\mu,\tau)}_{n, n}\right)^2 = \frac{1}{T}+\bigO(n^{-1}).
\endeq
\end{theo}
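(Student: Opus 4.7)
The plan is to read off the normalizing constants and recurrence coefficients directly from the large-$z$ expansion of the Riemann--Hilbert solution constructed to prove Theorem \ref{thm:polynomials_asymptotics_sub}. Recall that the discrete orthogonal polynomials \eqref{eq:defn_of_discrete_Gaussian_OP} are encoded by a $2\times 2$ meromorphic matrix $X(z)$, normalized so that $X(z)z^{-n\sigma_3} = I + X_1 z^{-1} + X_2 z^{-2} + \bigO(z^{-3})$ as $z\to\infty$. Elementary matrix-moment identities then give $h_{n,n}^{(T,\mu,\tau)} = -[X_1]_{12}$ and $\left(h_{n,n-1}^{(T,\mu,\tau)}\right)^{-1} = -[X_1]_{21}$, with the recurrence coefficient $\be_{n,n-1}^{(T,\mu,\tau)}$ extracted from $[X_1]_{22}$ (suitably corrected by the interpolation factors inherent to the discrete RH setup), while $\left(\ga_{n,n}^{(T,\mu,\tau)}\right)^2 = h_{n,n}^{(T,\mu,\tau)}/h_{n,n-1}^{(T,\mu,\tau)}$ follows automatically from \eqref{eq:defn_of_gamma_nk}.

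The asymptotic analysis developed for Theorem \ref{thm:polynomials_asymptotics_sub} furnishes a reconstruction formula of the form $X(z) = e^{n\ell\sigma_3/2} R(z) P^{(\infty)}(z) e^{n(g(z)-\ell/2)\sigma_3}$ for $z$ outside the band $(a,b)$ and its lens, where $P^{(\infty)}$ is the outer parametrix built from $\ga(z)$ and $R(z) = I + \bigO(n^{-1})$ uniformly, with the corresponding expansion $R(z) = I + R_1 z^{-1} + \bigO(z^{-2})$ satisfying $R_1 = \bigO(n^{-1})$. The remaining ingredients are elementary: from \eqref{eq:LWg-definition}--\eqref{eq:def-g-function} we have $g(z) = \log z - i\mu z^{-1} + \bigO(z^{-2})$, and from \eqref{gamma-def} the parametrix admits $P^{(\infty)}(z) = I + P_1 z^{-1} + \bigO(z^{-2})$ with $[P_1]_{11} = [P_1]_{22} = 0$ and off-diagonal entries of magnitude $1/\sqrt{T}$ (with signs determined by the branch choices in $\ga$). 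Substituting these into the reconstruction, reading off the coefficient of $z^{-1}$, and inserting $\ell = -1 - \log T - \frac{1}{2}T\mu^2$ produces the off-diagonal entries of $X_1$ as $e^{\pm n\ell}\cdot(\pm i/\sqrt{T})(1+\bigO(n^{-1}))$, which give the announced formulas \eqref{h-subcrit}. Dividing the two immediately yields $\left(\ga_{n,n}^{(T,\mu,\tau)}\right)^2 = 1/T + \bigO(n^{-1})$. The recurrence coefficient $\be_{n,n-1}^{(T,\mu,\tau)} = i\mu + \bigO(n^{-1})$ comes from the $(2,2)$ entry of $X_1$ after the interpolation correction is applied; this matches the continuous-Hermite analog, where the shift $H_{n,k}^{(T,\mu)}(z) = H_{n,k}^{(T,0)}(z-i\mu)$ converts the vanishing recurrence coefficient of the real-weight Hermite polynomials into exactly $i\mu$.

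The main obstacle is bookkeeping rather than any new analytic estimate: the RH asymptotics and the uniform smallness $R(z) = I + \bigO(n^{-1})$ were already established in the analysis for Theorem \ref{thm:polynomials_asymptotics_sub}, so what remains is a careful tracking of the diagonal conjugations $e^{\pm n\ell\sigma_3/2}$ and of the $g$-function contributions to the $1/z$ coefficient. In particular, one must verify that the potentially $n$-dependent contributions entering the diagonal entries combine with the interpolation factors so that $\be_{n,n-1}^{(T,\mu,\tau)}$ emerges as the $n$-independent constant $i\mu$ (up to $\bigO(n^{-1})$) rather than a spurious term growing linearly in $n$.
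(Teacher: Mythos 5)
Your treatment of $h^{(T,\mu,\tau)}_{n,n}$, $\bigl(h^{(T,\mu,\tau)}_{n,n-1}\bigr)^{-1}$, and $\bigl(\ga^{(T,\mu,\tau)}_{n,n}\bigr)^2$ is essentially the paper's argument: read the residue of $z^{-1}$ off the reconstruction ${\bf P}_n = (\text{conjugation})\,e^{n\ell\sg_3/2}{\bf X}_n{\bf M}^{(0)}e^{n(g-\ell/2)\sg_3}(\text{conjugation})^{-1}$, use $[{\bf M}_1]_{12}=i/\sqrt{T}$, $[{\bf M}_1]_{21}=-i/\sqrt{T}$, ${\bf X}_{n,1}=\bigO(n^{-1})$ from \eqref{tt19}, and insert $\ell=-1-\log T-\tfrac{1}{2}T\mu^2$; the signs and the factor $2\pi$ you leave to "bookkeeping" come from the $\mathrm{diag}(1,-2\pi i)$ conjugation in \eqref{R-def} and from the paper's convention \eqref{eq:def_Cauchy_trans} (so in fact $h_{n,n}=+[{\bf P}_{n,1}]_{12}$, not $-[X_1]_{12}$), but that is indeed only bookkeeping.

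The genuine gap is in your extraction of $\be^{(T,\mu,\tau)}_{n,n-1}$ from $[X_1]_{22}$. Since the jump/residue structure gives $\det {\bf P}_n\equiv 1$, the coefficient matrix ${\bf P}_{n,1}$ is trace-free, so $[{\bf P}_{n,1}]_{22}=-[{\bf P}_{n,1}]_{11}=-c^{(T,\mu,\tau)}_{n,n,n-1}=in\mu+\bigO(n^{-1})$: it carries no information beyond the subleading coefficient of $p_{n,n}$ and grows linearly in $n$. No "interpolation correction" can repair this: the conjugation $\mathrm{diag}(1,-2\pi i)$ leaves diagonal entries unchanged, and the triangular factors ${\bf D}^u_\pm$ act only in a horizontal strip and do not alter the expansion at infinity used to define ${\bf P}_{n,1}$. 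The recurrence coefficient genuinely requires second-order data: by \eqref{IP5}, $\be_{n,n-1}=[{\bf P}_{n,2}]_{21}/[{\bf P}_{n,1}]_{21}-[{\bf P}_{n,1}]_{11}$, so you must push the expansion \eqref{P-at-infinity} to order $z^{-2}$, which brings in the second coefficient ${\bf M}_2$ of the outer parametrix (whose $(21)$ entry is $\mu/\sqrt{T}$) together with the cross term $-ng_1[{\bf M}_1]_{21}$ from expanding $e^{-ng(z)\sg_3}$ with $g_1=i\mu$. The ratio then produces $-i(n-1)\mu+\bigO(n^{-1})$, and only the cancellation of the $\bigO(n)$ parts against $[{\bf P}_{n,1}]_{11}=-in\mu+\bigO(n^{-1})$ leaves the finite answer $i\mu$; this is exactly the step your plan, as written, cannot produce from ${\bf X}_1$-level data alone. (An alternative repair would be a comparison with the shifted Hermite polynomials as in the proof of Theorem \ref{thm-subcrit-winding}, but that is a different argument from the one you propose.)
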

Theorem \ref{thm-normalizations-subcrit} is proved in 
\S\ref{subsec-op-subcrit}.

We now define 
\eq
\label{lambda-def}
\lambda\equiv\lambda(T):=i\frac{\pi+\sqrt{\pi^2-T}+i\sqrt{T}}{\pi-\sqrt{\pi^2-T}-i\sqrt{T}}
\endeq
and
\eq
\label{Gk-def}
G_j \equiv G_j(T,\mu,\tau,n) := (-1)^{n+1}F_j(T,\mu,n)e^{2\pi i\tau}, \quad j\in\{-1\}\cup\mathbb{N}. 
\endeq
Then the asymptotic behavior of the normalization constants in the Hermite regime 
is as follows.
\begin{theo}[Normalizing constants and recurrence coefficients in the Hermite regime]
\label{thm-hermite-norms}
Fix $T\in(0,\pi^2)$ and a non-negative integer $k$, and choose $\mu$ satisfying 
\eqref{k-condition}.  Then 
\eq
\label{hnn-asymptotics}
h_{n,n}^{(T,\mu,\tau)} =\left\{
\begin{aligned}
& \frac{2\pi }{T^{n+\frac{1}{2}}e^{n(\frac{1}{2}T\mu^2+1)}}\left(1 + \frac{2\pi}{\sqrt{T}}\frac{G_k}{1+G_k}\lambda + \frac{2\pi}{\sqrt{T}}\frac{G_{k-1}^{-1}}{1+G_{k-1}^{-1}}\lambda^{-1} + \mathcal{O}(n^{-1})\right)\alpha_\infty^{-2k}, \ &{\rm for} \ k>0, \\
 & \frac{2\pi }{T^{n+\frac{1}{2}}e^{n(\frac{1}{2}T\mu^2+1)}}\left(1 + \frac{2\pi}{\sqrt{T}}\frac{G_k}{1+G_k}\lambda + \mathcal{O}(n^{-1})\right)\alpha_\infty^{-2k}, \ &{\rm for} \ k=0,
 \end{aligned}
 \right.
\endeq
\eq
\label{hnnm1-asymptotics}
\begin{split}
&\left(h_{n,n-1}^{(T,\mu,\tau)}\right)^{-1} = \\
&\,\,\,\,\left\{
\begin{aligned}
&\frac{T^{n-\frac{1}{2}}e^{n(\frac{1}{2}T\mu^2+1)}}{2\pi } \left(1 +\frac{2\pi}{\sqrt{T}}\frac{G_k}{1+G_k}\lambda^{-1} +\frac{2\pi}{\sqrt{T}}\frac{G_{k-1}^{-1}}{1+G_{k-1}^{-1}}\lambda + \mathcal{O}(n^{-1})\right)\alpha_\infty^{2k},  \ &{\rm for} \ k>0, \\
& \frac{T^{n-\frac{1}{2}}e^{n(\frac{1}{2}T\mu^2+1)}}{2\pi } \left(1 +\frac{2\pi}{\sqrt{T}}\frac{G_k}{1+G_k}\lambda^{-1}  + \mathcal{O}(n^{-1})\right)\alpha_\infty^{2k},  \ &{\rm for} \ k=0, \\
 \end{aligned}
 \right.
\end{split}
\endeq
\begin{multline}
\label{gamma1-asymptotics}
 \left(\ga^{(T,\mu,\tau)}_{n, j}\right)^2 =\frac{1}{T} \left(1 +\frac{2\pi}{\sqrt{T}}\left(\frac{G_k}{1+G_k}+\frac{G_{k-1}^{-1}}{1+G_{k-1}^{-1}}\right)(\lambda+\lambda^{-1})\right. \\
\left. +\frac{4\pi^2}{T}\left(\left(\frac{G_k}{1+G_k}\right)^2+\left(\frac{G_{k-1}^{-1}}{1+G_{k-1}^{-1}}\right)^2\right) + \mathcal{O}(n^{-1})\right).
\end{multline}
Here $G_{k-1}^{-1}=o(1)$ except at 
$\mu=\mu_c+(k-\frac{1}{2})\frac{\log n}{2\pi n}$, and $G_k=o(1)$ except at 
$\mu=\mu_c+(k+\frac{1}{2})\frac{\log n}{2\pi n}$.  
\end{theo}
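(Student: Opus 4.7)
The strategy is to extract the three quantities in question from the large-$z$ expansion of the interpolation Riemann--Hilbert problem solution $X(z)$ whose asymptotic analysis has already been carried out in \S\ref{subsec-hermite-norms} to prove Theorem \ref{thm:polynomials_asymptotics_crit}. Writing the normalized solution as $X(z)z^{-n\sg_3} = I + X_1 z^{-1} + X_2 z^{-2} + \bigO(z^{-3})$ at infinity, the constants $h_{n,n}^{(T,\mu,\tau)}$ and $h_{n,n-1}^{(T,\mu,\tau)}$ are read off (up to explicit numerical factors) from the $(1,2)$ and $(2,1)$ entries of $X_1$, while $\left(\ga_{n,n}^{(T,\mu,\tau)}\right)^2$ is the ratio $h_{n,n}/h_{n,n-1}$.

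The plan is to trace through the chain of transformations $X \to T \to S \to R$ developed in \S\ref{subsec-hermite-norms}, in which the outer parametrix carries the modifying factor $(\alpha(z)/\alpha_\infty)^k$ and a local Hermite parametrix is installed in a disk $\mathbb{D}_{\widetilde{z}_c}$ about $\widetilde{z}_c$. The large-$z$ behavior of the modified outer parametrix immediately supplies the overall prefactor $\alpha_\infty^{\mp 2k}$ appearing in \eqref{hnn-asymptotics}--\eqref{hnnm1-asymptotics}. The residual matrix $R(z)$ then satisfies a small-norm RHP whose dominant jump $V(z)$ is supported on $\d\mathbb{D}_{\widetilde{z}_c}$ and measures the mismatch between the Hermite parametrix and the outer parametrix along that circle. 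The moment $R_1$ is obtained as the Cauchy integral of $V(z)$ around that circle (plus higher-order Neumann corrections); the integrand receives two competing subleading contributions coming from the large-argument expansions of the Hermite polynomials $\mathfrak{h}_k$ and $\mathfrak{h}_{k-1}$ building the Hermite parametrix. One of these produces an $F_k$-type scalar and the other an $F_{k-1}^{-1}$-type scalar, and the complex phases multiplying them, determined by the local behavior of $\alpha(z)/W(z)$ and of $\gamma(z)$ at $\widetilde{z}_c$, assemble precisely into the quantity $\lambda(T)$ of \eqref{lambda-def}. The sign $(-1)^{n+1}$ and phase $e^{2\pi i\tau}$ distinguishing $G_j$ from $F_j$ enter through the discrete-lattice structure inherited from the weight on \eqref{lattice-def}.

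Substituting $R_1$ into the identities for $[X_1]_{12}$ and $[X_1]_{21}$ yields \eqref{hnn-asymptotics} and \eqref{hnnm1-asymptotics}; the rational expressions $G_k/(1+G_k)$ and $G_{k-1}^{-1}/(1+G_{k-1}^{-1})$ emerge as Neumann-series resummations of the iterative solution to the small-norm RHP, and it is precisely this resummation---rather than a single-order truncation---that produces expressions valid uniformly throughout the interval in \eqref{k-condition}. Multiplying the two expansions and cancelling the $\alpha_\infty^{\mp 2k}$ prefactors then gives \eqref{gamma1-asymptotics}: the symmetric combination $\lambda+\lambda^{-1}$ arises from summing the two linear corrections, while the second-order cross terms generated by the Neumann iteration produce the diagonal $(G_k/(1+G_k))^2$ and $(G_{k-1}^{-1}/(1+G_{k-1}^{-1}))^2$ contributions. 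I expect the main obstacle to be the careful bookkeeping on $\d\mathbb{D}_{\widetilde{z}_c}$: one must simultaneously retain two subleading terms of the Hermite parametrix (to capture both $F_k$ and $F_{k-1}^{-1}$ contributions), verify that the accompanying phases really assemble into the single quantity $\lambda(T)$, and perform the geometric resummation that converts the raw error terms of Theorem \ref{thm:polynomials_asymptotics_crit} into the fully uniform rational expressions stated here.
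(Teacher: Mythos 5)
Your overall skeleton agrees with the paper's proof: read $h_{n,n}^{(T,\mu,\tau)}$ and $(h_{n,n-1}^{(T,\mu,\tau)})^{-1}$ off the $(1,2)$ and $(2,1)$ entries of the residue matrix at infinity, let the modified outer parametrix supply the factors $\frac{i}{\sqrt{T}}\alpha_\infty^{\mp 2k}$ (and, with $-2\pi i\,e^{\pm n\ell}$, the stated prefactors), extract the $G$-terms from the circle $\partial\mathbb{D}_{\widetilde{z}_c}$ with phases assembling into $\lambda^{\pm1}$ through $\gamma(\widetilde{z}_c)$, and obtain \eqref{gamma1-asymptotics} by multiplying the two expansions (the squared terms are just products of the linear corrections, with the $\lambda^{\pm2}$ cross terms discarded because $G_kG_{k-1}^{-1}=\mathcal{O}(n^{-1})$ — they are not produced by higher Neumann iterates). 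The genuine gap is in the mechanism you propose for the uniform rational expressions $G_k/(1+G_k)$ and $G_{k-1}^{-1}/(1+G_{k-1}^{-1})$: you want them to come from a Neumann-series resummation of a single small-norm remainder RHP whose dominant jump lives on $\partial\mathbb{D}_{\widetilde{z}_c}$. But throughout the interval \eqref{k-condition} that jump is \emph{not} uniformly small: at and near the endpoints one of $Q_\pm$ (equivalently $G_k$ or $G_{k-1}^{-1}$) is $\mathcal{O}(1)$, so small-norm theory does not apply there, the Neumann series is not known to converge, and even granting convergence one would have to evaluate all iterated Cauchy integrals in closed form — which is precisely the difficulty to be avoided.

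The paper's missing ingredient is the explicit \emph{parametrix for the error} (Riemann--Hilbert Problem \ref{rhp-error-parametrix}): a rational matrix $\mathbf{Y}_n^{(k)}(z)=\mathbb{I}+\mathbf{B}(z-\widetilde{z}_c)^{-1}$ that solves the dominant (possibly order-one) jump on $\partial\mathbb{D}_{\widetilde{z}_c}$ \emph{exactly}. The denominators $1+G_k$ and $1+G_{k-1}^{-1}$ arise algebraically from the linear system determining $\mathbf{B}$ — specifically from the subleading coefficient $\mathbf{F}_1$ of $\mathbf{F}_k^{(\mathrm{out})}(z)$ at $\widetilde{z}_c$ via the combinations $\frac{Q_\pm}{W_1}[\mathbf{F}_0^{-1}\mathbf{F}_1]_{21,12}=G_k,\,-G_{k-1}^{-1}$ — not from resumming a Neumann series; your sketch captures the two numerator scales ($\kappa_k$, $\kappa_{k-1}$ from the Hermite expansion) but not the origin of these denominators. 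Only after dividing out $\mathbf{Y}_n^{(k)}$ is the residual $\mathbf{Z}_n^{(k)}$ a bona fide small-norm problem with jumps $\mathcal{O}(n^{-1/2})$ uniformly in $\mu$, so that its first moment is given by a single residue; and the final formulas collect contributions from \emph{both} $\mathbf{B}$ and $\mathbf{Z}_{n,1}^{(k)}$ (together with $\mathbf{M}_1^{(k)}$), which your single-remainder scheme conflates. If you replace the proposed resummation by this explicit rational error parametrix, the rest of your outline becomes the paper's argument.
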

If desired, the asymptotic behavior of $\beta_{n,n-1}^{(T,\mu,\tau)}$ can also be 
extracted from the analysis in this paper, although we do not write down the 
lengthy but straightforward derivation.  Theorem \ref{thm-hermite-norms} is 
proved in \S\ref{subsec-hermite-norms}.

\subsection{Outline and notation}
From \eqref{eq:total_offset_formula}, the distribution of winding numbers is 
expressed in terms of the Hankel determinant $\mathcal{H}_n(T,\mu,\tau)$.  
We start in \S\ref{sec-integral-hankel} by representing the logarithm of this 
determinant as an integral involving the coefficient of the $x^{n-1}$ term of the 
orthogonal polynomial $p_{n,n}^{(T,\mu,\tau)}(x)$.  This connection allows us 
to use asymptotic analysis of the orthogonal polynomials to obtain information 
on the winding numbers.  In \S\ref{sec-rhp} we pose the Riemann--Hilbert problem 
encoding the discrete orthogonal polynomials and carry out several standard steps 
in the nonlinear steepest-descent analysis, namely interpolation of the poles, 
introduction of the $g$-function, and opening of lenses.  In 
\S\ref{sec-subcrit-winding} we complete the analysis in the subcritical case and 
compute the winding numbers and orthogonal polynomial asymptotics for 
$|\mu|<\mu_c$.  Finally, in \S\ref{sec-hermite-analysis} we consider the 
double-scaling limit $n\to\infty$, $\mu\to\mu_c$ and compute the winding 
numbers and orthogonal polynomial asymptotics in the Hermite regime.  

\noindent
{\it Notation}.  
With the exception of 
\eq
\mathbb{I}:=\bbm 1 & 0 \\ 0 & 1 \ebm, \quad \sigma_1:=\bbm 0 & 1 \\ 1 & 0 \ebm, \quad \sigma_3:=\bbm 1 & 0 \\ 0 & -1 \ebm,
\endeq
matrices are denoted by bold capital letters.  The non-negative integers are 
denoted by $\mathbb{N}$.  
We denote the $(jk)$th entry of a matrix ${\bf M}$ by $[{\bf M}]_{jk}$.
In reference to a smooth, oriented contour $\Sigma$, for $z\in\Sigma$ we denote by 
$f_+(z)$ (respectively, $f_-(z)$) the non-tangential limit of $f(\zeta)$ as 
$\zeta$ approaches $\Sigma$ from the left (respectively, the right).

%\section{Preliminary steps}

%\subsection{Discrete Gaussian orthogonal polynomials}
%
%By a standard result for Hankel determinants, we can express 
%$\Hankel_n(T,\mu,\tau)$ using the \emph{discrete Gaussian orthogonal polynomials} 
%as defined in \eqref{eq:defn_of_discrete_Gaussian_OP}.  
%We then have (see, e.g., \cite[Proposition 2.2.2]{BleherL:2014}),
%\begin{equation}
%  \Hankel_n(T,\mu,\tau) = \prod^{n - 1}_{j = 0} h^{(T,\mu,\tau)}_{n,j},
%\end{equation}
%where
%\begin{equation} \label{eq:defn_of_h_nk}
%  h^{(T,\mu,\tau)}_{n, k} := \frac{1}{n} \sum_{x \in \lattice} p^{(T,\mu,\tau)}_{n, k}(x)^2 e^{-Tn(x^2-2i\mu x)/2}.
%\end{equation}
%If desired, we can write 
%\begin{equation}
%\begin{split}
%\Prob(\mathcal{W}_n(T,\mu)=\om) = \frac{e^{2\pi i\om\hsgn{n}} \int_0^1   \prod^{n - 1}_{j = 0} h^{(T,\mu,\tau)}_{n,j} e^{-2\pi i \om \tau}d\tau}{  \prod^{n - 1}_{j = 0} h^{(T,\mu,\hsgn{n})}_{n,j}}.
%\end{split}
%\end{equation}
%
%
%

\section{Integral representation for the Hankel determinant $\Hankel_n(T,\mu,\tau)$}
\label{sec-integral-hankel}
We begin by deriving a differential identity satisfied by the Hankel 
determinant $\Hankel_n(T,\mu,\tau)$. Introduce the notation for the 
coefficients of the orthogonal polynomials
\eq
p^{(T,\mu,\tau)}_{n, k}(x) = x^k + \sum_{j=0}^{k-1} c^{(T,\mu,\tau)}_{n, k,j} x^j.
\endeq
We then have the following proposition.
\begin{prop} The Hankel determinant $\Hankel_n(T,\mu,\tau)$ satisfies the 
differential equation
%  \begin{equation}\label{eq:diff_mu4}
%\frac{\d}{\d \mu} \log \Hankel_n(T,\mu,\tau) = -inT  c^{(T,\mu,\tau)}_{n, n,n-1},
%\end{equation}  
\begin{equation} \label{eq:diff_tau0}
\frac{\d}{\d \tau} \log \Hankel_n(T,\mu,\tau)  = inT\mu + Tc^{(T,\mu,\tau)}_{n, n,n-1}.
\end{equation}
\end{prop}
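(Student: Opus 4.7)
The plan is to use the product formula $\mathcal{H}_n(T,\mu,\tau) = \prod_{j=0}^{n-1} h_{n,j}^{(T,\mu,\tau)}$ from \eqref{Hn-ito-hn}, so that $\partial_\tau \log \mathcal{H}_n = \sum_{j=0}^{n-1} \partial_\tau \log h_{n,j}^{(T,\mu,\tau)}$, and then compute a clean formula for $\partial_\tau \log h_{n,j}^{(T,\mu,\tau)}$ using orthogonality and the recurrence \eqref{eq:three_term_recurrence}. The final step will be a telescoping identity that converts the resulting sum of recurrence coefficients $\beta^{(T,\mu,\tau)}_{n,j}$ into $c^{(T,\mu,\tau)}_{n,n,n-1}$.

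To compute $\partial_\tau h^{(T,\mu,\tau)}_{n,j}$, I would view it as a bilinear functional in the monic polynomial $p^{(T,\mu,\tau)}_{n,j}$ and the lattice $\lattice$, both of which depend on $\tau$. The chain rule produces two contributions. First, the derivative of $p^{(T,\mu,\tau)}_{n,j}$ with respect to $\tau$ is a polynomial of degree $\le j-1$ (because the leading coefficient is fixed at $1$), so by orthogonality its inner product against $p^{(T,\mu,\tau)}_{n,j}$ vanishes. Second, each lattice point $x=(k+\tau)/n$ shifts at rate $1/n$, which yields
\begin{equation*}
\partial_\tau h^{(T,\mu,\tau)}_{n,j} = \frac{1}{n^2}\sum_{x\in\lattice}\frac{d}{dx}\Bigl[p^{(T,\mu,\tau)}_{n,j}(x)^2 e^{-\frac{Tn}{2}(x^2-2i\mu x)}\Bigr].
\end{equation*}
Expanding the derivative, using $w'(x)/w(x)=-Tn(x-i\mu)$, and then invoking (i) orthogonality of $p_{n,j}$ to the degree-$(j-1)$ polynomial $p'_{n,j}$ and (ii) the three-term recurrence \eqref{eq:three_term_recurrence} to evaluate $\langle p_{n,j},xp_{n,j}\rangle = \beta^{(T,\mu,\tau)}_{n,j} h^{(T,\mu,\tau)}_{n,j}$, reduces everything to
\begin{equation*}
\partial_\tau \log h^{(T,\mu,\tau)}_{n,j} = T\bigl(i\mu - \beta^{(T,\mu,\tau)}_{n,j}\bigr).
\end{equation*}

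Summing from $j=0$ to $n-1$ gives $\partial_\tau \log \mathcal{H}_n = inT\mu - T\sum_{j=0}^{n-1}\beta^{(T,\mu,\tau)}_{n,j}$. To finish, I would use the standard identity $\sum_{j=0}^{n-1}\beta^{(T,\mu,\tau)}_{n,j} = -c^{(T,\mu,\tau)}_{n,n,n-1}$, which follows by reading the $x^{k}$-coefficient off both sides of \eqref{eq:three_term_recurrence}: the relation $c_{n,k+1,k} = c_{n,k,k-1} - \beta^{(T,\mu,\tau)}_{n,k}$ telescopes from $c_{n,0,-1}=0$ up to $c_{n,n,n-1}$. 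Combining yields \eqref{eq:diff_tau0}.

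The only delicate step is bookkeeping for the two distinct sources of $\tau$-dependence (lattice shift versus polynomial coefficients), in particular being careful that $\partial_\tau p^{(T,\mu,\tau)}_{n,j}$ means the polynomial obtained by $\tau$-differentiating the \emph{coefficients} and then evaluating on the (current) lattice. Once that separation is clean, every remaining computation is an orthogonality or recurrence argument and no real obstacle remains.
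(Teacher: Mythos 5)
Your proposal is correct and follows essentially the same route as the paper: differentiate $\log\Hankel_n=\sum_j\log h^{(T,\mu,\tau)}_{n,j}$ using \eqref{Hn-ito-hn}, kill the polynomial-derivative contributions by orthogonality (both $\partial_\tau p^{(T,\mu,\tau)}_{n,j}$ and $p^{(T,\mu,\tau)\,\prime}_{n,j}$ have degree at most $j-1$), identify the weight/lattice term as $T(i\mu-\be^{(T,\mu,\tau)}_{n,j})$ via the recurrence, and telescope $\be^{(T,\mu,\tau)}_{n,j}=c^{(T,\mu,\tau)}_{n,j,j-1}-c^{(T,\mu,\tau)}_{n,j+1,j}$ to obtain \eqref{eq:diff_tau0}. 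The only cosmetic difference is your bookkeeping of the $\tau$-dependence (coefficients versus lattice shift, with the shift producing the total $x$-derivative of $p^2$ times the weight), which is equivalent to the paper's direct differentiation in $\tau$.
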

\begin{proof}

By \eqref{Hn-ito-hn},
\begin{equation} \label{eq:diff_tau1}
\frac{\d}{\d \tau} \log \Hankel_n(T,\mu,\tau) = \sum_{j=0}^{n-1} \frac{\frac{\d}{\d \tau}h^{(T,\mu,\tau)}_{n,j}}{h^{(T,\mu,\tau)}_{n,j}}.
\end{equation}
Therefore we compute $ \frac{\d}{\d \tau} h^{(T,\mu,\tau)}_{n, k}$.
We have
\begin{equation} \label{eq:diff_tau2}
  h^{(T,\mu,\tau)}_{n, k} = \frac{1}{n} \sum_{\ell \in \Z} p^{(T,\mu,\tau)}_{n, k}\left(\frac{\ell+\tau}{n}\right)^2 e^{-\frac{T}{2n}\left(\ell+\tau\right)^2}e^{iT\mu \left(\ell+\tau\right)}.
\end{equation}
Writing $x_\ell = \frac{\ell+\tau}{n}$ and taking the derivative with respect to $\tau$ gives
\begin{equation} \label{eq:diff_tau3}
\begin{aligned}
 \frac{\d}{\d \tau} h^{(T,\mu,\tau)}_{n, k} &= \frac{1}{n} \sum_{\ell \in \Z}\left[\frac{2}{n} p^{(T,\mu,\tau)}_{n, k}(x_\ell)\left(\frac{\d}{\d\tau}p^{(T,\mu,\tau)}_{n, k}(x_\ell)\right) -Tx_\ell + iT\mu \right]e^{-\frac{T}{2n}\left(\ell+\tau\right)^2}e^{iT\mu \left(\ell+\tau\right)} \\
 & = \frac{1}{n} \sum_{x \in \lattice}\left[\frac{2}{n} p^{(T,\mu,\tau)}_{n, k}(x)\left(\frac{\d}{\d\tau}p^{(T,\mu,\tau)}_{n, k}(x)\right) +(iT\mu-Tx)p^{(T,\mu,\tau)}_{n, k}(x)^2  \right]e^{-\frac{nT}{2}(x^2 - 2i\mu x)} \\
 & = iT\mu h^{(T,\mu,\tau)}_{n, k} -\frac{T}{n} \sum_{x \in \lattice} xp^{(T,\mu,\tau)}_{n, k}(x)^2e^{-\frac{nT}{2}(x^2 - 2i\mu x)} \\
  & = iT\mu h^{(T,\mu,\tau)}_{n, k} -Th^{(T,\mu,\tau)}_{n, k}  \be^{(T,\mu,\tau)}_{n, k}.
 \end{aligned}
\end{equation} 
Combining this with \eqref{eq:diff_tau1} gives 
\begin{equation} \label{eq:diff_tau4}
\frac{\d}{\d \tau} \log \Hankel_n(T,\mu,\tau) = \sum_{j=0}^{n-1}(iT\mu  -T  \be^{(T,\mu,\tau)}_{n, j}). % = inT\mu + Tc^{(T,\mu,\tau)}_{n, n,n-1}.
\end{equation}
Comparing coefficients of the $x^k$ term in the three-term recurrence \eqref{eq:three_term_recurrence}, we find that
\eq
 \be^{(T,\mu,\tau)}_{n, k} = c^{(T,\mu,\tau)}_{n, k,k-1} - c^{(T,\mu,\tau)}_{n, k+1,k}.
\endeq
It follows that the second term in the right-hand side of 
\eqref{eq:diff_tau4} gives a telescoping sum, and we arrive at 
the formula \eqref{eq:diff_tau0}.
\end{proof}
We note the differential identity in 
\cite[Proposition 4.1]{LiechtyW:2016} used to study the zero-drift case 
involves two derivatives.  The fact that this differential identity
involves only one derivative simplifies the analysis somewhat.  
Integrating, we obtain the following proposition which, along with \eqref{eq:total_offset_formula}, we will subsequently use to prove Theorems \ref{thm-subcrit-winding} and \ref{thm-hermite-winding}.
\begin{prop}
\label{Hankel-integral-prop}
The Hankel determinant $\Hankel_n(T,\mu,\tau)$ has the integral 
representation 
\eq
\log \left(\frac{\Hankel_n(T,\mu,\tau) }{\Hankel_n(T,\mu,\hsgn{n})}\right) = \int_{\hsgn{n}}^\tau \left(inT\mu + Tc^{(T,\mu,v)}_{n, n,n-1}\right)\,dv.
\endeq
\end{prop}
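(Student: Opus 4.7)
The plan is to obtain this identity as an immediate consequence of the differential identity \eqref{eq:diff_tau0} established in the previous proposition, via the fundamental theorem of calculus. Concretely, once we have
$$\frac{\d}{\d v} \log \Hankel_n(T,\mu,v) = inT\mu + Tc^{(T,\mu,v)}_{n, n,n-1},$$
I would integrate both sides with respect to $v$ from $\hsgn{n}$ to $\tau$. The left-hand side telescopes to $\log\Hankel_n(T,\mu,\tau) - \log\Hankel_n(T,\mu,\hsgn{n})$, and combining the two logarithms into a ratio produces the stated formula. No further computation is needed beyond this invocation.

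The only nontrivial point is to verify that the hypotheses of the fundamental theorem of calculus are met along the integration path, i.e.\ that $\log\Hankel_n(T,\mu,v)$ is a well-defined $C^1$ function of $v$ and that the integrand $c^{(T,\mu,v)}_{n,n,n-1}$ is continuous in $v$. The Hankel determinant is an entire function of $v$ since it is given by a uniformly convergent sum of entire functions (the Gaussian sum defining its entries converges in any horizontal strip). The coefficient $c^{(T,\mu,v)}_{n,n,n-1}$ of the orthogonal polynomial is well-defined whenever the sub-Hankel determinants $\Hankel_k(T,\mu,v)$, $k=1,\ldots,n$, are nonzero, and in that case it is a rational, hence smooth, function of $v$ via the standard expression in terms of moment determinants. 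The potential obstacle, which I expect to be the main (and really only) subtlety, is ruling out zeros of $\Hankel_k(T,\mu,v)$ for $v$ in the segment from $\hsgn{n}$ to $\tau$; this is a standard observation for the parameter ranges used subsequently (and indeed the derivation of \eqref{eq:diff_tau0} already implicitly assumes nonvanishing of the relevant normalizing constants $h^{(T,\mu,v)}_{n,j}$). Granting this, the identity follows with no additional work.
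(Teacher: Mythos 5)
Your proposal is correct and matches the paper's proof exactly: the identity is obtained by integrating the differential identity \eqref{eq:diff_tau0} in $v$ from $\hsgn{n}$ to $\tau$ and combining the logarithms. Your additional remarks on smoothness and nonvanishing of the Hankel determinants are reasonable caveats that the paper leaves implicit, but they do not change the argument.
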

\begin{proof}
This follows immediately from \eqref{eq:diff_tau0} via 
\eq
\int_{\hsgn{n}}^\tau \left(inT\mu + Tc^{(T,\mu,v)}_{n, n,n-1}\right)\,dv =\int_{\hsgn{n}}^\tau \frac{\d}{\d v} \log \Hankel_n(T,\mu,v) \,dv =  \log \left(\frac{\Hankel_n(T,\mu,\tau) }{\Hankel_n(T,\mu,\hsgn{n})}\right).
\endeq
\end{proof}

\section{Formulation and initial analysis of the Riemann--Hilbert problem}
\label{sec-rhp}

In this section we express the discrete orthogonal polynomials in terms of a 
Riemann--Hilbert problem, and then carry out several changes of variables that will 
be used in all the following analysis. For ease of exposition we assume throughout that $\mu> 0$, although the analysis is similar for $\mu<0$.

\subsection{The discrete Gaussian orthogonal polynomial Riemann--Hilbert problem}

The orthogonal polynomials \eqref{eq:defn_of_discrete_Gaussian_OP} are encoded 
in the following meromorphic Riemann--Hilbert problem.
\begin{rhp}[Discrete Gaussian orthogonal polynomial problem]
\label{rhp-dop}
Fix $n\in\{1,2,\ldots\}$ and find a $2\times 2$ matrix-valued 
function $\mathbf P_n(z)$ with the 
following properties:
\begin{itemize}
\item[]{\bf Analyticity:} $\mathbf P_n(z)$ is a meromorphic function of $z$ 
and is analytic for $z\in\C\setminus L_{n,\tau}$.
\item[]{\bf Normalization:}  There exists a function $r(x)>0$ on  $L_{n,\tau}$ 
such that 
\begin{equation} \label{IP2a}
\lim_{x\to\infty} r(x)=0,
\end{equation} 
and such that as $z\to\infty$, $\mathbf P_n(z)$ admits the asymptotic 
expansion
\begin{equation} \label{IP2}
\mathbf P_n(z) = \left(\mathbb{I}+\frac{\mathbf P_{n,1}}{z}+\frac{\mathbf P_{n,2}}{z^2}+ \mathcal{O}\left(\frac{1}{z^3}\right) \right)
\begin{pmatrix}
z^n & 0 \\
0 & z^{-n}
\end{pmatrix},\quad z\in \C\setminus \left[\bigcup_{x\in \lattice}^\infty D\big(x,r(x)\big)\right],
\end{equation}
where $D(x,r(x))$ denotes a disk of radius $r(x)>0$ centered at $x$.
\item[]{\bf Residues at poles:}  At each node $x\in L_{n,\tau}$, the elements 
$[\mathbf P_n(z)]_{11}$ and $[\mathbf P_n(z)]_{21}$ of the matrix 
$\mathbf P_n(z)$ 
are analytic functions of $z$, and the elements $[\mathbf P_n(z)]_{12}$ and
$[\mathbf P_n(z)]_{22}$ have a simple pole with the residues
\begin{equation} \label{IP1}
\underset{z=x}{\rm Res}\; [\mathbf P_n(z)]_{j2}=\frac{1}{n} e^{-\frac{nT}{2}(x^2-2i\mu x)} [\mathbf P_n(x)]_{j1},\quad j=1,2.
\end{equation}
\end{itemize}
\end{rhp}
The unique solution to Riemann--Hilbert Problem \ref{rhp-dop} 
(see \cite{FokasIK:1991,BleherL:2011}) is 
\begin{equation} \label{IP3}
\mathbf P_n(z) :=
\begin{pmatrix}
p_{n,n}^{(T,\mu,\tau)}(z) & \left(Cp_{n,n}^{(T,\mu,\tau)}\right)(z) \\
(h^{(T,\mu,\tau)}_{n,n-1})^{-1}p_{n,n-1}^{(T,\mu,\tau)}(z) & (h^{(T,\mu,\tau)}_{n,n-1})^{-1}\left(Cp_{n,n-1}^{(T,\mu,\tau)}\right)(z)
\end{pmatrix},
\end{equation}
where the weighted discrete Cauchy transform $C$ is 
\eq
\label{eq:def_Cauchy_trans}
Cf(z):=\frac{1}{n}\sum_{x\in L_{n,\tau}}\frac{f(x)e^{-\frac{nT}{2}(x^2-2i\mu x)}}{z-x}.
\endeq
The normalizing constants in \eqref{eq:defn_of_discrete_Gaussian_OP} and the recurrence 
coefficients \eqref{eq:three_term_recurrence} can be extracted from the 
matrices $\mathbf P_{n,1}$ and $\mathbf P_{n,2}$ in the expansion \eqref{IP2}.  
Specifically, 
\begin{equation}\label{IP4}
h^{(T,\mu,\tau)}_{n, n}=[\mathbf P_{n,1}]_{12}\,, \quad \left(h^{(T,\mu,\tau)}_{n, n-1}\right)^{-1}=[\mathbf P_{n,1}]_{21}\,,
\end{equation}
\begin{equation}\label{IP5}
\be^{(T,\mu,\tau)}_{n, n-1}=\frac{[\mathbf P_{n,2}]_{21}}{[\mathbf P_{n,1}]_{21}}-[\mathbf P_{n,1}]_{11}\,,
\end{equation}
and
\begin{equation}\label{IP5a}
c^{(T,\mu,\tau)}_{n, n,n-1} = [\mathbf P_{n,1}]_{11}.
\end{equation}
\begin{lem}
\label{winding-probs-ito-P}
Writing ${\bf P}_{n,1}\equiv{\bf P}_{n,1}(T,\mu,\tau)$, we have 
\eq
\Prob(\mathcal{W}_n(T,\mu)=\om) = e^{2\pi i\omega\hsgn{n}} \int_0^1  
\exp\left(\int_{\hsgn{n}}^\tau \left(inT\mu + T 
[\mathbf P_{n,1}(T,\mu,v)]_{11}
\right)\,dv\right)
e^{-2\pi i \om \tau}d\tau.
\endeq
\end{lem}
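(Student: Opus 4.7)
The plan is to assemble three ingredients that are already in place: the formula \eqref{eq:total_offset_formula} expressing $\Prob(\mathcal{W}_n(T,\mu)=\omega)$ as an integral of the ratio $\Hankel_n(T,\mu,\tau)/\Hankel_n(T,\mu,\hsgn{n})$, the integral representation of this ratio from Proposition \ref{Hankel-integral-prop}, and the identification \eqref{IP5a} of the orthogonal polynomial coefficient $c^{(T,\mu,\tau)}_{n,n,n-1}$ with the $(1,1)$-entry of $\mathbf{P}_{n,1}$.

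First I would exponentiate the identity in Proposition \ref{Hankel-integral-prop} to write
\[
\frac{\Hankel_n(T,\mu,\tau)}{\Hankel_n(T,\mu,\hsgn{n})} = \exp\left(\int_{\hsgn{n}}^\tau \left(inT\mu + Tc^{(T,\mu,v)}_{n, n,n-1}\right)\,dv\right).
\]
Then I would substitute $c^{(T,\mu,v)}_{n, n,n-1} = [\mathbf{P}_{n,1}(T,\mu,v)]_{11}$, which is exactly \eqref{IP5a} applied with the third parameter equal to $v$, giving
\[
\frac{\Hankel_n(T,\mu,\tau)}{\Hankel_n(T,\mu,\hsgn{n})} = \exp\left(\int_{\hsgn{n}}^\tau \left(inT\mu + T[\mathbf{P}_{n,1}(T,\mu,v)]_{11}\right)\,dv\right).
\]
Plugging this expression into \eqref{eq:total_offset_formula} yields the claimed formula immediately.

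There is no real obstacle here: the lemma is essentially a bookkeeping step that merges the probabilistic representation, the differential identity for the Hankel determinant, and the Riemann--Hilbert encoding of the orthogonal polynomials into a single formula ready for asymptotic analysis. The only point worth checking is consistency of the $\tau$ parameter: in \eqref{IP5a} the identification is stated for the fixed parameter $\tau$ used throughout the Riemann--Hilbert problem, but since Riemann--Hilbert Problem \ref{rhp-dop} is posed for an arbitrary value of this parameter, the identification remains valid after replacing $\tau$ by the integration variable $v$.
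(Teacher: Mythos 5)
Your proposal is correct and is exactly the paper's argument: the proof given there simply combines \eqref{eq:total_offset_formula}, Proposition \ref{Hankel-integral-prop}, and \eqref{IP5a}, which is precisely your chain of substitutions. Your remark about the validity of \eqref{IP5a} with the parameter $\tau$ replaced by the integration variable $v$ is a fine consistency check, consistent with Riemann--Hilbert Problem \ref{rhp-dop} being posed for arbitrary $\tau$.
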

\begin{proof}
Simply combine \eqref{eq:total_offset_formula}, Proposition 
\ref{Hankel-integral-prop}, and \eqref{IP5a}.
\end{proof}

\subsection{Interpolation of poles}
We now carry out a sequence of changes of variables
$${\bf P}_n \to {\bf R}_n \to {\bf T}_n \to {\bf S}_n$$
in order to reduce Riemann--Hilbert Problem \ref{rhp-dop} to one that can 
approximated by exactly solvable problems.  In the first step we interpolate 
the poles, replacing the merophorphic function ${\bf P}_n$ with the 
sectionally analytic function ${\bf R}_n$.  
Define 
\begin{equation}
\mathbf D^u_{\pm}(z) := \begin{pmatrix} 1 & \displaystyle -\frac{1}{n\Pi(z)}e^{-nT(z^2-2i\mu z)/2}e^{\pm i\pi(n  z-\tau)} \\ 0 & 1 \end{pmatrix}, 
\end{equation}
%\mathbf D^l_{\pm}(z) = \begin{pmatrix} \Pi(z)^{-1} & 0 \\ -ne^{nT(z^2-2i\mu z)/2}e^{\pm i\pi(n z-\tau)} & \Pi(z) \end{pmatrix}.
where
\begin{equation}\label{red1}
\Pi(z):=\frac{\sin(n\pi z-\tau\pi)}{n\pi}.
\end{equation}
Fix two positive numbers $\epsilon<\mu$ and $\delta$.  Specifically, 
$\epsilon\equiv\epsilon(\mu,T)$ is chosen sufficiently small to satisfy 
\eqref{epsilon-condition1} and \eqref{epsilon-condition2}, while 
$\delta\equiv\delta(\mu,T)$ is defined by \eqref{eq:def-delta}.  Now set the 
matrix $\mathbf R_n(z)$ to be
\eq
\label{R-def}
\mathbf R_n(z) :=  \begin{cases} 
\begin{pmatrix} 1 & 0 \\ 0 & -2\pi i \end{pmatrix} \mathbf P_n(z) \mathbf D_+^u(z) \begin{pmatrix} 1 & 0 \\ 0 & -2\pi i \end{pmatrix}^{-1}, & \mu<\Im z<\mu+\epsilon, \\ 
\begin{pmatrix} 1 & 0 \\ 0 & -2\pi i \end{pmatrix} \mathbf P_n(z) \mathbf D_-^u(z) \begin{pmatrix} 1 & 0 \\ 0 & -2\pi i \end{pmatrix}^{-1}, & -\delta < \Im z < \mu, \\ 
\begin{pmatrix} 1 & 0 \\ 0 & -2\pi i \end{pmatrix} \mathbf P_n(z)\begin{pmatrix} 1 & 0 \\ 0 & -2\pi i \end{pmatrix}^{-1}, & \text{otherwise}.
\end{cases}
\endeq
Note that the singularities at the points $L_{n,\tau}$ are removable.  However, 
${\bf R}_n(z)$ now has jump discontinuities and satisfies the jump conditions
\eq
\mathbf R_{n+}(z) =  \mathbf R_{n-}(z) {\bf V}^{({\bf R})}(z),
\endeq
where 
\begin{equation}
{\bf V}^{({\bf R})}(z) := \begin{cases}
\begin{pmatrix} 1 & 0 \\ 0 & -2\pi i \end{pmatrix} \mathbf D^u_{+}(z)  \begin{pmatrix} 1 & 0 \\ 0 & -2\pi i \end{pmatrix}^{-1}, & z\in \realR + i(\mu+\ep), \\
\begin{pmatrix} 1 & e^{-nT(z^2-2i\mu z)/2} \\ 0 & 1 \end{pmatrix}, & z\in \realR+i\mu, \\
\begin{pmatrix} 1 & 0 \\ 0 & -2\pi i \end{pmatrix} \mathbf  D^u_{-}(z)  \begin{pmatrix} 1 & 0 \\ 0 & -2\pi i \end{pmatrix}^{-1}, & z\in \realR -i\delta,
\end{cases}
\end{equation}
and the orientation is left-to-right on $\realR+i\mu$ and $\realR-i\de$, but right-to-left on $\realR +i(\mu+\ep)$.

\subsection{The subcritical $g$-function}

A vital step in the nonlinear steepest-descent analysis is the introduction 
of an exponent $g(z)$ (traditionally called the \emph{$g$-function}) that 
in effect averages out the rapid oscillations in the jump and, after the 
subsequent lens-opening step, leaves constant jumps on one or more 
\emph{bands}.  The process of defining $g(z)$ involves determining these 
bands.  It turns out that in the subcritical case we can use a shifted 
version of the $g$-function $g_{0}(z)$ which appears in the asymptotic analysis of the rescaled monic Hermite polynomials (see 
\eqref{eq:def-g-function}).  Also recall the band endpoints $a$ and $b$ defined 
in \eqref{gamma-def}.  
We denote the potential $V(z)$ by
\eq
V(z):=\frac{Tz^2}{2}-iT\mu z.
\endeq
The function $g(z)$ satisfies the following properties.
\begin{enumerate}
\item[]{\bf Analyticity:} The function $g(z)$ is analytic for $\Im z\neq\mu$.
\item[]{\bf Normalization:}  As $z\to\infty$,
\begin{equation}
g(z) = \log(z) -\sum_{j=1}^\infty \frac{g_j}{z^j},
\end{equation}
where
\begin{equation}
g_j:= \int_{a}^{b} \frac{z^j}{j} d\nu(z)
\end{equation}
and $d\nu$ is the semi-circle law on the interval $[a,b]$, i.e.,
\eq\label{def:semi-circle}
d\nu(x+i\mu) = \frac{T}{2\pi} \sqrt{\frac{4}{T} - x^2}\,dx  \quad {\rm for} \  -\frac{2}{\sqrt{T}}<x<\frac{2}{\sqrt{T}}.
\endeq
In particular we have 
\eq
\label{g1}
g_1=i\mu.
\endeq
\item[]{\bf Variational condition:}  On the line $\Im z=\mu$,
\begin{equation} \label{eq:variational_condition}
  g_+(z) + g_-(z) - V(z) - \ell
  \begin{cases}
    = 0, & z\in\{\Im z = \mu\}\cap\{|\Re z| \le 2/\sqrt{T}\}, \\
    < 0, & z\in\{\Im z = \mu\}\cap\{|\Re z| > 2/\sqrt{T}\},
  \end{cases}
\end{equation}
where the Lagrange multiplier $\ell$ is defined in \eqref{eq:LWg-definition} and \eqref{eq:def-g-function}.

\item[]{\bf Jump condition:} For $z\in [a,b]$, $g(z)$ satisfies the jump condition
\eq\label{eq:jump_for_g}
g_+(z) - g_-(z) = 2\pi i \int_z^b d\nu(w).
\endeq
\end{enumerate}

Notice that since the measure $\nu$ has a real analytic density \eqref{def:semi-circle}, the right-hand-side of \eqref{eq:jump_for_g} extends to an analytic function in a neighborhood of the band $[a,b]$. For $z$ in this neighborhood, we will denote
\eq\label{eq:def-G}
 G(z):= g_+(z) - g_-(z).
\endeq
We can now define the matrix $\mathbf T_n(z)$ via the transformation
\eq
\label{T-def}
\mathbf T_n(z) := e^{-n\ell\sg_3/2} \mathbf R_n(z) e^{-n(g(z) - \ell/2)\sg_3}.
\endeq
It satisfies the jump conditions
\[ \mathbf T_{n+}(z) =  \mathbf T_{n-}(z) {\bf V}^{({\bf T})}(z),\]
where 
\begin{equation}
{\bf V}^{({\bf T})}(z) := \begin{cases} 
\begin{pmatrix} 1 & \displaystyle\frac{1}{2\pi in \Pi(z)}e^{-nT(z^2-2i\mu z)/2}e^{n(2g(z)-\ell)}e^{i\pi(nz-\tau)} \\ 0 & 1 \end{pmatrix}, & z\in \realR + i(\mu+\ep), \\
\begin{pmatrix} e^{-n(g_+(z) - g_-(z))} & e^{-nT(z^2-2i\mu z)/2}e^{n(g_+(z) + g_-(z)-\ell)} \\ 0 & e^{n(g_+(z) - g_-(z))} \end{pmatrix}, & z\in \realR+i\mu, \\
\begin{pmatrix} 1 & \displaystyle\frac{1}{2\pi in \Pi(z)}e^{-nT(z^2-2i\mu z)/2}e^{n(2g(z)-\ell)}e^{-i\pi(nz-\tau)} \\ 0 & 1 \end{pmatrix}, & z\in \realR -i\delta. 
\end{cases}
\end{equation}

\subsection{Opening of the lenses}

Recall the choices of $\epsilon$ and $\delta$ used to specify ${\bf R}_n(z)$ 
and define the rectangular lens regions
\eq
\Omega_+:= \left\{\left(-\frac{2}{\sqrt{T}},\frac{2}{\sqrt{T}}\right)\times(i\mu,i(\mu+\epsilon))\right\}, \quad \Omega_-:= \left\{\left(-\frac{2}{\sqrt{T}},\frac{2}{\sqrt{T}}\right)\times(i(\mu-\epsilon),i\mu)\right\}
\endeq
(see Figure \ref{fig-S-jumps}).
Now the function $g_+(z)-g_-(z)$ defined for $z\in[a,b]$ (recall we orient the 
band left-to-right) has an analytic extension to 
$\overline{\Omega_+\cup\Omega_-}$.  Adding \eqref{eq:variational_condition} and \eqref{eq:def-G}, we find that $G(z)$   can be written as
\eq
G(z) = \begin{cases} 2g(z)-V(z)-\ell, & z\in\Omega_+, \\ -2g(z)+V(z)+\ell, & z\in\Omega_-.  \end{cases}
\endeq
\begin{figure}[h]
\setlength{\unitlength}{1.5pt}
\begin{center}
\begin{picture}(100,80)(-50,-20)
\thicklines
\put(-80,65){\line(1,0){160}}
\put(-80,45){\line(1,0){160}}
\put(-50,25){\line(1,0){100}}
\multiput(-80,0)(4,0){40}{\line(1,0){2}}
\put(-80,-15){\line(1,0){160}}
\put(-50,25){\line(0,1){40}}
\put(50,25){\line(0,1){40}}
%\qbezier(-50,30)(-50,25)(-45,25)
%\qbezier(50,30)(50,25)(45,25)
\put(82,-2){$\mathbb{R}$}
\put(82,43){$\mathbb{R}+i\mu$}
\put(82,63){$\mathbb{R}+i(\mu+\epsilon)$}
\put(82,-17){$\mathbb{R}-i\delta$}
\put(-50,45){\circle*{2}}
\put(50,45){\circle*{2}}
\put(-48,47){$a$}
\put(45,47){$b$}
\put(-5,53){$\Omega_+$}
\put(-5,33){$\Omega_-$}
\put(-5,18){$\sigma_-$}
\put(0,65){\vector(-1,0){5}}
\put(62,45){\vector(1,0){5}}
\put(-68,45){\vector(1,0){5}}
\put(67,65){\vector(-1,0){5}}
\put(-63,65){\vector(-1,0){5}}
\put(-4,45){\vector(1,0){5}}
\put(-4,25){\vector(1,0){5}}
\put(-4,-15){\vector(1,0){5}}
\put(50,53){\vector(0,1){5}}
\put(-50,58){\vector(0,-1){5}}
\end{picture}
\end{center}
\caption{\label{fig-S-jumps} The lens regions $\Omega_\pm$ and the jump 
contours $\Sigma^{({\bf S})}$ along with their orientations.}
\end{figure}
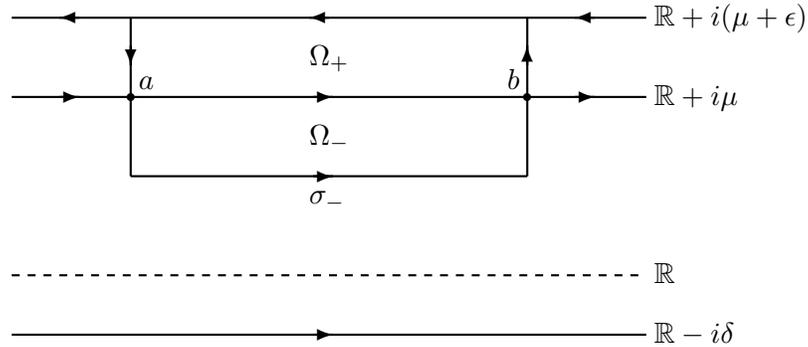

We now make the transformation
\begin{equation}\label{st2}
\mathbf S_n(z) := \begin{cases} 
\mathbf T_n(z)\bbm 1 & 0 \\ -e^{-nG(z)} & 1 \ebm, & z\in\Omega_+, \vspace{.05in} \\
\mathbf T_n(z)\bbm 1 & 0 \\ e^{nG(z)} & 1 \ebm, & z\in\Omega_-,\\
\mathbf T_n(z), & \textrm{otherwise}. \end{cases}
\end{equation}
The function ${\bf S}_n(z)$ has jumps on $\Sigma^{({\bf S})}$ (see 
Figure \ref{fig-S-jumps}) consisting of the three lines on which 
${\bf T}_n(z)$ has jumps along with the vertical line segments 
$(a+i\epsilon,a)$ and $(b,b+i\epsilon)$ as well as the contour
\eq
\sigma_-:=(a,a-i\epsilon)\cup(a-i\epsilon,b-i\epsilon)\cup(b-i\epsilon,b)
\endeq
forming part of the boundary of $\Omega_-$.
On this contour, the function $\mathbf S_n(z)$ satisfies the jump condition
\begin{equation}\label{st3}
\mathbf S_{n+}(z)=\mathbf S_{n-}(z) {\bf V}^{({\bf S})}(z),
\end{equation}
where
  \begin{equation}\label{st3a}
{\bf V}^{({\bf S})}(z):=
\begin{cases}
\bbm (1-e^{- 2i\pi(n z-\tau)})^{-1} &  \frac{e^{ nG(z)}}{1-e^{- 2\pi i( n z-\tau)}} \\ - e^{- nG(z)} & 1 \ebm, & z\in (a+i\epsilon,b+i\epsilon), \vspace{.03in} \\
\bbm 1 &  \frac{e^{n(2g(z)-\ell-V(z))}}{1-e^{- 2\pi i(nz-\tau)}} \\ 0 & 1 \ebm, & z\in (\mathbb{R}+i(\mu+\epsilon))\backslash(a+i\epsilon,b+i\epsilon), \vspace{.03in} \\
\bbm 1 & 0 \\ -e^{-nG(z)} & 1 \ebm, & z\in(a+i\epsilon,a)\cup(b,b+i\epsilon), \vspace{.03in} \\
\bbm 0 & 1 \\ -1 & 0 \ebm, & z\in (a,b),  \vspace{.03in} \\
\bbm 1 & e^{n(g_+(z)+g_-(z)-\ell-V(z))} \\ 0 & 1 \ebm, & z\in (\mathbb{R}+i\mu)\backslash(a,b), \vspace{.03in} \\
\bbm 1 &  0 \\ e^{nG(z)} & 1 \ebm, & z\in \sigma_-, \vspace{.03in} \\
\bbm 1 & -\frac{e^{n(2g(z)-V(z)-\ell)}}{1-e^{2\pi i(nz-\tau)}} \\ 0 & 1 \ebm, & z\in \realR -i\delta. 
\end{cases}
\end{equation}
The orientation of each contour is given in Figure \ref{fig-S-jumps}.
We claim that all of the jumps outside of the band $[a,b]$ are exponentially 
close to the identity matrix as $n\to\infty$.  The analysis on the jumps close 
to $\mathbb{R}+i\mu$ essentially follows from the zero-drift case 
\cite{LiechtyW:2016}.  However, on $\mathbb{R}-i\delta$ a different argument 
is required.  We begin with the following lemma.
\begin{lem}\label{lem:horizontal-line}
Fix $\delta$ to be
\eq\label{eq:def-delta}
\de\equiv\de(\mu,T):= -\mu + \frac{2}{T}\sqrt{\pi^2 - T},
\endeq
and define $\mu_c(T)$ by \eqref{mu-crit}.  Note that if $0<\mu<\mu_c$ then $\delta$ 
is positive.  Also define the phase function 
\eq\label{eq:def-phi}
\phi(z):= 2g(z) -V(z) - \ell -2\pi i z.
\end{equation}
Fix $T\in(0,\pi^2)$.  Then for all $0<\mu<\mu_c(T)$, we have 
$\Re \phi(z)\leq \Re \phi(-i\delta)<0$ for all $z\in \realR - i\delta$.
\end{lem}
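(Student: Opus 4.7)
The plan is to show that $\Re \phi$ attains its unique maximum on $\mathbb{R} - i\delta$ at $z = -i\delta$ and to evaluate this maximum as $2\pi(\mu - \mu_c(T))$. The starting point is the derivative identity
\eq
\phi'(z) = 2g'(z) - V'(z) - 2\pi i = -T R(z) - 2\pi i,
\endeq
which follows from the Cauchy transform of the semicircle density \eqref{def:semi-circle}, giving $g'(z) = g_0'(z - i\mu) = \tfrac{T}{2}\bigl((z - i\mu) - R(z)\bigr)$, together with $V'(z) = Tz - iT\mu$. The definition \eqref{eq:def-delta} of $\delta$ is precisely what is needed for $-i\delta$ to be a stationary point: at that point $(z - i\mu)^2 - 4/T = -4\pi^2/T^2$, and tracking the branch of $R$ (normalized by $R(z) \sim z$ as $z \to \infty$) down the negative imaginary axis selects $R(-i\delta) = -2\pi i/T$, whence $\phi'(-i\delta) = 0$.

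To verify that $-i\delta$ is the \emph{only} critical point of $\Re \phi$ on the horizontal line, I parametrize $z = x - i\delta$ and use
\eq
\frac{d}{dx}\Re\phi(x - i\delta) = \Re\phi'(x - i\delta) = -T \Re R(x - i\delta).
\endeq
Squaring gives $R(x - i\delta)^2 = x^2 - 4\pi^2/T^2 - 4ix\sqrt{\pi^2 - T}/T$. Since $R$ is purely imaginary exactly when $R^2$ is a non-positive real, and the imaginary part of $R^2$ vanishes only at $x = 0$ (using $T < \pi^2$), the derivative vanishes only at the origin. From $R(z) \sim z$ at infinity, $\Re R(x - i\delta) \sim x$ for $|x|$ large, so $\Re R$ is negative on $(-\infty, 0)$ and positive on $(0, \infty)$. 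Consequently $\Re \phi(x - i\delta)$ strictly increases on $(-\infty, 0)$ and strictly decreases on $(0, \infty)$, establishing that the maximum is attained uniquely at $x = 0$.

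Finally, for the closed-form value, I substitute into $\phi = 2g - V - \ell - 2\pi i z$. Using \eqref{eq:LWg-definition} with $w = -i(\mu + \delta) = -2i\sqrt{\pi^2 - T}/T$ and the branch $\sqrt{w^2 - 4/T} = -2\pi i/T$, I compute $w - \sqrt{w^2 - 4/T} = 2i(\pi - \sqrt{\pi^2 - T})/T$, which yields
\eq
\Re g(-i\delta) = \frac{\pi\sqrt{\pi^2 - T} - (\pi^2 - T)}{T} - \log(\pi - \sqrt{\pi^2 - T}) - \frac{1}{2}.
\endeq
Combining with $-V(-i\delta) - \ell = \tfrac{T}{2}(\mu + \delta)^2 + 1 + \log T = 2(\pi^2 - T)/T + 1 + \log T$ (via completion of squares) and $-2\pi \delta = 2\pi\mu - 4\pi\sqrt{\pi^2 - T}/T$, the middle terms cancel to give
\eq
\Re \phi(-i\delta) = -\frac{2\pi\sqrt{\pi^2 - T}}{T} - 2\log(\pi - \sqrt{\pi^2 - T}) + \log T + 2\pi \mu = 2\pi\bigl(\mu - \mu_c(T)\bigr),
\endeq
where the last equality is exactly the definition \eqref{mu-crit} of $\mu_c$. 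For $\mu < \mu_c$ this is strictly negative, completing the proof. The principal obstacle is branch bookkeeping for $R$ and the logarithm in $g_0$; once the branches are pinned down, the completion-of-squares identity $\tfrac{T}{2}(\mu + \delta)^2 = 2(\pi^2 - T)/T$ and the precise form of \eqref{mu-crit} make the final cancellations automatic.
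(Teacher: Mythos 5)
Your proposal is correct and follows essentially the same route as the paper: compute $\phi'(z)=-TR(z)-2\pi i$ along the horizontal line, show $\Re\phi$ has its unique maximum at the point on the imaginary axis via the sign of $\Re R$, and then evaluate $\Re\phi(-i\delta)$ explicitly (your closed form $2\pi(\mu-\mu_c)$ is exactly the paper's expression rewritten, so negativity for $\mu<\mu_c$ follows identically).
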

\begin{proof}
We first prove that for all fixed $y\in \realR\setminus\{\mu\}$, the real part of $\phi(x+iy)$ attains its maximum at $x=0$. Using \eqref{eq:LWg-definition}--\eqref{eq:def-g-function} and \eqref{eq:def-phi}, after some simplifications we find that
\begin{equation}\label{eq:phi-deriv}
\frac{\d}{\d x} \phi(x-iy) = -T\sqrt{\left(x+i(y-\mu)\right)^2 - \frac{4}{T}}-2\pi i.
\end{equation}
The critical point of $\Re\phi(x+iy)$ is where the radicand is a negative number, which clearly occurs only at $x=0$. To see that this critical point is the location of a global maximum (in $x$ for fixed $y$), note that the branch of the square root in \eqref{eq:phi-deriv} is such that
\eq
\sqrt{(z-i\mu)^2 - \frac{4}{T}} = z+\bigO(1) \quad \textrm{as} \ z\to\infty.
\endeq
It follows that for large positive $x$, $\Re \frac{\d}{\d x} \phi(x+iy)  < 0$; and for large negative $x$, $\Re \frac{\d}{\d x} \phi(x+iy)  > 0$. Since there is only one critical point of $\Re\phi(x+iy)$ at $x=0$ it must be that $\Re \frac{\d}{\d x} \phi(x+iy)  < 0$ for all positive $x$ and $\Re \frac{\d}{\d x} \phi(x+iy)  > 0$ for all negative $x$, thus the maximum is attained at $x=0$.

To complete the proof of Lemma \ref{lem:horizontal-line} we must show that $\Re\phi(-i\delta)<0$, with $\delta$ given in \eqref{eq:def-delta}. This can be checked directly:
\eq
 \phi(-i\delta) = -\frac{2\pi \sqrt{\pi^2-T}}{T} + 2\pi \mu +\log(T) - 2\log(\pi-\sqrt{\pi^2-T})-i\pi,
 \endeq
 which clearly has negative real part for all $\mu<\mu_c(T)$.
\end{proof}

Let $\mathbb{D}_a$ and $\mathbb{D}_b$ be small fixed circular neighborhoods 
centered at $a$ and $b$, respectively.  The radii are chosen small enough so 
the closure of the disks do not intersect each other or 
$\mathbb{R}+i(\mu+\epsilon)$.

\begin{prop}
\label{prop-S-jump-bounds}
Fix $T\in(0,\pi^2)$ and $\mu\in[0,\mu_c(T))$.  Then there exists a constant 
$c>0$, independent of $n$ and $z$, such that as $n\to\infty$
\eq
{\bf V}^{({\bf S})}(z) = \mathcal{O}(e^{-cn}), \quad z\in\Sigma^{({\bf S})}\backslash([a,b]\cup\mathbb{D}_a\cup\mathbb{D}_b).
\endeq
\end{prop}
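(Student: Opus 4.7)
The plan is to bound each of the seven jump matrices listed in \eqref{st3a} separately; the arguments fall into three categories depending on which analytic fact controls the decay.

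For the three segments produced by the lens opening—namely $(a+i\epsilon,b+i\epsilon)$, $\sigma_-$, and the short vertical segments $(a+i\epsilon,a)\cup(b,b+i\epsilon)$—I would use the fact that $\Re G(z)>0$ strictly inside $\Omega_+$ and $\Re G(z)<0$ strictly inside $\Omega_-$. Expanding $G$ off the band via $G'(x+i\mu)=-iT\sqrt{4/T-x^2}$ (which is purely imaginary) yields $\Re G(x+i(\mu\pm\eta))=\pm T\eta\sqrt{4/T-x^2}+\bigO(\eta^2)$ for small $\eta>0$; the conditions on $\epsilon$ referenced as \eqref{epsilon-condition1}--\eqref{epsilon-condition2} are chosen precisely so this inequality is uniform, and excising $\mathbb{D}_a$ and $\mathbb{D}_b$ handles the $3/2$-power vanishing of $\Re G$ at the band endpoints. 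For the two horizontal pieces $(\realR+i\mu)\setminus[a,b]$ and $(\realR+i(\mu+\ep))\setminus(a+i\ep,b+i\ep)$, the only nontrivial entry is the $(1,2)$ entry, whose exponential decay follows from the strict variational inequality \eqref{eq:variational_condition} and, on the upper line, from its analytic extension together with the lower bound $|1-e^{-2\pi i(nz-\tau)}|\gtrsim e^{2\pi n(\mu+\epsilon)}$ coming from $\Im z>0$. Both of these arguments closely parallel the zero-drift treatment of \cite{LiechtyW:2016}.

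The genuinely new step is the bound on $\realR-i\delta$. The $(1,2)$ entry is $-e^{n(2g(z)-V(z)-\ell)}/(1-e^{2\pi i(nz-\tau)})$; writing $2g-V-\ell=\phi+2\pi iz$ with $\phi$ as in \eqref{eq:def-phi}, the numerator has modulus $e^{n\Re\phi(z)+2\pi n\delta}$, while the denominator satisfies $|1-e^{2\pi i(nz-\tau)}|\geq\tfrac12 e^{2\pi n\delta}$ for $n$ large, so these prefactors cancel and the whole entry is bounded by $2e^{n\Re\phi(z)}$. Lemma \ref{lem:horizontal-line} then supplies $\Re\phi(z)\leq\Re\phi(-i\delta)<0$ uniformly on $\realR-i\delta$, provided $\mu<\mu_c(T)$ and $\delta$ is as in \eqref{eq:def-delta}.

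This last contour is the principal obstacle. The first six pieces are straightforward modifications of the real-weight analysis, but the jump on $\realR-i\delta$ has no analogue there and encodes both the subcritical hypothesis $\mu<\mu_c(T)$ and the particular choice of $\delta$. Lemma \ref{lem:horizontal-line} was formulated exactly for this purpose; once it is granted, the proof reduces to a routine case-by-case verification across the seven pieces of $\Sigma^{({\bf S})}$.
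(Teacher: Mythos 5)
Your overall strategy coincides with the paper's: a case-by-case check of the pieces of \eqref{st3a}, with the variational inequality \eqref{eq:variational_condition} handling the horizontal rays at heights $\mu$ and $\mu+\epsilon$, the expansion \eqref{eq:G-expansion} of $G$ off the band handling the lens-related contours, and Lemma \ref{lem:horizontal-line} supplying the genuinely new estimate on $\realR-i\delta$. Your treatment of that last contour (cancelling the factor $e^{2\pi n\delta}$ between the numerator and the denominator $1-e^{2\pi i(nz-\tau)}$ and then invoking $\Re\phi(z)\le\Re\phi(-i\delta)<0$) is exactly what the paper does, spelled out in somewhat more detail, and you correctly identify it as the step where $\mu<\mu_c(T)$ and the choice \eqref{eq:def-delta} of $\delta$ enter.

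There is, however, a flaw in how you dispose of the segment $(a+i\epsilon,b+i\epsilon)$. You claim its jump is controlled by ``$\Re G>0$ strictly inside $\Omega_+$,'' but by \eqref{st3a} the $(1,2)$-entry there is $e^{nG(z)}/(1-e^{-2\pi i(nz-\tau)})$, and $\Re G>0$ makes the numerator grow exponentially; the sign condition only takes care of the $(2,1)$-entry $-e^{-nG(z)}$. Decay of the $(1,2)$-entry requires the very denominator bound $|1-e^{-2\pi i(nz-\tau)}|\sim e^{2\pi n(\mu+\epsilon)}$ that you reserve for the rays $(\realR+i(\mu+\epsilon))\setminus(a+i\epsilon,b+i\epsilon)$, combined with the inequality $\Re\left[G(x+i(\mu+\epsilon))+2\pi i(x+i(\mu+\epsilon))\right]<0$. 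By \eqref{eq:G-expansion} this amounts to $2\pi\epsilon\left(\tfrac{d\nu}{d\sigma}-1\right)-2\pi\mu+\mathcal{O}(\epsilon^2)<0$, which holds because the semicircle density \eqref{def:semi-circle} is bounded by $\sqrt{T}/\pi<1$; this is precisely \eqref{epsilon-condition2}, which you cite but mischaracterize as merely making the sign of $\Re G$ uniform. In particular, the subcritical hypothesis $T<\pi^2$ enters on this contour as well (through the density being strictly less than $1$), not only on $\realR-i\delta$ as your closing paragraph suggests. The repair is short, since you already have every ingredient in hand, but as written the argument for this entry does not close.
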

\begin{proof}
The variational conditions \eqref{eq:variational_condition} immediately imply this for the jumps on $(\mathbb{R}+i\mu)\backslash(a,b)$ and $(\mathbb{R}+i(\mu+\epsilon))\backslash(a+i\epsilon,b+i\epsilon)$. Now consider the lens boundaries $(a+i\epsilon,b+i\epsilon)$ and $\sigma_-$.
According to \eqref{eq:jump_for_g} and \eqref{eq:def-G} we have that for all $x\in[-\frac{2}{\sqrt{T}}, \frac{2}{\sqrt{T}}],$
\eq\label{eq:G-expansion}
G(x+i(\mu\pm \epsilon)) = 2\pi i \int_{x+i\mu}^b d\nu(w) \pm2\pi \ep \frac{d\nu}{d\sigma}\bigg(x+i\mu\bigg)+\bigO(\epsilon^2),
\endeq
where $\sigma$ is the Lebesgue measure on the interval $[a,b]$. 
The density $\frac{d\nu}{d\sigma}$ given in \eqref{def:semi-circle} is positive on the interior of $(a,b)$. Thus, 
$\epsilon$ may be chosen small enough so that
\eq 
\label{epsilon-condition1}
\pm \Re G(x+i(\mu\pm\epsilon))> 0,
\endeq
which shows that the jump on $\sigma_-$ is exponentially close to the identity matrix as $n\to\infty$,
and that the jump on $(a+i\epsilon,b+i\epsilon)$ is exponentially close to the identity matrix except perhaps in the $(12)$-entry. To examine this entry we consider
\eq 
 G(x+i(\mu+\epsilon))+2\pi i(x+i(\mu+\epsilon)) = 2\pi i\left(x+ \int_{x+i\mu}^b d\nu(w)\right) + 2\pi \ep\left( \frac{d\nu}{d\sigma}\bigg(x+i\mu\bigg)-1\right)+\bigO(\epsilon^2) 
 \endeq
for all $x\in \left[-\frac{2}{\sqrt{T}}, \frac{2}{\sqrt{T}}\right].$  Since we are in the subcritical regime $T<\pi^2$, the density $\frac{d\nu}{d\sigma}(x+i\mu)$ is strictly less than 1, see \eqref{def:semi-circle}. Thus $\ep$ may be chosen small enough so that
\eq 
\label{epsilon-condition2}
\Re [G(x+i(\mu\pm\epsilon))+2\pi i(x+i(\mu+\epsilon))] < 0,
\endeq
for all $x\in [-\frac{2}{\sqrt{T}}, \frac{2}{\sqrt{T}}].$ This proves that the jump on the upper lens is exponentially close to the identity function as $n\to\infty$.

Finally, the jump on the horizontal line $\realR - i\delta$ 
is exponentially close to the identity matrix as $n\to\infty$ since 
$\Re\phi(z)<0$ for all $z\in \realR-i\de$ by Lemma \ref{lem:horizontal-line}. 
\end{proof}

\section{Analysis in the subcritical regime}
\label{sec-subcrit-winding}

\subsection{The subcritical outer model problem}

While it is not possible to solve for ${\bf S}_n(z)$ exactly, it is true that 
the jumps ${\bf V}^{(\bf S)}(z)$ decay to the identity as $n\to\infty$ except on the 
band $[a,b]$.  It is therefore reasonable to expect that ${\bf S}_n(z)$ is 
approximated by the function (the outer model) with only this jump.  The 
leading-order error in this approximation can be traced to the other jumps in 
neighborhoods of the band endpoints $a$ and $b$, which decay but 
subexponentially.  In order to control this error it is necessary to 
construct functions in these neighborhoods (the local models) that satisfy 
exactly the same jumps as ${\bf S}_n(z)$.  A global model solution will be 
built from the outer and inner model solutions.  These steps are standard 
(see, for instance, \cite{DeiftKMVZ:1999,Liechty:2012}).

We denote the subcritical outer model solution by ${\bf M}_0(z)$.  The 
subscript 0 is because in the Hermite regime we will use a modified outer model 
solution ${\bf M}_k(z)$.
\begin{rhp}[Subcritical outer model problem]
\label{rhp-outer-subcrit}
Determine a $2\times 2$ matrix-valued function ${\bf M}_0^{(\rm{out})}(z)$ 
satisfying:
\begin{itemize}
\item[]{\bf Analyticity:} ${\bf M}_0^{(\rm{out})}(z)$ is analytic in $z$ 
off $[a,b]$ and is H\"older continuous up to $(a,b)$ with at worst quarter-root 
singularities at $a$ and $b$.  
\item[]{\bf Normalization:}  
\eq
\lim_{z\to\infty}{\bf M}_0^{(\rm{out})}(z) = \mathbb{I}.
\endeq
\item[]{\bf Jump condition:}  Orienting $[a,b]$ left-to-right, the solution 
satisfies 
\eq
\label{outer-band-jump}
{\bf M}_{0+}^{(\rm{out})}(z)={\bf M}_{0-}^{(\rm{out})}(z)\bbm 0 & 1 \\ -1 & 0 \ebm, \quad z\in[a,b].
\endeq
\end{itemize}
\end{rhp}
Recall $\gamma(z)$ defined in \eqref{gamma-def}.  Then
\eq
\label{M0-out}
{\bf M}_0^{(\text{out})}(z):= \bbm \displaystyle \frac{\gamma(z)+\gamma(z)^{-1}}{2} & \displaystyle \frac{\gamma(z)-\gamma(z)^{-1}}{-2i} \\ \displaystyle \frac{\gamma(z)-\gamma(z)^{-1}}{2i} & \displaystyle \frac{\gamma(z)+\gamma(z)^{-1}}{2} \ebm
\endeq
is the unique solution to Riemann--Hilbert Problem \ref{rhp-outer-subcrit}.  We 
note that $\det{\bf M}_0^{(\text{out})}(z)\equiv 1$.

\subsection{Airy parametrices}
Recall the small disks $\mathbb{D}_a$ and $\mathbb{D}_b$ introduced in 
Proposition \ref{prop-S-jump-bounds}.  We seek a local parametrix 
${\bf M}_0^{(a)}(z)$ 
defined in $\mathbb{D}_a$ with the following properties.
\begin{rhp}[The Airy parametrix Riemann--Hilbert problem]
\label{rhp-airy}
Determine 
a $2\times 2$ matrix-valued function ${\bf M}_0^{(a)}(z)$ satisfying:
\begin{itemize}
\item[]{\bf Analyticity:} ${\bf M}_0^{(a)}(z)$ is analytic in 
$\mathbb{D}_a\backslash\Sigma^{(\bf{S})}$.  In each wedge the solution can be 
analytically continued into a larger wedge, and is H\"older continuous up to 
the boundary in a neighborhood of $z=a$.
\item[]{\bf Normalization:}  
\eq
{\bf M}_0^{(a)}(z) = {\bf M}_0^{(\rm{out})}(z)(\mathbb{I}+\mathcal{O}(n^{-1})), \quad z\in\partial\mathbb{D}_a.
\endeq
\item[]{\bf Jump condition:}  For $z\in\mathbb{D}_a\cap\Sigma^{({\bf S})}$, 
${\bf M}_0^{(a)}(z)$ satisfies 
${\bf M}_{0+}^{(a)}(z) = {\bf M}_{0-}^{(a)}(z){\bf V}^{({\bf S})}(z)$.
\end{itemize}
\end{rhp}
The solution to this problem is standard (see, for example, 
\cite{DeiftKMVZ:1999,BleherL:2011,BuckinghamM:2014}) and is constructed from 
Airy functions.  
For our purposes it is sufficient to note that such a parametrix exists (and 
is unique), as the explicit form for the solution is only necessary for 
computing terms of $\mathcal{O}(n^{-1})$.  The corresponding Riemann--Hilbert 
problem for ${\bf M}_0^{(b)}(z)$ is obtained simply by replacing $a\to b$, 
and it also has a unique solution.

\subsection{The subcritical error problem}

The subcritical global model is
\eq
\label{M0-def}
{\bf M}^{(0)}(z):=\begin{cases} {\bf M}_0^\text{(out)}(z), & z\in\mathbb{C}\backslash(\mathbb{D}_a\cup\mathbb{D}_b), \\ {\bf M}_0^{(a)}(z), & z\in \mathbb{D}_a, \\ {\bf M}_0^{(b)}(z), & z\in \mathbb{D}_b.  \end{cases}
\endeq
The error function 
\eq
\label{X-subcrit}
{\bf X}_n(z) := {\bf S}_n(z){\bf M}^{(0)}(z)^{-1}
\endeq
measures how well the global model approximates the desired solution 
${\bf S}_n(z)$.
The function $\mathbf X_n(z)$ satisfies a RHP with jumps on the contour 
$\Sg^{({\bf X})}$ consisting of the circles $\partial\mathbb{D}_a$ and 
$\partial\mathbb{D}_b$, oriented counterclockwise, together with the parts of 
$\Sigma^{({\bf S})} \setminus [a,b]$ that lie outside of the disks 
$\mathbb{D}_a$ and $\mathbb{D}_b$.  See Figure \ref{fig-subcrit-error-jumps}.
By Proposition \ref{prop-S-jump-bounds} and the normalization condition in 
Riemann--Hilbert Problem \ref{rhp-airy}, the jumps for ${\bf X}_n(z)$ are 
uniformly within $\mathcal{O}(n^{-1})$ of the 
identity matrix, and $\mathbf X_n(\infty)=\mathbb{I}$.  
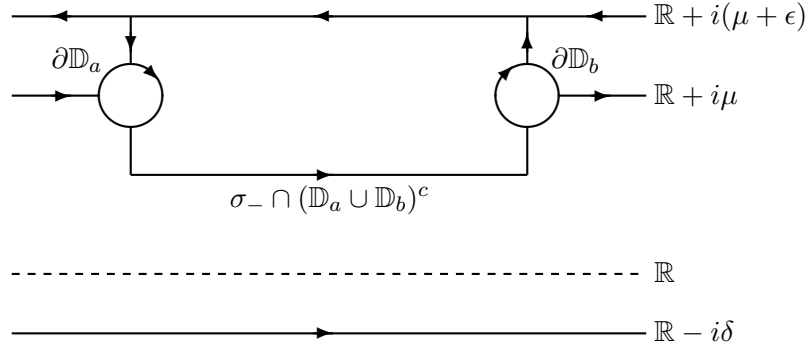
\begin{figure}[h]
\setlength{\unitlength}{1.5pt}
\begin{center}
\begin{picture}(100,80)(-50,-20)
\thicklines
\put(-80,65){\line(1,0){160}}
\put(-80,45){\line(1,0){22}}
\put(58,45){\line(1,0){22}}
\put(-50,25){\line(1,0){100}}
\multiput(-80,0)(4,0){40}{\line(1,0){2}}
\put(-80,-15){\line(1,0){160}}
\put(-50,25){\line(0,1){12}}
\put(-50,53){\line(0,1){12}}
\put(50,25){\line(0,1){12}}
\put(50,53){\line(0,1){12}}
\put(82,-2){$\mathbb{R}$}
\put(82,43){$\mathbb{R}+i\mu$}
\put(82,63){$\mathbb{R}+i(\mu+\epsilon)$}
\put(82,-17){$\mathbb{R}-i\delta$}
\put(-50,45){\circle{15}}
\put(50,45){\circle{15}}
\put(-70,52){$\partial\mathbb{D}_a$}
\put(56,52){$\partial\mathbb{D}_b$}
\put(-25,17){$\sigma_-\cap(\mathbb{D}_a\cup\mathbb{D}_b)^c$}
\put(1,65){\vector(-1,0){5}}
\put(66,45){\vector(1,0){5}}
\put(-70,45){\vector(1,0){5}}
\put(70,65){\vector(-1,0){5}}
\put(-65,65){\vector(-1,0){5}}
\put(-4,25){\vector(1,0){5}}
\put(-4,-15){\vector(1,0){5}}
\put(50,56){\vector(0,1){5}}
\put(-50,61){\vector(0,-1){5}}
\put(-44,50){\vector(1,-1){1}}
\put(45.5,51.5){\vector(1,1){1}}
\end{picture}
\end{center}
\caption{\label{fig-subcrit-error-jumps} The jump 
contours $\Sigma^{({\bf X})}$ along with their orientations.}
\end{figure}
The solution to the Riemann--Hilbert problem for ${\bf X}_n(z)$ is given 
explicitly in terms of a Neumann series.   $\mathbf X_n(z)$ satisfies 
\begin{equation}\label{tt19}
\mathbf X_n(z) = \mathbb{I} + \bigO\left(\frac{1}{n(|z|+1)}\right) \quad  \textrm{as} \quad n \to \infty
\end{equation}
uniformly for $z\in \C \setminus \Sigma^{({\bf X})}$, see e.g. \cite{BleherL:2014, DeiftKMVZ:1999}.

\subsection{Winding numbers in the subcritical regime}
\label{subsec-subcrit-winding}
We now unravel the steps of the steepest-descent analysis to recover 
$c^{(T,\mu,\tau)}_{n,n,n-1}$, which is encoded in the matrix 
$\mathbf P_{n,1}$ (recall \eqref{IP2}). We first prove the following lemma, which is the same as Theorem \ref{thm-subcrit-winding}, but with a weaker error, which is subsequently improved.
\begin{lem}
\label{lemma-subcrit-winding-algebraic-error}
Fix a return time $T\in(0,\pi^2)$ and a drift $\mu$ such that 
$|\mu|<\mu_c(T)$, as defined in \eqref{mu-crit}.  Then 
\eq
\Prob(\mathcal{W}_n(T,\mu) = \om) = \begin{cases} \displaystyle 1 + \mathcal{O}\left(\frac{1}{n}\right), & \omega=0, \\ \displaystyle \mathcal{O}\left(\frac{1}{n}\right), & \omega\neq 0.\end{cases}
\endeq
\end{lem}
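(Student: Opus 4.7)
My plan is to read off the leading coefficient $[\mathbf{P}_{n,1}]_{11}$, which equals $c^{(T,\mu,\tau)}_{n,n,n-1}$ by \eqref{IP5a}, from the solution of the error Riemann--Hilbert problem, substitute into the representation in Lemma \ref{winding-probs-ito-P}, and then carry out the $\tau$-integral against $e^{-2\pi i\omega\tau}$.

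First I would unravel the chain $\mathbf{P}_n \to \mathbf{R}_n \to \mathbf{T}_n \to \mathbf{S}_n = \mathbf{X}_n \mathbf{M}^{(0)}$ in the region $\Im z > \mu + \epsilon$. There, ${\bf R}_n$ is a conjugation of ${\bf P}_n$ by the constant diagonal matrix $\mathrm{diag}(1,-2\pi i)$, and ${\bf S}_n = {\bf T}_n$ since the lens contours are bounded. Because diagonal conjugation and the factor $e^{-n(g(z)-\ell/2)\sg_3}$ do not mix entries, this gives the clean identity
\eq
[\mathbf{P}_n(z)]_{11} = e^{ng(z)} [\mathbf{X}_n(z)\mathbf{M}^{(0)}(z)]_{11}
\endeq
for $z$ of large modulus, and in particular as $z\to\infty$.

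Next I would extract the coefficient of $z^{-1}$ in $[\mathbf{P}_n(z)]_{11} z^{-n}$. Using $g(z) = \log z - g_1/z + \bigO(z^{-2})$ with $g_1 = i\mu$ from \eqref{g1}, we have
\eq
e^{ng(z)} z^{-n} = 1 - \frac{ni\mu}{z} + \bigO(z^{-2}).
\endeq
The outer parametrix satisfies $\mathbf{M}^{(0)}(z) = \mathbb{I} + \mathbf{M}_1^{(0)}/z + \bigO(z^{-2})$, and a direct expansion of $\gamma(z) = ((z-a)/(z-b))^{1/4}$ shows that $[\mathbf{M}_1^{(0)}]_{11} = 0$ (the off-diagonal $1/z$ coefficients of $\mathbf{M}^{(0)}$ are nonzero but play no role in the $(11)$ entry at order $1/z$). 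Combining with the Neumann-series estimate \eqref{tt19}, which gives $\mathbf{X}_n(z) = \mathbb{I} + \bigO(n^{-1}/(|z|+1))$ and hence $[\mathbf{X}_n(z)]_{11} = 1 + \bigO(n^{-1})/z + \bigO(z^{-2})$ with $\bigO(n^{-1})$ constant, I conclude
\eq
[\mathbf{P}_{n,1}]_{11} = -ni\mu + \bigO(n^{-1}),
\endeq
uniformly in $\tau\in[0,1]$.

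The last step is bookkeeping. Plugging this and \eqref{IP5a} into Lemma \ref{winding-probs-ito-P}, the inner integrand is $inT\mu + Tc^{(T,\mu,v)}_{n,n,n-1} = \bigO(n^{-1})$, uniformly in $v\in[\hsgn{n},\tau]\subset[0,1]$, so the exponential factor equals $1 + \bigO(n^{-1})$. Then
\eq
\Prob(\mathcal{W}_n(T,\mu)=\om) = e^{2\pi i\om\hsgn{n}} \int_0^1 e^{-2\pi i\om\tau}\,d\tau + \bigO(n^{-1}),
\endeq
and the orthogonality $\int_0^1 e^{-2\pi i\omega\tau}d\tau = \de_{\om,0}$ produces the dichotomy in the statement.

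The main thing to check carefully, rather than a genuine obstacle, is that every estimate is \emph{uniform in} $\tau\in[0,1]$: the parameter $\tau$ enters only through the lattice $L_{n,\tau}$ and the bounded oscillatory factors $e^{\pm i\pi(nz-\tau)}$ appearing in the interpolation matrices $\mathbf{D}^u_\pm$, so the jump bounds in Proposition \ref{prop-S-jump-bounds} and the small-norm estimate \eqref{tt19} are uniform in $\tau$ with constants depending only on $T$ and $\mu$. Once that uniformity is verified, the argument above closes and produces the $\bigO(n^{-1})$ error; upgrading this to the exponentially small error claimed in Theorem \ref{thm-subcrit-winding} will require a separate refinement using the fact that all the omitted jumps are actually exponentially close to the identity away from $\mathbb{D}_a\cup\mathbb{D}_b$.
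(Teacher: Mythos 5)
Your proposal is correct and follows essentially the same route as the paper: unravel the transformations to get $[\mathbf P_n(z)]_{11}=e^{ng(z)}[\mathbf X_n(z)\mathbf M^{(0)}(z)]_{11}$ near infinity, use $g_1=i\mu$, $[\mathbf M_1]_{11}=0$, and the small-norm bound \eqref{tt19} to conclude $c^{(T,\mu,\tau)}_{n,n,n-1}=[\mathbf P_{n,1}]_{11}=-in\mu+\bigO(n^{-1})$ uniformly in $\tau$, then feed this into Proposition \ref{Hankel-integral-prop} and \eqref{eq:total_offset_formula} and integrate in $\tau$. Your added remark about uniformity in $\tau$ of the jump estimates is the right point to check and is consistent with what the paper implicitly uses.
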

\begin{proof}
For $z\in\mathbb{C}\backslash\{z|-\delta\leq \Im z \leq \mu+\epsilon\}$ we have 
\begin{equation}
\label{P-ito-XM-subcrit}
\mathbf P_n(z) =\bbm 1 & 0 \\ 0 & -2\pi i \ebm^{-1} e^{n\ell\sg_3/2} \mathbf X_n(z) \mathbf M^{(0)}(z) e^{n(g(z) - \ell/2)\sg_3} \bbm 1 & 0 \\ 0 & -2\pi i \ebm.
\end{equation}
Expanding at $z=\infty$ gives
\eq
\label{P-at-infinity}
\begin{split}
\mathbb{I}+\frac{\mathbf P_{n,1}}{z}+\mathcal{O}\left(\frac{1}{z^2}\right) = 
 & \bbm 1 & 0 \\ 0 & -2\pi i \ebm^{-1} e^{n\ell\sg_3/2} \left(\mathbb{I}+\frac{\mathbf X_{n,1}}{z}+\mathcal{O}\left(\frac{1}{z^2}\right)\right) \left(\mathbb{I}+\frac{\mathbf M_1}{z}+\mathcal{O}\left(\frac{1}{z^2}\right)\right) \\ 
 &  \times \exp\left(-n\left(\frac{g_1}{z}+\mathcal{O}\left(\frac{1}{z^2}\right)\right)\sg_3\right) e^{-n\ell\sg_3/2} \bbm 1 & 0 \\ 0 & -2\pi i \ebm.
\end{split}
\endeq
It is easy to check that $[\mathbf M_1]_{11} = 0$. We thus have that
\eq
\label{cnnnm1}
c^{(T,\mu,\tau)}_{n,n,n-1}=[\mathbf P_{n,1}]_{11} = -ng_1 + [\mathbf X_{n,1}]_{11} + [\mathbf M_1]_{11} = -in\mu+\bigO(n^{-1}),
\endeq
where we have used \eqref{IP5a} and \eqref{g1}.  Together, Proposition 
\ref{Hankel-integral-prop}, \eqref{IP5a}, and \eqref{cnnnm1} show 
\eq
\log \left(\frac{\Hankel_n(T,\mu,\tau) }{\Hankel_n(T,\mu,\hsgn{n})}\right) = \int_{\hsgn{n}}^\tau \left(inT\mu + T [\mathbf P_{n,1}(T,\mu,v)]_{11}\right)\,dv = \mathcal{O}\left(n^{-1}\right).
\endeq
Thus
\eq
\label{Hn-over-Hn}
\frac{\Hankel_n(T,\mu,\tau) }{\Hankel_n(T,\mu,\hsgn{n})} = 1 + \mathcal{O}\left(n^{-1}\right).
\endeq
From \eqref{eq:total_offset_formula},
\eq
\Prob(\mathcal{W}_n(T,\mu) = \om) = e^{2\pi i\hsgn{n}\omega} \int_0^1  e^{-2\pi i\tau\omega}d\tau + \mathcal{O}\left(n^{-1}\right) = \begin{cases} 1 + \mathcal{O}\left(n^{-1}\right), & \omega=0, \\ \mathcal{O}\left(n^{-1}\right), & \omega\neq 0,\end{cases}
\endeq
which establishes the lemma.
\end{proof}
To prove Theorem \ref{thm-subcrit-winding} we only need to improve the 
error term from $\mathcal{O}(n^{-1})$ to $\mathcal{O}(e^{-cn})$.  
\begin{proof}[Proof of Theorem \ref{thm-subcrit-winding} (Subcritical winding numbers)]
Let $H_{n,k}^{(T,\mu)}(z)$ be the monic polynomial of degree $k$ defined by the orthogonality condition
\eq
\int_{\realR+i\mu} H_{n,k}^{(T,\mu)}(z)H_{n,j}^{(T,\mu)}(z)e^{-\frac{nT}{2}(z^2-2i\mu z)}\,dz = h_{n,k}^{(T,\mu)} \de_{jk}.
\endeq
Notice that these are the same as the polynomials defined in 
\eqref{def:continuous_OPs}, even though the contour of integration is 
different (it can be deformed using Cauchy's theorem). These polynomials are 
encoded in the following Riemann--Hilbert problem.
\begin{rhp}[Shifted Hermite polynomial problem]
Find a $2\times 2$ matrix-valued 
function $\mathbf P_n^{(c)}(z)$ with the 
following properties:
\begin{itemize}
\item[]{\bf Analyticity:} $\mathbf P_n^{(c)}(z)$ is analytic for $z\in\C\setminus (\realR+i\mu)$.
\item[]{\bf Normalization:} As $z\to\infty$, $\mathbf P_n^{(c)}(z)$ admits the asymptotic 
expansion
\begin{equation} 
\mathbf P^{(c)}_n(z) = \left(\mathbb{I} + \mathcal{O}\left(\frac{1}{z}\right) \right)
\begin{pmatrix}
z^n & 0 \\
0 & z^{-n}
\end{pmatrix},\quad z\in \C\setminus (\realR+i\mu).
\end{equation}
\item[]{\bf Jump condition:}  On the line $\realR+i\mu$, the matrix function $\mathbf P_n^{(c)}(z)$ takes limiting values from above and below, and satisfies the jump condition
\begin{equation} 
\mathbf P_{n+}^{(c)}(z)=\mathbf P_{n-}^{(c)}(z)\begin{bmatrix}1 &  e^{-\frac{nT}{2}(z^2-2i\mu z)} \\ 0 & 1 \end{bmatrix}. 
\end{equation}
\end{itemize}
\end{rhp}
Proceeding with the steepest-descent analysis of this Riemann--Hilbert 
problem, we make the transformations
$${\bf P}^{(c)}_n\to {\bf T}^{(c)}_n \to {\bf S}^{(c)}_n$$
as
\eq
\mathbf T^{(c)}_n(z) := e^{-n\ell\sg_3/2}{\bf P}^{(c)}_n e^{-n(g(z) - \ell/2)\sg_3}
\endeq
and
\begin{equation}
\mathbf S^{(c)}_n(z) := \begin{cases} 
\mathbf T^{(c)}_n(z)\bbm 1 & 0 \\ -e^{-nG(z)} & 1 \ebm, & z\in\Omega_+, \vspace{.05in} \\
\mathbf T^{(c)}_n(z)\bbm 1 & 0 \\ e^{nG(z)} & 1 \ebm, & z\in\Omega_-,\\
\mathbf T^{(c)}_n(z), & \textrm{otherwise}, \end{cases}
\end{equation}
where the functions $g(z)$ and $G(z)$ are defined in 
\eqref{eq:def-g-function} and \eqref{eq:def-G}.
The function $\mathbf S^{(c)}_n(z)$ has exactly the same jumps as $\mathbf S_n(z)$ given in \eqref{st3a} except on the lines $\realR+i(\mu+\ep)$ and $\realR-i\delta$. On these lines there is no jump at all except on the interval $(a+i\ep, b+i\ep)$ (oriented right-to-left), where the jump is
\eq
\label{VXc}
 {\bf V}^{(\bf X^{(c)})}(z) = \begin{bmatrix} 1 & 0 \\ -e^{-nG(z)} & 1 \end{bmatrix}, \quad z\in (a+i\ep, b+i\ep).
\endeq
Since the jumps for $\mathbf S_n(z)$ on $\realR+i(\mu+\ep)$ and $\realR-i\delta$ are exponentially close (in $n$) to the identity matrix, as is the jump for $\mathbf S^{(c)}_n(z)$ on $(a+i\ep, b+i\ep)$, we use the same model solutions for both the continuous and discrete versions of the Riemann--Hilbert problem. That is, we let
\eq
 {\bf X}_n(z) := {\bf S}_n(z) {\bf M}^{(0)}(z)^{-1}, \qquad  {\bf X}^{(c)}_n(z) := {\bf S}^{(c)}_n(z) {\bf M}^{(0)}(z)^{-1},
\endeq
where ${\bf M}^{(0)}(z)$ is defined in \eqref{M0-def}. Then
\eq
 {\bf X}_n(z) = \mathbb{I} + \bigO(n^{-1}), \qquad  {\bf X}^{(c)}_n(z) = \mathbb{I} + \bigO(n^{-1}).
 \endeq
Now consider the ratio ${\bf Y}_n(z):=  {\bf X}_n(z)  {\bf X}^{(c)}_n(z)^{-1}$. It only has jumps on the lines $\realR+i(\mu+\ep)$ and $\realR-i\delta$. On these lines, its jumps are given by
\eq
{\bf Y}_{n+}(z) = {\bf Y}_{n-}(z){\bf V}^{(\bf Y)}(z),
\endeq
where
\eq
 {\bf V}^{(\bf Y)}(z) = \left\{
 \begin{aligned}
 & {\bf X}^{(c)}_n(z) {\bf V}^{(\bf S)}(z) {\bf X}^{(c)}_n(z)^{-1}, \quad z\in \{\realR-i\delta\} \cup \{\{\realR + i(\mu+\ep)\} \setminus (a+i\ep, b+i\ep)\}, \\
 &{\bf X}^{(c)}_{n-}(z) {\bf V}^{(\bf S)}(z) {\bf X}^{(c)}_{n+}(z)^{-1}, \quad z\in (a+i\ep, b+i\ep),
 \end{aligned}\right.
 \endeq
 and ${\bf V}^{(\bf S)}(z)$ is as defined in \eqref{st3a}.  Since ${\bf V}^{(\bf S)}(z)=  \mathbb{I} + \bigO(e^{-cn})$ on these contours for some $c>0$, and ${\bf X}^{(c)}_n(z)$ is bounded, we find that
 \eq
 {\bf V}^{(\bf Y)}(z) =
 \mathbb{I} + \bigO(e^{-cn}), \quad z\in \{\realR-i\delta\} \cup \{\{\realR + i(\mu+\ep)\} \setminus (a+i\ep, b+i\ep)\}.
 \endeq
 Similarly, on the line segment $(a+i\ep, b+i\ep)$ we have
  \eq
\begin{aligned}
 {\bf V}^{(\bf Y)}(z) &=
 {\bf X}^{(c)}_{n-}(z) {\bf X}^{(c)}_{n+}(z)^{-1}+ \bigO(e^{-cn}).
\end{aligned}
     \endeq
 To see that this is also close to the identity matrix, note that here the jump matrix for $ {\bf X}^{(c)}_{n}(z)$ is given by  
   \eq\label{VXc_est}
 {\bf X}^{(c)}_{n-}(z)^{-1} {\bf X}^{(c)}_{n+}(z) = {\bf V}^{(\bf X^{(c)})}(z) =  \mathbb{I} + \bigO(e^{-cn}),\quad z\in (a+i\ep, b+i\ep),
  \endeq
  where $ {\bf V}^{(\bf X^{(c)})}(z)$ is given in \eqref{VXc}.
   Again using that   ${\bf X}^{(c)}_{n\pm}(z)$ is bounded, we find that \eqref{VXc_est} implies
     \eq
 {\bf V}^{(\bf Y)}(z) = {\bf X}^{(c)}_{n-}(z) {\bf X}^{(c)}_{n+}(z)^{-1} =\mathbb{I} + \bigO(e^{-cn}),  \quad z\in (a+i\ep, b+i\ep).
  \endeq
 Since ${\bf Y}_n(z)$ has jumps that are exponentially close to the identity matrix as $n\to\infty$, the nonlinear steepest-descent small-norm theory 
for Riemann--Hilbert problems \cite{DeiftZ:1993} gives
 \eq
 {\bf Y}_n(z) =  \mathbb{I} + \bigO(e^{-cn}),
 \endeq
 or equivalently
  \eq\label{Xdiff}
  {\bf X}_n(z)  = {\bf X}^{(c)}_n(z)+ \bigO(e^{-cn}),
  \endeq
  where we again use that  ${\bf X}^{(c)}_n(z)$ is bounded.

 Now consider the coefficient $c_{n,n,n-1}^{(T, \mu, \tau)}$ used to compute the winding number probabilities. The formula for $c_{n,n,n-1}^{(T, \mu, \tau)}$ in terms of the RHP is given in \eqref{cnnnm1} as
$$c_{n,n,n-1}^{(T, \mu, \tau)} = -ng_1 + [{\bf M}_1]_{11} + [{\bf X}_1]_{11}.$$
Let $d_{n,n,n-1}^{(T, \mu)}$ be the analogous coefficient for the continuous 
orthogonal polynomials, i.e., 
 \eq
 H_{n,k}^{(T,\mu)}(z) = z^k + d_{n,n,n-1}^{(T, \mu)} z^{k-1} + \cdots.
 \endeq
 The steepest-descent analysis of the Riemann--Hilbert problem for the continuous orthogonal polynomials gives
 \eq
 d_{n,n,n-1}^{(T, \mu)} = -ng_1 + [{\bf M}_1]_{11} + [{\bf X}^{(c)}_1]_{11};
 \endeq
 thus we have that the difference $   c_{n,n,n-1}^{(T, \mu, \tau)} -   d_{n,n,n-1}^{(T, \mu)}$ is
 \eq
    c_{n,n,n-1}^{(T, \mu, \tau)}-    d_{n,n,n-1}^{(T, \mu)} = [{\bf X}_1]_{11}- [{\bf X}^{(c)}_1]_{11} = \bigO(e^{-cn}),
    \endeq
 using \eqref{Xdiff}. Since $ H_{n,k}^{(T,\mu)}(z)$ is simply a rescaled and shifted Hermite polynomial, its coefficients are known exactly. In particular,
  \eq
 d_{n,n,n-1}^{(T, \mu)} = -in\mu,
 \endeq
 so we have 
 \eq
     c_{n,n,n-1}^{(T, \mu, \tau)} = -in\mu+\bigO(e^{-cn})
     \endeq
     for some $c>0$. It follows that the errors in 
\eqref{cnnnm1}--\eqref{Hn-over-Hn} and in the result of 
Lemma \ref{lemma-subcrit-winding-algebraic-error} can be improved to $\bigO(e^{-cn})$.
\end{proof}

\subsection{Orthogonal polynomial asymptotics in the subcritical regime}
\label{subsec-op-subcrit}

We now prove our results on orthogonal polynomials in the subcritical regime.

\begin{proof}[Proof of Theorem \ref{thm:polynomials_asymptotics_sub} (Orthogonal polynomial asymptotics in the subcritical regime)]
Taking into account that ${\bf D}_+^u(z)$ and ${\bf D}_-^u(z)$ are 
upper-triangular, \eqref{IP3}, \eqref{R-def}, \eqref{T-def}, \eqref{st2}, and 
\eqref{X-subcrit} give 
\eq
\label{p-ito-XM0}
p_{n,n}^{(T,\mu,\tau)}(z) = [{\bf P}_n(z)]_{11} = [{\bf X}_n(z){\bf M}^{(0)}(z)]_{11}e^{ng(z)}
\endeq
for $z\in\mathbb{C}\backslash(\Omega_+\cup\Omega_-)$.  Then \eqref{M0-def} and 
\eqref{M0-out} show 
\eq
\label{M011}
[{\bf M}^{(0)}(z)]_{11} = \frac{\gamma(z)+\gamma(z)^{-1}}{2}, \quad z\in\mathbb{C}\backslash(\mathbb{D}_a\cup\mathbb{D}_b).
\endeq
Along with \eqref{tt19}, this proves \eqref{sub-crit_asymptotics_outer} and part 
(a).

Now consider $z$ on the band and bounded away from the endpoints $a$ and $b$.  
The calculation leading to \eqref{p-ito-XM0} must be modified to account for the 
change of variables inside the lenses, so 
\eq
p_{n,n}^{(T,\mu,\tau)}(z) = [{\bf X}_n(z){\bf M}_-^{(0)}(z)]_{11}e^{ng_-(z)} - [{\bf X}_n(z){\bf M}_-^{(0)}(z)]_{12}e^{ng_+(z)}.
\endeq
Using \eqref{tt19} gives 
\eq
\label{p-ito-M11-M12-sub}
p_{n,n}^{(T,\mu,\tau)}(z) = \left([{\bf M}_-^{(0)}(z)]_{11}e^{ng_-(z)} - [{\bf M}_-^{(0)}(z)]_{12}e^{ng_+(z)}\right)(1+\mathcal{O}(n^{-1})).
\endeq
Then \eqref{M0-def} and \eqref{outer-band-jump} show
\eq
\label{M012-to-M011}
[{\bf M}_-^{(0)}(z)]_{12} = [{\bf M}_{0-}^\text{(out)}(z)]_{12} = -[{\bf M}_{0+}^\text{(out)}(z)]_{11}.
\endeq
Together, \eqref{p-ito-M11-M12-sub}, \eqref{M012-to-M011}, and \eqref{M011} prove 
\eqref{sub-crit_asymptotics_band} and part (b).
\end{proof}

\begin{proof}[Proof of Theorem \ref{thm-normalizations-subcrit} (Subcritical normalizing constants and recurrence coefficients)]
To find the formulas for $h_{n,n}^{(T,\mu,\tau)}$ and 
$\left(h_{n,n-1}^{(T,\mu,\tau)}\right)^{-1}$ we start by combining \eqref{IP4} 
and \eqref{P-at-infinity}:
\eq
h_{n,n}^{(T,\mu,\tau)} = -2\pi i\left(\left[{\bf M}_1\right]_{12} + \left[{\bf X}_{n,1}\right]_{12} \right)e^{n\ell}, \quad \left(h_{n,n-1}^{(T,\mu,\tau)}\right)^{-1} = \frac{-1}{2\pi i}\left(\left[{\bf M}_1\right]_{21} + \left[{\bf X}_{n,1}\right]_{21} \right)e^{-n\ell}.
\endeq
For $|z|$ sufficiently large, ${\bf M}^{(0)}(z)={\bf M}_0^\text{(out)}(z)$ by 
\eqref{M0-def}.  Thus, by expanding the off-diagonal entries of \eqref{M0-out}, 
\eq
\label{M112-M121}
[{\bf M}_1]_{12} = \frac{i}{\sqrt{T}}, \quad [{\bf M}_1]_{21} = -\frac{i}{\sqrt{T}}.
\endeq
Along with \eqref{tt19}, \eqref{eq:LWg-definition}, and \eqref{eq:def-g-function}, the previous two equations establish \eqref{h-subcrit}.  
Then the equation for $\left(\gamma_{n,n}^{(T,\mu,\tau)}\right)^2$ follows 
immediately from \eqref{h-subcrit} and \eqref{eq:defn_of_gamma_nk}.  To analyze 
$\be^{(T,\mu,\tau)}_{n, n-1}$, we start with \eqref{IP5}.  We previously 
computed 
\eq
\label{P111-P221}
[{\bf P}_{n,1}]_{11} = -in\mu+\mathcal{O}(n^{-1}), \quad [{\bf P}_{n,1}]_{21} = \frac{e^{-n\ell}}{2\pi\sqrt{T}}(1+\mathcal{O}(n^{-1}))
\endeq
(see \eqref{IP5a}, \eqref{cnnnm1}, \eqref{IP4}, and \eqref{h-subcrit}).  To 
obtain $[{\bf P}_{n,2}]_{21}$ we continue the expansion \eqref{P-at-infinity} to 
$\mathcal{O}(z^{-2})$ and use \eqref{tt19} to find
\eq
[{\bf P}_{n,2}]_{21} = -\frac{e^{-n\ell}}{2\pi i}\left([{\bf M}_2]_{21} - ng_1[{\bf M}_1]_{21} + \mathcal{O}(n^{-1})\right)
\endeq
(here ${\bf M}_2$ is the coefficient of $z^{-2}$ in the expansion of 
${\bf M}_0^\text{(out)}(z)$ as $z\to\infty$).  We recall $g_1=i\mu$ from 
\eqref{g1} and $[{\bf M}_1]_{21}=-i/\sqrt{T}$ from \eqref{M112-M121}, and expand 
\eqref{M0-out} as $z\to\infty$ to determine
\eq
[{\bf M}_2]_{21} = \frac{\mu}{\sqrt{T}}.
\endeq  
Together, these facts give
\eq
\label{P221}
[{\bf P}_{n,2}]_{21} = \frac{e^{-n\ell}}{2\pi i}\left(n\mu-\mu+\mathcal{O}(n^{-1})\right).
\endeq
Then \eqref{IP5}, \eqref{P111-P221}, and \eqref{P221} give 
$\beta_{n,n-1}^{(T,\mu,\tau)}=i\mu+\mathcal{O}(n^{-1})$, thereby 
completing the proof of the theorem.
\end{proof}

\section{Analysis in the Hermite regime}
\label{sec-hermite-analysis}

For fixed $T\in(0,\pi^2)$ and fixed $\mu\in[0,\mu_c(T))$, the jump matrices 
${\bf V}^{({\bf X})}(z)$ for ${\bf X}_n(z)$ are uniformly within 
$\mathcal{O}(n^{-1})$ of the identity.  
However, for $\mu=\mu_c(T)$ or $\mu$ approaching $\mu_c(T)$ as $n\to\infty$, 
this uniform decay breaks down on the contour $\mathbb{R}-i\delta$ close to 
the imaginary axis.  On $\mathbb{R}-i\delta$ we have the requirement 
\eq
\Re (\phi(z)) <0, \quad z\in\mathbb{R}-i\delta,
\endeq
where $\phi(z)$ is defined in \eqref{eq:def-phi}.  The smallest postive value 
of $\mu$ for which this condition fails, namely $\mu_c(T)$ defined in 
\eqref{mu-crit}, is specified by the conditions 
  \begin{equation}
  \frac{d\phi}{d z}(z) = 0, \quad \Re \phi(z) = 0,
\end{equation}
for $z$ in the lower half plane.  The $z$-value at which the transition 
occurs is 
\eq
\label{z-crit-fixed-mu}
z_c(T):=i\mu_c(T) - \frac{2i}{T}\sqrt{\pi^2-T}
\endeq
(note that $z_c(T) = -i\delta(\mu_c(T),T)$).
The signature charts for $\Re\phi(z)$ are shown in Figure \ref{sig-chart} in 
subcritical and critical situations.  
Also see Figure \ref{winding-plots}, where the transition from zero winding to 
positive winding is evident in plots of the exact winding number probabilities.
\begin{figure}[h]
\begin{center}
\includegraphics[height=2.1in]{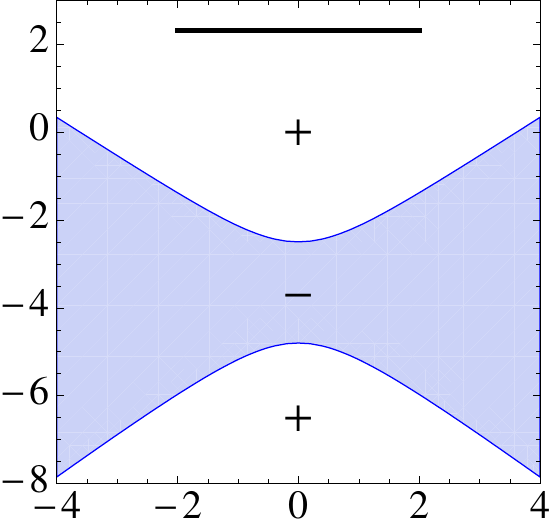}
\includegraphics[height=2.1in]{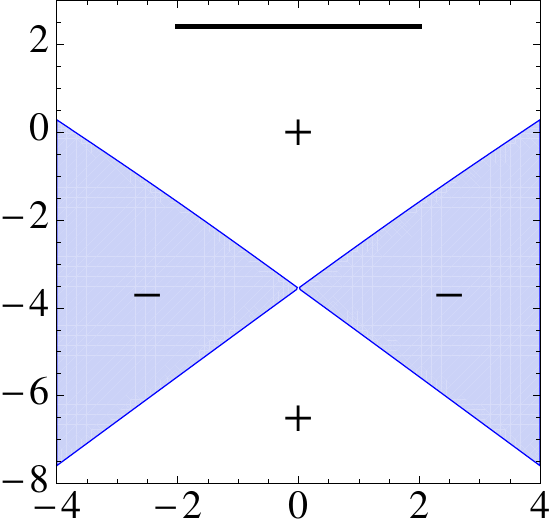}
\caption{Signature charts for $\Re\phi(z)$ with $T=1$ in the complex $z$-plane.  Left: $\mu=\mu_c(1)-0.1$ (subcritical).  Right: $\mu=\mu_c(1)\approx 2.402$ (critical).  The band $[a,b]$ is shown by a thick black line.}
\label{sig-chart}
\end{center}
\end{figure}
Now for $\mu$ close to $\mu_c(T)$ it is necessary to insert a new parametrix 
near $z_c(T)$ in terms of continuous Hermite polynomials.  We will find that it 
is not possible to match the outer 
parametrix ${\bf M}_0^{(\text{out})}(z)$ to this local parametrix.  The 
remedy is to introduce a pole into the outer model problem, which requires the 
construction of a new outer model solution.

\subsection{The outer model problem for the Hermite regime}
We begin by modifying the outer model problem by inserting a pole near 
$z_c(T)$.  The order $k$ of this pole, which depends on $\mu$, is given in 
\eqref{k-condition}.  Instead of placing the pole exactly at $z_c(T)$, it will be 
convenient to use the point $\widetilde{z}_c(\mu,T)$, as defined in \eqref{zc-def}. 
Note that $\widetilde{z}_c\to z_c$ as $\mu\to\mu_c$.  The new outer model 
problem is the following.
\begin{rhp}[The Hermite regime outer model problem]
\label{rhp-outer-hermite}
Fix $k\in\mathbb{N}$ and determine a $2\times 2$ 
matrix-valued function ${\bf M}_k^{(\rm{out})}(z)$ satisfying:
\begin{itemize}
\item[]{\bf Analyticity:} ${\bf M}_k^{(\rm{out})}(z)$ is meromorphic in $z$ 
off $[a,b]$ and is H\"older continuous up to $(a,b)$ with at worst quarter-root 
singularities at $a$ and $b$.  The only pole of ${\bf M}_k^{(\rm{out})}(z)$ is 
at $\widetilde{z}_c$, and 
${\bf M}_k^{(\rm{out})}(z)(z-\widetilde{z}_c)^{-k\sigma_3}$ is analytic at 
$z=\widetilde{z}_c$.  
\item[]{\bf Normalization:}  
\eq
\lim_{z\to\infty}{\bf M}_k^{(\rm{out})}(z) = \mathbb{I}.
\endeq
\item[]{\bf Jump condition:}  Orienting $[a,b]$ left-to-right, the solution 
satisfies \eqref{outer-band-jump}.
\end{itemize}
\end{rhp}
To solve this problem we follow \cite{BertolaL:2009,BuckinghamM:2015} and use the functions $D(z;\mu,T)$, $R(z;\mu,T)$, and $\alpha(z;\mu,T)$ defined in \eqref{def:D}, \eqref{def:R}, and \eqref{def:alpha}, respectively, as well as the constant $ \alpha_\infty$ defined in \eqref{def:alpha_infty}.
Note that $D_+(z)D_-(z)=-1$ for $z\in[a,b]$.  %Now define 
%\eq
%\alpha(z;\mu,T) := i\frac{D(\widetilde{z}_c;\mu,T) - D(z;\mu,T)}{1+D(\widetilde{z}_c;\mu,T)D(z;\mu,T)}.
%\endeq
We will use the following properties of $\alpha(z)$, which are shown by direct 
calculation.
\begin{lem}
\label{tau-properties-lem}
$\alpha(z)$ satisfies the following.
\begin{itemize}
\item[(a)] $\alpha(z)$ is analytic for $z\in\mathbb{C}\backslash[a,b]$.
\item[(b)] $\alpha_+(z)\alpha_-(z)=1$ for $z\in(a,b)$ oriented left-to-right.
\item[(c)] $\displaystyle \alpha(z) = -\frac{T^{3/2}}{4\pi^2}(z-\widetilde{z}_c) + \mathcal{O}((z-\widetilde{z}_c)^2)$ for $z\in\mathbb{D}_{\widetilde{z}_c}$.
\item[(d)] $\alpha(z)$ is bounded in a full neighborhood of $[a,b]$.
\item[(e)] $\displaystyle \alpha(z)= \alpha_\infty\left(1 + \frac{2\pi i}{Tz} + \mathcal{O}\left(\frac{1}{z^2}\right) \right)$ as $z\to\infty$.%, where $\displaystyle\alpha_\infty:=\frac{-i}{D(\widetilde{z}_c)} = \frac{\sqrt{\pi^2-T}+\pi}{\sqrt{T}}i$.
\end{itemize}
\end{lem}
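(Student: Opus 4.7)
The plan is to treat all five parts as short, direct calculations built on two structural properties of $D(z)$: first, the algebraic relation identifying $D(z)$ and $-1/D(z)$ as the two branches of the same double cover (a consequence of $(z-i\mu)^2 - R(z)^2 = 4/T$); second, the pointwise fact $|D_\pm(z)| = 1$ on $(a,b)$, from which a maximum-modulus argument forces $|D(z)| > 1$ throughout $\C\setminus[a,b]$.

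For part (b), on $(a,b)$ we have $R_+(z) = -R_-(z)$, so $D_+ + D_- = \sqrt{T}(z-i\mu)/i$ and $D_+ D_- = -\tfrac{T}{4}\bigl((z-i\mu)^2 - R_+(z)^2\bigr) = -1$. Substituting these into
\[
\alpha_+(z)\alpha_-(z) = -\frac{(D(\widetilde{z}_c) - D_+)(D(\widetilde{z}_c) - D_-)}{(1 + D(\widetilde{z}_c) D_+)(1 + D(\widetilde{z}_c) D_-)}
\]
(after expanding in elementary symmetric functions of $D_\pm$) collapses numerator and denominator to the same expression up to sign, yielding $\alpha_+\alpha_- = 1$. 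For part (a), $\alpha$ is manifestly analytic off $[a,b]$ except where $1 + D(\widetilde{z}_c)D(z) = 0$, which would require $D(z) = -1/D(\widetilde{z}_c)$. But $|D(z)| > 1$ on $\C\setminus[a,b]$ (by max-modulus applied to $1/D$, which is analytic, nonvanishing, equal to $0$ at $\infty$, and of modulus $1$ on the boundary), while $|1/D(\widetilde{z}_c)| < 1$, so no such $z$ exists.

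For part (d), parametrizing $z-i\mu = x \in (-2/\sqrt{T},2/\sqrt{T})$ and $R_+(z) = i\sqrt{4/T - x^2}$ gives $D_\pm(z) = -\tfrac{i\sqrt{T}x}{2} \pm \tfrac{\sqrt{T}}{2}\sqrt{4/T - x^2}$, so $|D_\pm(z)|^2 = \tfrac{Tx^2}{4} + \tfrac{T}{4}(4/T - x^2) = 1$. Since $|D(\widetilde{z}_c)| > 1$ by part (a), the denominator is bounded away from zero along $(a,b)$; the values $D(a) = i$, $D(b) = -i$ give the same conclusion at the endpoints. Continuity then extends nonvanishing to a full $\C$-neighborhood of $[a,b]$, which together with the mild branch-point behavior of $D$ at $a,b$ gives the stated boundedness.

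For part (c), $\alpha(\widetilde{z}_c) = 0$ is immediate, and the quotient rule gives $\alpha'(\widetilde{z}_c) = -iD'(\widetilde{z}_c)/(1 + D(\widetilde{z}_c)^2)$. Using $D'(z) = \tfrac{\sqrt{T}}{2i}\bigl(1 + (z-i\mu)/R(z)\bigr)$ together with $(\widetilde{z}_c - i\mu)^2 = -4(\pi^2-T)/T^2$ and $R(\widetilde{z}_c)^2 = -4\pi^2/T^2$, elementary simplification reduces the expression to $-T^{3/2}/(4\pi^2)$, with the $\sqrt{\pi^2-T}$ factors cancelling cleanly. For part (e), expand $R(z) = (z-i\mu) - \tfrac{2}{T(z-i\mu)} + \bigO(z^{-3})$ at infinity, substitute into $D(z)$ to obtain $D(z) = -i\sqrt{T}\,z - \sqrt{T}\mu + i/(\sqrt{T}\,z) + \bigO(z^{-2})$, and insert into the Möbius expression for $\alpha(z)$ to read off $\alpha_\infty$ and the coefficient of $1/z$. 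All steps are routine; the only real point requiring care is consistent branch tracking of $R$ at $\widetilde{z}_c$, which is bookkeeping rather than a genuine obstacle.
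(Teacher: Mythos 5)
Your proposal is correct and takes the same route as the paper, which offers no details beyond asserting that these properties are ``shown by direct calculation'': the symmetric-function collapse giving $\alpha_+\alpha_-=1$ on $(a,b)$, the maximum-modulus bound $|D(z)|>1$ on $\C\setminus[a,b]$ (with $|D_\pm|\equiv 1$ on the cut) that handles the non-vanishing of $1+D(\widetilde{z}_c)D(z)$ in (a) and (d), and the expansions at $\widetilde{z}_c$ and at infinity for (c) and (e) are exactly the computations being invoked. The only point worth making explicit is the branch of $R$ at $\widetilde{z}_c$: the normalization $R(z)=z+\mathcal{O}(1)$ (equivalently $|D(\widetilde{z}_c)|>1$, which your part (a) already supplies) gives $D(\widetilde{z}_c)=-(\pi+\sqrt{\pi^2-T})/\sqrt{T}$, hence $D(\widetilde{z}_c)+D(\widetilde{z}_c)^{-1}=-2\pi/\sqrt{T}$ and the coefficient $+\frac{2\pi i}{Tz}$ in (e), whereas the constant in (c) is insensitive to this choice, as your cancellation shows.
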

Now the solution to the outer model problem is 
\eq
\label{Mk-out}
{\bf M}_k^{(\rm{out})}(z):=\alpha_\infty^{-k\sigma_3}{\bf M}_0^{(\rm{out})}(z)\alpha(z)^{k\sigma_3}.
\endeq

\subsection{The local Hermite parametrix}
We start by expanding $\phi(z)$ about $\widetilde{z}_c$:
\eq
\label{phi-expansion}
\phi(z) = -i\pi + 2\pi(\mu-\mu_c) - \frac{T}{2\pi}\sqrt{\pi^2-T}(z-\widetilde{z}_c)^2 + \mathcal{O}\left((z-\widetilde{z}_c)^3\right).
\endeq
There are a couple of salient features to observe about this expansion.  First, 
the term $\mu-\mu_c$ in the order-zero coefficient is zero at criticality and 
grows as $\mu$ increases.  This term measures the distance into the transition 
region, and its magnitude will determine the power of the pole needed in the 
outer model problem.  For convenience we write 
\eq
\label{c-def}
\mathfrak{c}(\mu,T):=2\pi(\mu-\mu_c(T)).
\endeq
Second, the order-one term is identically zero, while 
the order-two term is nonzero (as long as $T<\pi^2$, as assumed).  Together, 
these facts dictate that the local parametrix is built out of Hermite 
polynomials.  In a neighborhood of $\widetilde{z}_c$, we define the conformal 
map $W(z;\mu,T)$ by the identity
\eq
\label{phi-to-W}
\phi(z) = -i\pi + \mathfrak{c} - W(z)^2
\endeq
and the condition that $W$ is real and increasing along $\mathbb{R}-i\delta$, with $\phi$ as defined in \eqref{eq:def-phi}.  
The function $W$ is analytic and one-to-one for $z$ sufficiently close to 
$\widetilde{z}_c$ since $\phi(z)+i\pi-\mathfrak{c}$ vanishes to second order at 
$\widetilde{z}_c$.  Define $\mathbb{D}_{\widetilde{z}_c}$ to be an 
$n$-independent disk centered at $\widetilde{z}_c$.  The radius of the disk 
is chosen to be less than $\delta$ (so the disk is entirely in the lower 
half-plane) and is sufficiently small so that $W(z)$ is analytic and one-to-one 
in the disk.  From \eqref{phi-expansion} and \eqref{phi-to-W} we see
\eq
\label{W-near-ztildec}
W(z) = \sqrt{\frac{T}{2\pi}}\sqrt[4]{\pi^2-T}(z-\widetilde{z}_c) + \mathcal{O}\left((z-\widetilde{z}_c)^2\right)
\endeq
in its domain of definition.  We seek a parametrix 
${\bf M}_k^{(\widetilde{z}_c)}(z)$ satisfying the following local 
Riemann--Hilbert problem.
\begin{rhp}[The Hermite regime inner model problem in $\mathbb{D}_{\widetilde{z}_c}$]
\label{rhp-inner-hermite}
Fix $k\in\mathbb{N}$.  Determine a $2\times 2$ matrix-valued function 
${\bf M}_k^{(\widetilde{z}_c)}(z)$ defined for 
$z\in\mathbb{D}_{\widetilde{z}_c}$ satisfying:
\begin{itemize}
\item[]{\bf Analyticity:} ${\bf M}_k^{(\widetilde{z}_c)}(z)$ is analytic for 
$z\in\mathbb{D}_{\widetilde{z}_c}$ off the contour $\mathbb{R}-i\delta$ with 
H\"older-continuous boundary values on the contour.  
\item[]{\bf Normalization:}  
\eq
{\bf M}_k^{(\widetilde{z}_c)}(z) = (\mathbb{I}+\mathcal{O}(n^{-1})){\bf M}_k^{(\rm{out})}(z) \text{ as }n\to\infty\text{ for }z\in\partial\mathbb{D}_{\widetilde{z}_c}.
\endeq
\item[]{\bf Jump condition:}  Orienting the jump contour left-to-right, the 
solution satisfies 
\eq
{\bf M}_{k+}^{(\widetilde{z}_c)}(z)={\bf M}_{k-}^{(\widetilde{z}_c)}(z) \bbm 1 & e^{2\pi i\tau-i\pi n + n\mathfrak{c} - nW(z)^2} \\ 0 & 1 \ebm, \quad z\in(\mathbb{R}-i\delta)\cap\mathbb{D}_{\widetilde{z}_c}.
\endeq
\end{itemize}
\end{rhp}
This problem is similar to the well-known Fokas--Its--Kitaev Riemann--Hilbert 
problem for (continuous) Hermite orthogonal polynomials \cite{FokasIK:1991}.  
Recall the Hermite polynomials $\mathfrak{h}_n(\zeta)$ and their leading 
coefficients $\kappa_k$ defined in \eqref{cont-hermite-def} and \eqref{eq:hk}, 
respectively.  
Then define the matrix-valued function 
\eq
\label{Hk-def}
{\bf H}_k(\zeta):=\begin{cases} \bbm 1 & \mathcal{C}(e^{-\zeta^2}) \\ 0 & 1 \ebm, \quad k=0, \vspace{.05in} \\ \bbm \frac{1}{\kappa_k}\mathfrak{h}_k(\zeta) & \frac{1}{\kappa_k}\mathcal{C}(\mathfrak{h}_k(\zeta)e^{-\zeta^2}) \\ -2\pi i \kappa_{k-1}\mathfrak{h}_{k-1}(\zeta) & -2\pi i \kappa_{k-1}\mathcal{C}(\mathfrak{h}_{k-1}(\zeta)e^{-\zeta^2}) \ebm, & k\geq 1, \end{cases}
\endeq
where $\mathcal{C}$ is the continuous Cauchy transform 
\eq
\mathcal{C}(f(\zeta)):=\frac{1}{2\pi i}\int_\mathbb{R}\frac{f(s)ds}{s-\zeta}.
\endeq
It is a now-classical fact that ${\bf H}_k(\zeta)$ is the unique solution to the 
following Riemann--Hilbert problem.
\begin{rhp}[The Fokas--Its--Kitaev problem for Hermite polynomials]
\label{rhp-fokas-its-kitaev}
Fix $k\in\mathbb{N}$.  Determine a $2\times 2$ matrix-valued function 
${\bf H}_k(\zeta)$ such that:
\begin{itemize}
\item[]{\bf Analyticity:} ${\bf H}_k(\zeta)$ is analytic for 
$\mathbb{C}\backslash\mathbb{R}$, with H\"older-continuous boundary values 
from the upper and lower half-planes.
\item[]{\bf Normalization:}  
\eq
\label{Hk-expansion}
{\bf H}_k(\zeta)\zeta^{-k\sigma_3} = \mathbb{I} + \mathcal{O}\left(\frac{1}{\zeta}\right)\text{ as }\zeta\to\infty.
\endeq
\item[]{\bf Jump condition:}  Orienting $\mathbb{R}$ left-to-right, the 
solution satisfies 
\eq
{\bf H}_{k+}(\zeta)={\bf H}_{k-}(\zeta) \bbm 1 & e^{-\zeta^2} \\ 0 & 1 \ebm, \quad \zeta\in\mathbb{R}.
\endeq
\end{itemize}
\end{rhp}
From the large-$\zeta$ expansion of \eqref{Hk-def}, we can improve the 
normalization condition \eqref{Hk-expansion} to 
\eq
\label{Hk-full-expansion}
{\bf H}_k(\zeta)\zeta^{-k\sigma_3} = \mathbb{I} + \bbm 0 & \frac{-1}{2\pi i\kappa_k^2} \\ -2\pi i\kappa_{k-1}^2 & 0 \ebm \frac{1}{\zeta} + \bbm \frac{k^2-k}{4} & 0 \\ 0 & -\frac{k^2+k}{4} \ebm \frac{1}{\zeta^2} + \mathcal{O}\left(\frac{1}{\zeta^3}\right)\text{ as }\zeta\to\infty
\endeq
(here $\kappa_{-1}:=0$).  
Before writing down the solution to the inner model problem we need to pull out 
a holomorphic prefactor from ${\bf M}_k^{(\text{out})}(z)$:
\eq
\label{Fk-def}
{\bf F}_k^{(\text{out})}(z):={\bf M}_k^{(\text{out})}(z)W(z)^{-k\sigma_3}, \quad z\in\mathbb{D}_{\widetilde{z}_c}.
\endeq
This function is analytic in its domain of definition from 
\eqref{W-near-ztildec} and the analyticity condition in Riemann--Hilbert Problem 
\ref{rhp-outer-hermite}.  We can now solve Riemann--Hilbert Problem 
\ref{rhp-inner-hermite}:
\eq
\label{Mk-zc}
{\bf M}_k^{(\widetilde{z}_c)}(z) = {\bf F}_k^{(\text{out})}(z)n^{-k\sigma_3/2}e^{i\pi\tau\sigma_3}e^{n(\mathfrak{c}-i\pi)\sigma_3/2}{\bf H}_k(n^{1/2}W(z))e^{n(i\pi-\mathfrak{c})\sigma_3/2}e^{-i\pi\tau\sigma_3}.
\endeq

\subsection{The global model and error problems in the Hermite regime}

We also note that there is an Airy parametrix ${\bf M}_k^{(a)}(z)$ 
defined for $z\in\mathbb{D}_a$ satisfying Riemann--Hilbert Problem \ref{rhp-airy} 
with the subscript 0 replaced with the subscript $k$ (and an analogous 
parametrix ${\bf M}_k^{(b)}(z)$ defined for $z\in\mathbb{D}_b$).  In 
particular, the parametrices satisfy 
\eq
\label{airy-matching}
{\bf M}_k^{(\#)}(z) = {\bf M}_k^{(\rm{out})}(z)(\mathbb{I}+\mathcal{O}(n^{-1})), \quad z\in\partial\mathbb{D}_{\#} \quad (\#\in\{a,b\}).
\endeq
We refer to 
\cite{BuckinghamM:2015} for discussion of a similar construction, as well as 
\cite{DeiftKMVZ:1999,BleherL:2011,BuckinghamM:2014} for more details.
We now set the global model solution to be 
\eq
\label{Mk-def}
{\bf M}^{(k)}(z):=\begin{cases} {\bf M}_k^\text{(out)}(z), & z\in\mathbb{C}\backslash(\mathbb{D}_a\cup\mathbb{D}_b\cup\mathbb{D}_{\widetilde{z}_c}), \\ {\bf M}_k^{(a)}(z), & z\in \mathbb{D}_a, \\ {\bf M}_k^{(b)}(z), & z\in \mathbb{D}_b, \\ {\bf M}_k^{(\widetilde{z}_c)}(z), & z\in \mathbb{D}_{\widetilde{z}_c}.  \end{cases}
\endeq
The error function is 
\eq
\label{X-hermite-def}
{\bf X}_n^{(k)}(z) := {\bf S}_n(z){\bf M}^{(k)}(z)^{-1}.
\endeq
This function has jumps on the contours $\Sigma_\text{Herm}^{({\bf X})}$ as 
shown in Figure \ref{fig-hermite-error-jumps}.  
\begin{figure}[h]
\setlength{\unitlength}{1.5pt}
\begin{center}
\begin{picture}(100,80)(-50,-20)
\thicklines
\put(-80,65){\line(1,0){160}}
\put(-80,45){\line(1,0){22}}
\put(58,45){\line(1,0){22}}
\put(-50,25){\line(1,0){100}}
\multiput(-80,0)(4,0){40}{\line(1,0){2}}
\put(-80,-15){\line(1,0){160}}
%\put(6.5,-15){\line(1,0){73}}
\put(-50,25){\line(0,1){12}}
\put(-50,53){\line(0,1){12}}
\put(50,25){\line(0,1){12}}
\put(50,53){\line(0,1){12}}
\put(82,-2){$\mathbb{R}$}
\put(82,43){$\mathbb{R}+i\mu$}
\put(82,63){$\mathbb{R}+i(\mu+\epsilon)$}
\put(82,-17){$\mathbb{R}-i\delta$}
\put(-50,45){\circle{15}}
\put(50,45){\circle{15}}
\put(-1.5,-15){\circle{15}}
\put(-70,52){$\partial\mathbb{D}_a$}
\put(56,52){$\partial\mathbb{D}_b$}
\put(6,-9){$\partial\mathbb{D}_{\widetilde{z}_c}$}
\put(-25,17){$\sigma_-\cap(\mathbb{D}_a\cup\mathbb{D}_b)^c$}
\put(1,65){\vector(-1,0){5}}
\put(66,45){\vector(1,0){5}}
\put(-70,45){\vector(1,0){5}}
\put(70,65){\vector(-1,0){5}}
\put(-65,65){\vector(-1,0){5}}
\put(-4,25){\vector(1,0){5}}
\put(-3,-7.5){\vector(1,0){4}}
\put(-3,-15){\vector(1,0){4}}
\put(-43,-15){\vector(1,0){4}}
\put(43,-15){\vector(1,0){4}}
\put(50,56){\vector(0,1){5}}
\put(-50,61){\vector(0,-1){5}}
\put(-44,50){\vector(1,-1){1}}
\put(45.5,51.5){\vector(1,1){1}}
\end{picture}
\end{center}
\caption{\label{fig-hermite-error-jumps} The jump 
contours $\Sigma_\text{Herm}^{({\bf X})}$ along with their orientations.}
\end{figure}
We write the jump conditions as 
\eq
\label{X-hermite-jump}
{\bf X}_{n+}^{(k)}(z) = {\bf X}_{n-}^{(k)}(z) {\bf V}_\text{Herm}^{({\bf X})}(z), \quad z\in\Sigma_\text{Herm}^{({\bf X})}.
\endeq
The jumps are exponentially small (as $n\to\infty$) except on the three disk 
boundaries.  Special attention should be paid to the jump on 
$\mathbb{R}-i\delta$ inside $\mathbb{D}_{\widetilde{z}_c}$, on which 
\eq
\label{VHerm-on-R-near-zc}
\begin{split}
{\bf V}_\text{Herm}^{({\bf X})}(z) & = {\bf X}_{n-}^{(k)}(z)^{-1}{\bf X}_{n+}^{(k)}(z) \\ 
  & = {\bf M}_-^{(k)}(z){\bf S}_{n-}(z)^{-1}{\bf S}_{n+}(z){\bf M}_+^{(k)}(z)^{-1} \\ 
  & = {\bf M}_-^{(k)}(z){\bf V}^{({\bf S})}(z){\bf V}^{({\bf M})}(z)^{-1}{\bf M}_-^{(k)}(z)^{-1} \\
  & = {\bf M}_-^{(k)}(z) \bbm 1 & \left(\frac{1}{1-e^{-2\pi i(nz-\tau)}} - 1\right)e^{n\phi+2\pi i\tau} \\ 0 & 1 \ebm {\bf M}_-^{(k)}(z)^{-1}, \quad z\in(\mathbb{R}-i\delta)\cap\mathbb{D}_{\widetilde{z}_c}.
\end{split}
\endeq
The function ${\bf M}^{(k)}(z)={\bf M}_k^{(\widetilde{z}_c)}(z)$ is bounded, and 
the term in parentheses in the (12)-entry of the middle matrix in the last line 
of \eqref{VHerm-on-R-near-zc} is exponentially small.  Thus the jump on this 
segment is exponentially close to the identity as long as $e^{n\phi}$ is 
not exponentially growing (or growing at a slower exponential rate than the 
term in parentheses).  
The jumps on $\partial\mathbb{D}_a$ and $\partial\mathbb{D}_b$ are 
$\mathcal{O}(n^{-1})$ by \eqref{airy-matching}.  Thus
\eq
\label{V-herm-small-off-Dzc}
{\bf V}_\text{Herm}^{({\bf X})}(z) = \mathcal{O}\left(\frac{1}{n}\right), \quad z\in\Sigma_\text{Herm}^{({\bf X})}\backslash\mathbb{D}_{i\mu}.
\endeq
However, the jump on $\partial\mathbb{D}_{\widetilde{z}_c}$ is actually not 
small for certain values of $\mu$.  We will address this problem in the next 
section by introducing a \emph{parametrix for the error}.  See 
\cite{BuckinghamM:2012,BuckinghamM:2015,ClaeysG:2010} for similar 
constructions.

\subsection{Parametrix for the error}
We begin by gauging the size of the error jump on 
$\partial\mathbb{D}_{\widetilde{z}_c}$.  
Apply \eqref{X-hermite-jump}, \eqref{X-hermite-def}, and the fact that 
${\bf S}_n$ is continuous across $\partial\mathbb{D}_{\widetilde{z}_c}$ to see 
\eq
\left.{\bf V}_\text{Herm}^{({\bf X})}(z)\right\vert_{\partial\mathbb{D}_{\widetilde{z}_c}} = {\bf M}_k^{(\widetilde{z}_c)}(z){\bf M}_k^\text{(out)}(z)^{-1}.
\endeq
This is calculated using \eqref{Fk-def}, \eqref{Mk-zc}, and 
\eqref{Hk-full-expansion}:
\eq
\label{VHerm-near-zc-1}
\begin{split}
& \left.{\bf V}_\text{Herm}^{({\bf X})}(z)\right\vert_{\partial\mathbb{D}_{\widetilde{z}_c}} \\
& = {\bf F}_k^{(\text{out})}(z)n^{-k\sigma_3/2}e^{i\pi\tau\sigma_3}e^{n(\mathfrak{c}-i\pi)\sigma_3/2}{\bf H}_k(n^{1/2}W(z))W(z)^{-k\sigma_3}e^{n(i\pi-\mathfrak{c})\sigma_3/2}e^{-i\pi\tau\sigma_3}{\bf F}_k^{(\text{out})}(z)^{-1}  \\
& = {\bf F}_k^{(\text{out})}(z)n^{-k\sigma_3/2}e^{i\pi\tau\sigma_3}e^{n(\mathfrak{c}-i\pi)\sigma_3/2} \\
& \hspace{.2in} \times \left(\mathbb{I} + \frac{1}{n^{1/2}W(z)}\bbm 0 & \frac{-1}{2\pi i\kappa_k^2} \\ -2\pi i\kappa_{k-1}^2 & 0 \ebm + \frac{1}{nW(z)^2}\bbm \frac{k^2-k}{4} & 0 \\ 0 & -\frac{k^2+k}{4} \ebm  + \mathcal{O}\left(\frac{1}{n^{3/2}}\right)\right) \\ 
& \hspace{.2in} \times e^{n(i\pi-\mathfrak{c})\sigma_3/2} e^{-i\pi\tau\sigma_3} n^{k\sigma_3/2}{\bf F}_k^{(\text{out})}(z)^{-1}.
\end{split}
\endeq
In the last equality we used that $W(z)$ is independent of $n$.  Since the same 
is true for ${\bf F}_k^{(\text{out})}(z)$, we have 
\eq
\left.{\bf V}_\text{Herm}^{({\bf X})}(z)\right\vert_{\partial\mathbb{D}_{\widetilde{z}_c}} = \mathbb{I} + \bbm 0 &  \mathcal{O}\left(\frac{e^{n\mathfrak{c}}}{n^{k+(1/2)}}\right) \\ \mathcal{O}\left(\frac{n^{k-(1/2)}}{e^{n\mathfrak{c}}}\right) & 0 \ebm + \mathcal{O}\left(\frac{1}{n}\right)+ \mathcal{O}\left(\frac{e^{n\mathfrak{c}}}{n^{k+(3/2)}}\right) + \mathcal{O}\left(\frac{n^{k-(3/2)}}{e^{n\mathfrak{c}}}\right)
\endeq
if $k\geq 1$, and 
\eq
\left.{\bf V}_\text{Herm}^{({\bf X})}(z)\right\vert_{\partial\mathbb{D}_{\widetilde{z}_c}} = \mathbb{I} + \bbm 0 &  \mathcal{O}\left(\frac{e^{n\mathfrak{c}}}{n^{k+(1/2)}}\right) \\ 0 & 0 \ebm + \mathcal{O}\left(\frac{e^{n\mathfrak{c}}}{n^{k+(3/2)}}\right) + \mathcal{O}\left(\frac{n^{k-(3/2)}}{e^{n\mathfrak{c}}}\right)
\endeq
if $k=0$ (since $\kappa_{-1}\equiv 0$).  
For these jumps to be close to the identity, we need 
\eq
\left(k-\frac{1}{2}\right)\frac{\log n}{n} < \mathfrak{c} < \left(k+\frac{1}{2}\right)\frac{\log n}{n} \hspace{.1in} (k\geq 0).
\endeq
Note that if $\mathfrak{c}<-\frac{1}{2n}\log n$ then the analysis in the 
subcritical region goes through without change.  
Along with \eqref{c-def}, we now see that for given $n$ and $\mu$ the correct 
choice of $k$ is the integer such that \eqref{k-condition} holds.  
However, if $(\mu-\mu_c)2\pi n/\log n$ is exactly a positive half-integer, then it 
is not possible to choose $k$ so the jump is small on this circle.  Define 
\eq
\label{Qpm-def}
Q_+ \equiv Q_+(T,\mu,\tau,n) := \frac{e^{2i\pi \tau}}{2\pi i\kappa_k^2}\frac{(-1)^ne^{n\mathfrak{c}}}{n^{k+(1/2)}}, \quad Q_- \equiv Q_-(T,\mu,\tau,n) := \frac{2\pi i\kappa_{k-1}^2}{e^{2i\pi\tau}}\frac{n^{k-(1/2)}}{(-1)^n e^{n\mathfrak{c}}}.  
\endeq
We have
\eq
\label{Xherm-jump-expansion}
\left.{\bf V}_\text{Herm}^{({\bf X})}(z)\right\vert_{\partial\mathbb{D}_{\widetilde{z}_c}} = \mathbb{I} - \frac{1}{W(z)} {\bf F}_k^{(\rm{out})}(z) \bbm 0 & Q_+ \\ Q_- & 0 \ebm {\bf F}_k^{(\rm{out})}(z)^{-1} + \mathcal{O}\left(\frac{1}{n}\right).
\endeq
Note that
\eq
\label{Qpm-in-good-regions}
\begin{split}
Q_+ & = \mathcal{O}(n^{-1/2}), \quad \mathfrak{c} \le k\frac{\log n}{n}, \\
Q_- & = \mathcal{O}(n^{-1/2}), \quad k\frac{\log n}{n} \leq \mathfrak{c}.
\end{split}
\endeq
Disregarding all terms of $\mathcal{O}(n^{-1/2})$ or smaller, we arrive at the 
following approximate Riemann--Hilbert problem.
\begin{rhp}[The parametrix for the error]
\label{rhp-error-parametrix}
Determine the $2\times 2$ matrix-valued function ${\bf Y}_n^{(k)}(z)$ satisfying:
\begin{itemize}
\item[]{\bf Analyticity:} ${\bf Y}_n^{(k)}(z)$ is analytic for 
$z\in\mathbb{C}\backslash\partial\mathbb{D}_{\widetilde{z}_c}$ with 
H\"older-continuous boundary values on $\partial\mathbb{D}_{\widetilde{z}_c}$.
\item[]{\bf Normalization:}  
\eq
{\bf Y}_n^{(k)}(z) = \mathbb{I}+\mathcal{O}\left(\frac{1}{z}\right) \text{ as } z\to\infty.
\endeq
\item[]{\bf Jump condition:}  Orienting the jump contour negatively, the 
solution satisfies 
\eq
\label{Y-jump}
{\bf Y}_{n+}^{(k)}(z)={\bf Y}_{n-}^{(k)}(z){\bf V}_{\rm Herm}^{({\bf Y})}(z) \equiv {\bf Y}_{n-}^{(k)}(z) \left(\mathbb{I} - \frac{1}{W(z)} {\bf F}_k^{(\rm{out})}(z) {\bf Q} {\bf F}_k^{(\rm{out})}(z)^{-1} \right), \quad z\in\partial\mathbb{D}_{\widetilde{z}_c},
\endeq
where
\eq
\label{Q-def}
{\bf Q}:= \begin{cases} \bbm 0 & 0 \\ Q_- & 0 \ebm, & \displaystyle \left(k-\frac{1}{2}\right)\frac{\log n}{n} < \mathfrak{c} \le k\frac{\log n}{n}, \vspace{.05in} \\ \bbm 0 & Q_+ \\ 0 & 0 \ebm, & \displaystyle k\frac{\log n}{n} < \mathfrak{c} \le \left(k+\frac{1}{2}\right)\frac{\log n}{n}. \end{cases}
\endeq
\end{itemize}
\end{rhp}
The advantage of the function ${\bf Y}_n^{(k)}(z)$ is that the ratio
\eq
\label{Z-def}
{\bf Z}_n^{(k)}(z) := {\bf X}_n^{(k)}(z){\bf Y}_n^{(k)}(z)^{-1}
\endeq
satisfies the jump condition
\eq
\label{Z-hermite-jump}
{\bf Z}_{n+}^{(k)}(z) = {\bf Z}_{n-}^{(k)}(z) {\bf V}_\text{Herm}^{({\bf Z})}(z), \quad z\in\Sigma_\text{Herm}^{({\bf Z})}\equiv\Sigma_\text{Herm}^{({\bf X})},
\endeq
where ${\bf V}_\text{Herm}^{({\bf Z})}(z)$ is uniformly $o(1)$ as $n\to\infty$ 
(in the Hermite regime).  The jump $\Sigma_\text{Herm}^{({\bf Z})}$ is clearly 
controlled for 
$z\in\Sigma_\text{Herm}^{({\bf Z})}\backslash\partial\mathbb{D}_{\widetilde{z}_c}$ 
(since ${\bf V}_\text{Herm}^{({\bf X})}(z)$ is controlled and 
${\bf Y}_n^{(k)}(z)$ has no jump here).  On the other hand, we also have 
\eq
\begin{split}
\left.{\bf V}_\text{Herm}^{({\bf Z})}(z)\right\vert_{\partial\mathbb{D}_{\widetilde{z}_c}} & = {\bf Z}_{n-}^{(k)}(z)^{-1}{\bf Z}_{n+}^{(k)}(z) \\ 
  & = {\bf Y}_{n-}^{(k)}(z){\bf X}_{n-}^{(k)}(z)^{-1}{\bf X}_{n+}(z){\bf Y}_{n+}^{(k)}(z)^{-1} \\ 
  & = {\bf Y}_{n-}^{(k)}(z){\bf V}_\text{Herm}^{({\bf X})}(z){\bf V}_\text{Herm}^{({\bf Y})}(z)^{-1}{\bf Y}_{n-}^{(k)}(z)^{-1}.
\end{split}
\endeq
Thus, recalling \eqref{Xherm-jump-expansion} and \eqref{Y-jump}, 
for $z\in\mathbb{D}_{\widetilde{z}_c}$, 
\eq
\label{VZ-n-expansion}
{\bf V}_\text{Herm}^{({\bf Z})}(z) = \mathbb{I} - \frac{1}{W(z)}{\bf Y}_{n-}^{(k)}(z){\bf F}_k^{(\text{out})}(z) \widehat{\bf Q} {\bf F}_k^{(\text{out})}(z)^{-1}{\bf Y}_{n-}^{(k)}(z)^{-1} + \mathcal{O}\left(\frac{1}{n}\right), 
\endeq
where we have defined
\eq
\label{Qhat-def}
\widehat{\bf Q} := \begin{cases} \bbm 0 & Q_+ \\ 0 & 0 \ebm, & \displaystyle \left(k-\frac{1}{2}\right)\frac{\log n}{n} < \mathfrak{c} \le k\frac{\log n}{n}, \vspace{.05in} \\ \bbm 0 & 0 \\ Q_- & 0 \ebm, & \displaystyle k\frac{\log n}{n} < \mathfrak{c} \le \left(k+\frac{1}{2}\right)\frac{\log n}{n}. \end{cases}
\endeq
From \eqref{Qpm-in-good-regions} we see 
$\widehat{\bf Q} = \mathcal{O}(n^{-1/2})$, and thus 
\eq
\label{VZ-bound}
{\bf V}_\text{Herm}^{({\bf Z})}(z) = \mathcal{O}(n^{-1/2}) \quad \text{for all }
z\in\Sigma^{({\bf Z})}.
\endeq

We now solve Riemann--Hilbert Problem \ref{rhp-error-parametrix} exactly 
following \cite{BuckinghamM:2015}, \S3.5.2.  The function 
\eq
\label{Ytilde}
{\bf \widetilde{Y}}_n^{(k)}(z) := \begin{cases} {\bf Y}_n^{(k)}(z), & z\in\mathbb{C}\backslash\mathbb{D}_{\widetilde{z}_c}, \\ \displaystyle {\bf Y}_n^{(k)}(z)\left(\mathbb{I} - \frac{1}{W(z)} {\bf F}_k^{(\rm{out})}(z) {\bf Q} {\bf F}_k^{(\rm{out})}(z)^{-1} \right), & z\in\mathbb{D}_{\widetilde{z}_c} \end{cases} 
\endeq
is the (meromorphic) continuation of ${\bf Y}_n^{(k)}(z)$ from the exterior to 
the interior of $\mathbb{D}_{\widetilde{z}_c}$.  This function is meromorphic 
in the entire complex $z$-plane with exactly one simple pole at 
$\widetilde{z}_c$.  Furthermore, 
$\lim_{z\to\infty}{\bf Y}_n^{(k)}(z)=\mathbb{I}$, so we can write 
\eq
\label{Ytilde-ansatz}
{\bf \widetilde{Y}}_n^{(k)}(z) = \mathbb{I} + \frac{1}{z-\widetilde{z}_c}{\bf B},
\endeq
where ${\bf B}$ is independent of $z$.  
Our next goal is to determine the $2\times 2$ matrix 
${\bf B}\equiv{\bf B}(T,\mu,\tau,n)$.  Since ${\bf Y}_n^{(k)}(z)$ is analytic 
at $z=\widetilde{z}_c$, we have 
\eq
\label{Yn-explicit}
{\bf Y}_n^{(k)}(z) = \left( \mathbb{I} + \frac{1}{z-\widetilde{z}_c}{\bf B} \right) \left(\mathbb{I} + \frac{1}{W(z)} {\bf F}_k^{(\rm{out})}(z) {\bf Q} {\bf F}_k^{(\rm{out})}(z)^{-1} \right) = \mathcal{O}(1) \text{ for } z\to\widetilde{z}_c.
\endeq
We expand the middle quantity at $z=\widetilde{z}_c$.  Write
\eq
\label{Fout-expansion}
{\bf F}_k^{(\text{out})}(z) = {\bf F}_0 + {\bf F}_1(z-\widetilde{z}_c) + \mathcal{O}\left((z-\widetilde{z}_c)^2\right)
\endeq
and
\eq
\begin{split}
&W(z) = (z-\widetilde{z}_c)W_1 + (z-\widetilde{z}_c)^2W_2 + \mathcal{O}\left((z-\widetilde{z}_c)^3\right) \\
&\Rightarrow \frac{1}{W(z)} = \frac{1}{(z-\widetilde{z}_c)W_1} - \frac{W_2}{W_1^2} + \mathcal{O}(z-\widetilde{z}_c),
\end{split}
\endeq
where ${\bf F}_0$, ${\bf F}_1$, $W_1$, and $W_2$ are independent of $z$.  Note 
that 
\eq
\label{W1-def}
W_1 = \sqrt{\frac{T}{2\pi}}\sqrt[4]{\pi^2-T}
\endeq
from \eqref{W-near-ztildec}.  Then we must have 
\eq
\frac{{\bf B}{\bf F}_0{\bf Q}{\bf F}_0^{-1}}{(z-\widetilde{z}_c)^2 W_1} + \frac{1}{z-\widetilde{z}_c}\left( {\bf B} + \frac{{\bf F}_0{\bf Q}{\bf F}_0^{-1}}{W_1} + \frac{{\bf B}{\bf F}_1{\bf Q}{\bf F}_0^{-1}}{W_1} - \frac{{\bf B}{\bf F}_0{\bf Q}{\bf F}_0^{-1}{\bf F}_1{\bf F}_0^{-1}}{W_1} - \frac{W_2{\bf B}{\bf F}_0{\bf Q}{\bf F}_0^{-1}}{W_1^2}  \right) = {\bf 0}.
\endeq
Separating powers of $z-\widetilde{z}_c$ (and using the invertibility of 
${\bf F}_0$, which can be checked via explicit computation) gives the system of 
equations 
\eq
{\bf B}{\bf F}_0{\bf Q} = {\bf 0}, \quad 
{\bf B}{\bf F}_0 + \frac{1}{W_1}{\bf F}_0{\bf Q} + \frac{1}{W_1}{\bf B}{\bf F}_1{\bf Q} = {\bf 0}.
\endeq
The first equation shows that the first column of ${\bf B}{\bf F}_0$ is zero 
when ${\bf Q}$ is strictly upper triangular, and the second column of 
${\bf B}{\bf F}_0$ is zero when ${\bf Q}$ is strictly lower triangular.  
Solving the second equation then gives 
\eq
\label{B-formula}
{\bf B} = \begin{cases} \displaystyle \frac{-{\bf F}_0{\bf Q}{\bf F}_0^{-1}}{W_1 + [{\bf F}_0^{-1}{\bf F}_1]_{12}Q_-}, & \displaystyle \left(k-\frac{1}{2}\right)\frac{\log n}{n} < \mathfrak{c} \le k\frac{\log n}{n}, \vspace{.05in} \\ \displaystyle \frac{-{\bf F}_0{\bf Q}{\bf F}_0^{-1}}{W_1 + [{\bf F}_0^{-1}{\bf F}_1]_{21}Q_+}, & \displaystyle k\frac{\log n}{n} < \mathfrak{c} \le \left(k+\frac{1}{2}\right)\frac{\log n}{n}. \end{cases}
\endeq
We will need the (11)-entry of ${\bf B}$.  Start with 
\eq
[{\bf B}]_{11} = \begin{cases} \displaystyle \frac{-\frac{Q_-}{W_1}[{\bf F}_0]_{12}[{\bf F}_0]_{22}}{1 + \frac{Q_-}{W_1}[{\bf F}_0^{-1}{\bf F}_1]_{12}}, & \displaystyle \left(k-\frac{1}{2}\right)\frac{\log n}{n} < \mathfrak{c} \le k\frac{\log n}{n}, \vspace{.05in} \\ \displaystyle \frac{\frac{Q_+}{W_1}[{\bf F}_0]_{11}[{\bf F}_0]_{21}}{1 + \frac{Q_+}{W_1}[{\bf F}_0^{-1}{\bf F}_1]_{21}}, & \displaystyle k\frac{\log n}{n} < \mathfrak{c} \le \left(k+\frac{1}{2}\right)\frac{\log n}{n}. \end{cases}
\endeq
Observe that ${\bf F}_0\equiv{\bf F}_k^{(\text{out})}(\widetilde{z}_c)$.  
We introduce the notations
\eq
\label{gamma-star}
\gamma_* \equiv \gamma_*(T) := \gamma(\widetilde{z}_c) = \frac{1}{\sqrt{\pi}}\left( \sqrt{\pi^2-T} + i\sqrt{T} \right)^{1/2}
\endeq
(here the square root is the principal branch) and  
\eq
\label{alpha-on-W-at-zc}
\frac{\alpha_*}{W_*} := \lim_{z\to\widetilde{z}_c}\frac{\alpha(z)}{W(z)} = \frac{-T}{(2\pi)^{3/2}\sqrt[4]{\pi^2-T}}, 
\endeq
obtained via the local expansions near $z=\widetilde{z}_c$ in Lemma 
\ref{tau-properties-lem}(c) for $\alpha(z)$ and Equation \eqref{W-near-ztildec} 
for $W(z)$.  Then from \eqref{Fk-def}, \eqref{Mk-out}, and \eqref{M0-out} we 
have 
\eq
\label{F0-def}
{\bf F}_0 = \alpha_\infty^{-k\sigma_3} \bbm \displaystyle \frac{\gamma_*+\gamma_*^{-1}}{2} & \displaystyle \frac{\gamma_*-\gamma_*^{-1}}{-2i} \\ \displaystyle \frac{\gamma_*-\gamma_*^{-1}}{2i} & \displaystyle \frac{\gamma_*+\gamma_*^{-1}}{2} \ebm \left(\frac{\alpha_*}{W_*}\right)^{k\sigma_3}.
\endeq
Together, the previous three equations yield
\eq
\label{F11F21-F12F22}
\begin{split}
[{\bf F}_0]_{11} [{\bf F}_0]_{21} & = -\frac{i}{4}(\gamma_*^2-\gamma_*^{-2}) \left(\frac{\alpha_*}{W_*}\right)^{2k} = \frac{T^{(4k+1)/2}}{(2\pi)^{3k+1}(\pi^2-T)^{k/2}}, \\ [{\bf F}_0]_{12}[{\bf F}_0]_{22} & =  \frac{i}{4}(\gamma_*^2-\gamma_*^{-2})\left(\frac{\alpha_*}{W_*}\right)^{-2k} = -\frac{(2\pi)^{3k-1}(\pi^2-T)^{k/2}}{T^{(4k-1)/2}}.
\end{split}
\endeq
At this point we define the quantities 
\eq
\label{Rpm-def}
\begin{split}
R_+ \equiv R_+(T,\mu,n) & := \frac{iT^{2k}}{(2\pi)^{(6k+3)/2}(\pi^2-T)^{(2k+1)/4}\kappa_k^2}\cdot\frac{(-1)^{n+1}e^{n\mathfrak{c}}}{n^{k+(1/2)}}, \\ 
R_- \equiv R_-(T,\mu,n) & :=\frac{(2\pi)^{(6k+1)/2}(\pi^2-T)^{(2k-1)/4}\kappa_{k-1}^2}{iT^{2k}}\cdot\frac{n^{k-(1/2)}}{(-1)^{n+1}e^{n\mathfrak{c}}},
\end{split}
\endeq
so that (using \eqref{Qpm-def}, \eqref{W1-def}, and \eqref{F11F21-F12F22})  
\eq
\label{QWFF-ito-Rpm}
\frac{Q_+}{W_1}[{\bf F}_0]_{11}[{\bf F}_0]_{21} = R_+e^{2\pi i\tau}, \quad -\frac{Q_-}{W_1}[{\bf F}_0]_{12}[{\bf F}_0]_{22} = R_-e^{-2\pi i\tau}.
\endeq
We also have 
\eq
\begin{split}
{\bf F}_1\equiv \left.\frac{d{\bf F}_k^{(\text{out})}(z)}{dz}\right|_{z=\widetilde{z}_c} = & \alpha_\infty^{-k\sigma_3} \frac{\gamma^{\hspace{.02in}\prime}(\widetilde{z}_c)}{\gamma_*} \bbm \displaystyle \frac{\gamma_*-\gamma_*^{-1}}{2} & \displaystyle \frac{\gamma_*+\gamma_*^{-1}}{-2i} \\ \displaystyle \frac{\gamma_*+\gamma_*^{-1}}{2i} & \displaystyle \frac{\gamma_*-\gamma_*^{-1}}{2} \ebm \left(\frac{\alpha_*}{W_*}\right)^{k\sigma_3} \\ 
 & + \alpha_\infty^{-k\sigma_3} \bbm \displaystyle \frac{\gamma_*+\gamma_*^{-1}}{2} & \displaystyle \frac{\gamma_*-\gamma_*^{-1}}{-2i} \\ \displaystyle \frac{\gamma_*-\gamma_*^{-1}}{2i} & \displaystyle \frac{\gamma_*+\gamma_*^{-1}}{2} \ebm \frac{d}{dz}\left.\left(\left(\frac{\alpha(z)}{W(z)}\right)^{k\sigma_3}\right)\right|_{z=\widetilde{z}_c}.
\end{split}
\endeq
Direct calculation shows 
\eq
\frac{\gamma^{\hspace{.02in}\prime}(\widetilde{z}_c)}{\gamma_*} = \frac{T^{3/2}}{4\pi^2}.
\endeq
The preceding four equations show
\eq
\label{F0invF1-12}
\begin{split}
[{\bf F}_0^{-1}{\bf F}_1]_{12} & = i\frac{\gamma^{\hspace{.02in}\prime}(\widetilde{z}_c)}{\gamma_*}\left(\frac{\alpha_*}{W_*}\right)^{-2k} = \frac{(2\pi)^{3k-2}(\pi^2-T)^{k/2}i}{T^{(4k-3)/2}},\\
[{\bf F}_0^{-1}{\bf F}_1]_{21} & = -i\frac{\gamma^{\hspace{.02in}\prime}(\widetilde{z}_c)}{\gamma_*}\left(\frac{\alpha_*}{W_*}\right)^{2k}  = \frac{T^{(4k+3)/2}}{(2\pi)^{3k+2}(\pi^2-T)^{k/2}i}.
\end{split}
\endeq
Thus, from \eqref{Qpm-def}, \eqref{W1-def}, \eqref{Rpm-def}, and 
\eqref{F0invF1-12},
\eq
\label{QWF0F1}
\frac{Q_+}{W_1}[{\bf F}_0^{-1}{\bf F}_1]_{21} = \frac{TR_+}{2\pi i}e^{2\pi i\tau}
\quad\text{and}\quad 
\frac{Q_-}{W_1}[{\bf F}_0^{-1}{\bf F}_1]_{12} = -\frac{TR_-}{2\pi i}e^{-2\pi i\tau}.
\endeq
Therefore
\eq
\label{B11-formula}
[{\bf B}]_{11} = \begin{cases} \displaystyle \frac{R_-e^{-2\pi i\tau}}{1 - \frac{TR_-}{2\pi i}e^{-2\pi i\tau}}, & \displaystyle \left(k-\frac{1}{2}\right)\frac{\log n}{n} < \mathfrak{c} \le k\frac{\log n}{n}, \vspace{.05in} \\ \displaystyle \frac{R_+e^{2\pi i\tau}}{1 + \frac{TR_+}{2\pi i}e^{2\pi i\tau}}, & \displaystyle k\frac{\log n}{n} < \mathfrak{c} \le \left(k+\frac{1}{2}\right)\frac{\log n}{n}. \end{cases}
\endeq
Later we will need that fact, that follows directly from \eqref{B-formula} and 
\eqref{Q-def}, that 
\eq
\label{B-ito-Qpm}
{\bf B} = \begin{cases} \mathcal{O}(Q_-), & \displaystyle \left(k-\frac{1}{2}\right)\frac{\log n}{n} < \mathfrak{c} \le k\frac{\log n}{n}, \vspace{.05in} \\ \mathcal{O}(Q_+), & \displaystyle k\frac{\log n}{n} < \mathfrak{c} \le \left(k+\frac{1}{2}\right)\frac{\log n}{n}. \end{cases}
\endeq
Putting together \eqref{Ytilde} and \eqref{Ytilde-ansatz} shows
\eq
\label{Y-def}
{\bf Y}_n^{(k)}(z) := \begin{cases} \left( \mathbb{I} + \frac{1}{z-\widetilde{z}_c}{\bf B} \right), & z\in\mathbb{C}\backslash\mathbb{D}_{\widetilde{z}_c}, \\ \left( \mathbb{I} + \frac{1}{z-\widetilde{z}_c}{\bf B} \right)\left(\mathbb{I} + \frac{1}{W(z)} {\bf F}_k^{(\rm{out})}(z) {\bf Q} {\bf F}_k^{(\rm{out})}(z)^{-1} \right), & z\in\mathbb{D}_{\widetilde{z}_c} .\end{cases} 
\endeq

\subsection{Winding numbers in the Hermite regime}
\label{subsec-hermite-winding}
We now determine the winding numbers in the Hermite regime.
\begin{proof}[Proof of Theorem \ref{thm-hermite-winding} (Hermite regime 
winding number asymptotics)]
Recalling Lemma \ref{winding-probs-ito-P}, to compute the winding probabilities 
we will first determine $[{\bf P}_{n,1}]_{11}$.  Repeating the logic leading to 
Equation \eqref{P-ito-XM-subcrit} in the subcritical case shows
\begin{equation}
\label{P-ito-ZYM}
\mathbf P_n(z) =\bbm 1 & 0 \\ 0 & -2\pi i \ebm^{-1} e^{n\ell\sg_3/2} \mathbf Z_n^{(k)}(z)\mathbf Y_n^{(k)}(z) \mathbf M^{(k)}(z) e^{n(g(z) - \ell/2)\sg_3} \bbm 1 & 0 \\ 0 & -2\pi i \ebm
\end{equation}
for $z\in\mathbb{C}\backslash\{z|-\delta\leq \Im z \leq \mu+\epsilon\}$.
We have the large-$z$ expansions 
\eq
{\bf Y}_n^{(k)}(z) = \mathbb{I} + \frac{\bf B}{z} + \mathcal{O}\left(\frac{1}{z^2}\right), \quad {\bf Z}_n^{(k)}(z) = \mathbb{I} + \frac{{\bf Z}_{n,1}^{(k)}}{z} + \mathcal{O}\left(\frac{1}{z^2}\right),
\endeq
where the relevant entry of ${\bf Z}_{n,1}^{(k)}$ will be computed below.  
Therefore, using \eqref{g1} for $g_1$,
\eq
\label{P-large-z}
\begin{split}
\mathbb{I}+\frac{\mathbf P_{n,1}}{z}+\mathcal{O}\left(\frac{1}{z^2}\right) = 
 & \bbm 1 & 0 \\ 0 & -2\pi i \ebm^{-1} e^{n\ell\sg_3/2} \left(\mathbb{I}+\frac{\mathbf Z_{n,1}^{(k)} + {\bf B} + {\bf M}_1^{(k)} }{z}+\mathcal{O}\left(\frac{1}{z^2}\right)\right) \\
 &  \times \exp\left(-n\left(\frac{i\mu}{z}+\mathcal{O}\left(\frac{1}{z^2}\right)\right)\sg_3\right) e^{-(n\ell/2)\sg_3} \bbm 1 & 0 \\ 0 & -2\pi i \ebm.
\end{split}
\endeq
We see
\eq
\label{Pn1-11}
[\mathbf P_{n,1}]_{11} = -in\mu + \left[\mathbf Z_{n,1}^{(k)}\right]_{11} + \left[{\bf B}\right]_{11} + \left[\mathbf M_1^{(k)}\right]_{11}.
\endeq
We have already obtained a formula for $[{\bf B}]_{11}$ in \eqref{B11-formula}.  
Our next step is to find $\left[{\bf M}_1^{(k)}\right]_{11}$.  Now 
${\bf M}^{(k)}(z)={\bf M}_k^\text{(out)}(z)$ for $z$ sufficiently large by 
\eqref{Mk-def}.  From \eqref{Mk-out}, \eqref{M0-out}, \eqref{gamma-def}, and 
Lemma \ref{tau-properties-lem}(e), 
\eq
\left[{\bf M}_k^\text{(out)}(z)\right]_{11} = \frac{\gamma(z)+\gamma(z)^{-1}}{2}\left(\frac{\alpha(z)}{\alpha_\infty}\right)^k = \left(1+\mathcal{O}\left(\frac{1}{z^2}\right)\right)\left(1+\frac{2\pi ik}{Tz}+\mathcal{O}\left(\frac{1}{z^2}\right)\right).
\endeq
Therefore
\eq
\label{Mk1-11}
\left[{\bf M}_1^{(k)}\right]_{11} = \frac{2i\pi k}{T}.
\endeq

Now we find $\left[{\bf Z}_{n,1}^{(k)}\right]_{11}$.  As we have 
shown 
${\bf V}_\text{Herm}^{({\bf Z})}(z) = \mathcal{O}(n^{-1/2})$ for 
$z\in\partial\mathbb{D}_{\widetilde{z}_c}$ and is exponentially small for 
$z\in\Sigma_\text{Herm}^{({\bf Z})}\backslash\partial\mathbb{D}_{\widetilde{z}_c}$, 
it follows that ${\bf Z}_n(z)$ can be solved by a convergent Neumann series 
(see, for example, \cite[\S5]{Liechty:2012} for a similar analysis), and 
in particular
\eq
\label{Zk-integral-form}
{\bf Z}_{n,1}^{(k)} = \frac{-1}{2\pi i}\oint_{\partial\mathbb{D}_{\widetilde{z}_c}}\left({\bf V}_\text{Herm}^{({\bf Z})}(u)-\mathbb{I}\right) du + \mathcal{O}\left(\frac{1}{n}\right).
\endeq
A residue calcuation using \eqref{Zk-integral-form}, \eqref{VZ-n-expansion}, 
\eqref{Fout-expansion}, \eqref{W1-def}, and the fact that 
$\partial\mathbb{D}_{\widetilde{z}_c}$ is negatively oriented gives 
\eq
{\bf Z}_{n,1}^{(k)} = \frac{-1}{W_1}{\bf Y}_n^{(k)}(\widetilde{z}_c){\bf F}_0\widehat{\bf Q}{\bf F}_0^{-1}{\bf Y}_n^{(k)}(\widetilde{z}_c)^{-1} + \mathcal{O}\left(\frac{1}{n}\right).
\endeq
Equations \eqref{Yn-explicit} and \eqref{B-ito-Qpm} show us that 
\eq
{\bf Y}_n^{(k)}(\widetilde{z}_c) = \begin{cases} \mathbb{I} + \mathcal{O}(Q_-), & (k-\frac{1}{2})\frac{\log n}{n} < \mathfrak{c}<k\frac{\log n}{n}, \vspace{.05in} \\ \mathbb{I} + \mathcal{O}(Q_+), & k\frac{\log n}{n} < \mathfrak{c} < (k+\frac{1}{2})\frac{\log n}{n}. \end{cases}
\endeq
The definition \eqref{Qhat-def} of $\widehat{\bf Q}$ and the fact that 
$Q_+Q_-=\mathcal{O}(n^{-1})$ lead to 
\eq
\label{Z1-ito-FQFinv}
{\bf Z}_{n,1}^{(k)} = \frac{-1}{W_1}{\bf F}_0\widehat{\bf Q}{\bf F}_0^{-1} + \mathcal{O}\left(\frac{1}{n}\right).
\endeq
Direct calculation (using $\det{\bf F}_0\equiv 1$, which follows from 
${\bf F}_0 \equiv {\bf F}_k^{(\text{out})}(\widetilde{z}_c)$ and 
$\det{\bf F}_k^{(\text{out})}(z)\equiv 1$) now gives
\eq
\label{Z11-expression}
\left[{\bf Z}_{n,1}^{(k)}\right]_{11} = \begin{cases} \frac{Q_+}{W_1}\left[{\bf F}_0\right]_{11}\left[{\bf F}_0\right]_{21} + \mathcal{O}\left(\frac{1}{n}\right), & (k-\frac{1}{2})\frac{\log n}{n} < \mathfrak{c} \le k\frac{\log n}{n}, \vspace{.05in} \\ -\frac{Q_-}{W_1}\left[{\bf F}_0\right]_{12}\left[{\bf F}_0\right]_{22} + \mathcal{O}\left(\frac{1}{n}\right), & k\frac{\log n}{n} < \mathfrak{c} \le (k+\frac{1}{2})\frac{\log n}{n}. \end{cases}
\endeq
Equation \eqref{QWFF-ito-Rpm} gives
\eq
\left[{\bf Z}_{n,1}^{(k)}\right]_{11} = \begin{cases} R_+e^{2\pi i\tau} + \mathcal{O}\left(\frac{1}{n}\right), & (k-\frac{1}{2})\frac{\log n}{n} < \mathfrak{c}\le k\frac{\log n}{n}, \vspace{.05in} \\ R_-e^{-2\pi i\tau} + \mathcal{O}\left(\frac{1}{n}\right), & k\frac{\log n}{n} < \mathfrak{c} \le  (k+\frac{1}{2})\frac{\log n}{n}. \end{cases}
\endeq
This is similar to the expression \eqref{B11-formula} for $[{\bf B}]_{11}$.  
In fact, since (cf. \eqref{Qpm-in-good-regions})
\eq
\begin{split}
R_+ & = \mathcal{O}(n^{-1/2}), \quad \left(k-\frac{1}{2}\right)\frac{\log n}{n} < \mathfrak{c} \le  k\frac{\log n}{n}, \\
R_- & = \mathcal{O}(n^{-1/2}), \quad k\frac{\log n}{n} < \mathfrak{c} \le  \left(k+\frac{1}{2}\right)\frac{\log n}{n},
\end{split}
\endeq
we can add appropriate denominators with no extra error term:
\eq
\label{Zn1-11}
\left[{\bf Z}_{n,1}^{(k)}\right]_{11} = \begin{cases} \displaystyle \frac{R_+e^{2\pi i\tau}}{1 + \frac{TR_+}{2\pi i}e^{2\pi i\tau}} + \mathcal{O}\left(\frac{1}{n}\right), & (k-\frac{1}{2})\frac{\log n}{n} < \mathfrak{c}\le k\frac{\log n}{n}, \vspace{.05in} \\ \displaystyle \frac{R_-e^{-2\pi i\tau}}{1 - \frac{TR_-}{2\pi i}e^{-2\pi i\tau}} + \mathcal{O}\left(\frac{1}{n}\right), & k\frac{\log n}{n} < \mathfrak{c} \le  (k+\frac{1}{2})\frac{\log n}{n}. \end{cases}
\endeq
The advantage is that now, from \eqref{Pn1-11}, \eqref{B11-formula}, 
\eqref{Mk1-11}, and \eqref{Zn1-11}, $[{\bf P}_{n,1}]_{11}$ takes the simplified 
form 
\eq
\label{Pn1-11-formula}
[{\bf P}_{n,1}]_{11} = -in\mu + \frac{2i\pi k}{T} + \frac{R_+e^{2\pi i\tau}}{1 + \frac{TR_+}{2\pi i}e^{2\pi i\tau}} + \frac{R_-e^{-2\pi i\tau}}{1 - \frac{TR_-}{2\pi i}e^{-2\pi i\tau}} + \mathcal{O}\left(\frac{1}{n}\right).
\endeq
The $\mathcal{O}(n^{-1})$ error terms are uniformly bounded in $\tau$ (and 
therefore integrable with respect to $\tau$).  Combining Proposition 
\ref{Hankel-integral-prop}, \eqref{IP5a}, and \eqref{Pn1-11-formula} shows 
\eq
\begin{split}
\log &\left(\frac{\Hankel_n(T,\mu,\tau) }{\Hankel_n(T,\mu,\hsgn{n})}\right) = \int_{\hsgn{n}}^\tau \left(inT\mu + T [\mathbf P_{n,1}(T,\mu,v)]_{11}\right)\,dv \\ 
& \hspace{.2in} = \int_{\hsgn{n}}^\tau \left[2\pi ik + \frac{TR_+e^{2\pi iv}}{1+\frac{TR_+}{2\pi i}e^{2\pi iv}} + \frac{TR_-e^{-2\pi iv}}{1-\frac{TR_-}{2\pi i}e^{-2\pi iv}}\right] \,dv + \mathcal{O}\left(\frac{1}{n}\right) \\
& \hspace{.2in} = 2ik\pi(\tau-\hsgn{n}) + \log\left(1+\frac{TR_+}{2\pi i}e^{2\pi i\tau}\right) - \log\left(1+\frac{TR_+}{2\pi i}e^{2\pi i\epsilon(n)}\right) \\ 
& \hspace{.45in} + \log\left(1-\frac{TR_-}{2\pi i}e^{-2\pi i\tau}\right) - \log\left(1-\frac{TR_-}{2\pi i}e^{-2\pi i\epsilon(n)}\right) + \mathcal{O}\left(\frac{1}{n}\right).
\end{split}
\endeq
Exponentiating and using $R_+R_-=\mathcal{O}(n^{-1})$ from \eqref{Rpm-def} and 
$e^{2\pi i\hsgn{n}}=(-1)^{n+1}$ from \eqref{eq:def_hsgn} gives
\eq
\frac{\Hankel_n(T,\mu,\tau) }{\Hankel_n(T,\mu,\hsgn{n})} = \frac{(-1)^{-(n+1)k}}{1 + \frac{(-1)^{n+1}T}{2\pi i}(R_+-R_-)}  \left(1 + \frac{TR_+}{2\pi i}e^{2\pi i\tau} - \frac{TR_-}{2\pi i}e^{-2\pi i\tau} \right)e^{2ik\pi\tau} + \mathcal{O}\left(\frac{1}{n}\right).
\endeq
We can now compute the winding numbers using \eqref{eq:total_offset_formula}:
\eq
\begin{split}
& \Prob(\mathcal{W}_n(T,\mu) = \om) \\
& = \frac{(-1)^{(n+1)(\omega-k)}}{1 + \frac{(-1)^{n+1}T}{2\pi i}(R_+-R_-)} \int_0^1 \left(1 + \frac{TR_+}{2\pi i}e^{2\pi i\tau} - \frac{TR_-}{2\pi i}e^{-2\pi i\tau} \right)e^{2\pi i\tau(k-\omega)}d\tau + \mathcal{O}\left(\frac{1}{n}\right) \\
& = \begin{cases} 
\displaystyle \frac{\frac{1}{2\pi i}(-1)^nTR_-}{1 + \frac{1}{2\pi i}(-1)^{n+1}TR_+ + \frac{1}{2\pi i}(-1)^nTR_-} + \mathcal{O}\left(\frac{1}{n}\right), & \omega=k-1, \vspace{.05in} \\ 
\displaystyle \frac{1}{1 + \frac{1}{2\pi i}(-1)^{n+1}TR_+ + \frac{1}{2\pi i}(-1)^nTR_-} + \mathcal{O}\left(\frac{1}{n}\right), & \omega=k, \vspace{.05in} \\ 
\displaystyle \frac{\frac{1}{2\pi i}(-1)^{n+1}TR_+}{1 + \frac{1}{2\pi i}(-1)^{n+1}TR_+ + \frac{1}{2\pi i}(-1)^nTR_-} + \mathcal{O}\left(\frac{1}{n}\right), & \omega=k+1, \vspace{.1in} \\ 
\displaystyle \mathcal{O}\left(\frac{1}{n}\right), & \text{otherwise}. \end{cases} 
\end{split}
\endeq
This can be written in the simplified form \eqref{hermite-winding} using the 
formulas \eqref{c-def} and \eqref{eq:hk} for $\mathfrak{c}$ and $\kappa_k$.  This 
completes the proof of Theorem \ref{thm-hermite-winding}.
\end{proof}

\subsection{Orthogonal polynomial asymptotics in the Hermite regime}
\label{subsec-hermite-norms}

We now prove the results for orthogonal polynomials in the Hermite regime, 
starting with Theorem \ref{thm:polynomials_asymptotics_crit}.
\begin{proof}[Proof of Theorem \ref{thm:polynomials_asymptotics_crit} (Orthogonal 
polynomials in the Hermite regime)]
From \eqref{IP3}, \eqref{R-def}, \eqref{T-def}, \eqref{st2}, 
\eqref{X-hermite-def}, and \eqref{Z-def},
\eq
\label{p-ito-ZYM}
p_{n,n}^{(T,\mu,\tau)}(z) = [{\bf P}_n(z)]_{11} = [{\bf Z}_n^{(k)}(z){\bf Y}_n^{(k)}(z){\bf M}^{(k)}(z)]_{11}e^{ng(z)}
\endeq
for $z\in\mathbb{C}\backslash(\Omega_+\cup\Omega_-)$ 
(here we have used the fact that ${\bf D}_+^u(z)$ and ${\bf D}_-^u(z)$ are 
upper-triangular to obtain the result for $-\delta\leq\Im z\leq\mu$).
From \eqref{VZ-bound} and the small-norm theory of Riemann--Hilbert problems 
\cite{DeiftZ:1993}, we have 
\eq
\label{Z-bound}
{\bf Z}_n^{(k)}(z) = \mathbb{I} + \mathcal{O}(n^{-1/2}).  
\endeq
Furthermore, from 
\eqref{Y-def}, \eqref{B-ito-Qpm}, \eqref{Q-def}, and \eqref{Qpm-def}, we also 
have 
\eq
\label{Y-bound}
{\bf Y}_n^{(k)}(z) = \mathbb{I} + \mathcal{O}\left(\frac{e^{2\pi n(\mu-\mu_c)}}{n^{k+\frac{1}{2}}}\right) + \mathcal{O}\left(\frac{n^{k-\frac{1}{2}}}{e^{2\pi n(\mu-\mu_c)}} \right).
\endeq
Thus we have
\eq
\label{p-ito-M11}
p_{n,n}^{(T,\mu,\tau)}(z) = e^{ng(z)}[{\bf M}^{(k)}(z)]_{11}\left(1 + \mathcal{O}\left(\frac{e^{2\pi n(\mu-\mu_c)}}{n^{k+\frac{1}{2}}}\right) + \mathcal{O}\left(\frac{n^{k-\frac{1}{2}}}{e^{2\pi n(\mu-\mu_c)}} \right)\right).
\endeq
By \eqref{Mk-def}, \eqref{Mk-out}, and \eqref{M0-out}, 
\eq
\label{Mk-11}
[{\bf M}^{(k)}(z)]_{11} = \left(\frac{\alpha(z)}{\alpha_\infty}\right)^k\left(\frac{\ga(z)+\ga(z)^{-1}}{2}\right), \quad z\in\mathbb{C}\backslash(\mathbb{D}_a\cup\mathbb{D}_b\cup\mathbb{D}_{\widetilde{z}_c}),
\endeq
which establishes \eqref{hermite_asymptotics_outer} and proves part (a).

For $z$ on the band but outside the Airy neighborhoods $\mathbb{D}_a$ and 
$\mathbb{D}_b$, we have (similar to \eqref{p-ito-ZYM} but taking into account 
the jumps on the lens boundaries)
\eq
p_{n,n}^{(T,\mu,\tau)}(z) = [{\bf Z}_n^{(k)}(z){\bf Y}_n^{(k)}(z){\bf M}_-^{(k)}(z)]_{11}e^{ng_-(z)} - [{\bf Z}_n^{(k)}(z){\bf Y}_n^{(k)}(z){\bf M}_-^{(k)}(z)]_{12}e^{ng_+(z)}.
\endeq
From \eqref{Z-bound} and \eqref{Y-bound},
\eq
\label{p-ito-M11-M12}
\begin{split}
& p_{n,n}^{(T,\mu,\tau)}(z) \\ 
& = \left([{\bf M}_-^{(k)}(z)]_{11}e^{ng_-(z)} - [{\bf M}_-^{(k)}(z)]_{12}e^{ng_+(z)}\right)\left(1 + \mathcal{O}\left(\frac{e^{2\pi n(\mu-\mu_c)}}{n^{k+\frac{1}{2}}}\right) + \mathcal{O}\left(\frac{n^{k-\frac{1}{2}}}{e^{2\pi n(\mu-\mu_c)}} \right)\right).
\end{split}
\endeq
Combining \eqref{Mk-def}, \eqref{Mk-out}, \eqref{outer-band-jump}, 
\eqref{M0-out}, and $\alpha_+\alpha_-=1$ from Lemma \ref{tau-properties-lem}(b) 
shows 
\eq
\label{Mk-12}
[{\bf M}_-^{(k)}(z)]_{12} = \alpha_\infty^{-k}\alpha_-(z)^{-k}[{\bf M}_{0-}^\text{(out)}(z)]_{12} = -\left(\frac{\alpha_+(z)}{\alpha_\infty}\right)^k[{\bf M}_{0+}^\text{(out)}(z)]_{11}.
\endeq
Inserting \eqref{Mk-11} and \eqref{Mk-12} into \eqref{p-ito-M11-M12} and using 
\eqref{M0-out} shows \eqref{hermite_asymptotics_band} and proves part (b).  

For $z\in\mathbb{D}_{\widetilde{z}_c}$ we may again use \eqref{p-ito-M11}, only 
now \eqref{Mk-def}, \eqref{Mk-zc}, \eqref{Fk-def}, \eqref{Mk-out}, 
\eqref{M0-out}, and \eqref{Hk-def} give
\eq
[{\bf M}^{(k)}]_{11} = \frac{\alpha^k}{\alpha_\infty^k W^k}\left(\frac{\gamma+\gamma^{-1}}{2}\right)\frac{\mathfrak{h}_k(n^{1/2}W)}{\kappa_k n^{k/2}} + \frac{\pi W^k}{\alpha_\infty^k\alpha^k}(\gamma-\gamma^{-1})\kappa_{k-1}\mathfrak{h}_{k-1}(n^{1/2}W)\frac{n^{k/2}}{e^{n\mathfrak{c}}}.
\endeq 
Looking at the $n$-dependent terms in the second summand, we have
\eq
\mathfrak{h}_{k-1}(n^{1/2}W)\frac{n^{k/2}}{e^{n\mathfrak{c}}} = \mathcal{O}\left(\frac{n^{k-\frac{1}{2}}}{e^{n\mathfrak{c}}}\right),
\endeq
which is the same as the second error term in \eqref{p-ito-M11}.  This implies 
\eqref{hermite_asymptotics_zc}, and the proof of part (c) is complete.
\end{proof}

We now prove Theorem \ref{thm-hermite-norms}.  
\begin{proof}[Proof of Theorem \ref{thm-hermite-norms} (Normalizing constants and recurrence coefficients in the Hermite regime)]
Recall from \eqref{IP4} that 
$h^{(T,\mu,\tau)}_{n, n}=[\mathbf P_{n,1}]_{12}$ and 
$\left(h^{(T,\mu,\tau)}_{n, n-1}\right)^{-1}=[\mathbf P_{n,1}]_{21}$.  From the 
off-diagonal entries of \eqref{P-large-z}, we have 
\eq
\label{P112-P121}
\begin{split}
[{\bf P}_{n,1}]_{12} & = -2\pi i\left(\left[{\bf M}_1^{(k)}\right]_{12} + \left[{\bf Z}_{n,1}^{(k)}\right]_{12} + [{\bf B}]_{12} \right)e^{n\ell},\\
[{\bf P}_{n,1}]_{21} & = \frac{-1}{2\pi i}\left(\left[{\bf M}_1^{(k)}\right]_{21} + \left[{\bf Z}_{n,1}^{(k)}\right]_{21} + [{\bf B}]_{21} \right)e^{-n\ell}.
\end{split}
\endeq
To find the necessary entries of ${\bf M}_1$, recall from \eqref{Mk-def} that 
${\bf M}^{(k)}(z)={\bf M}_k^\text{(out)}(z)$ for $|z|$ large enough.  
Combining \eqref{Mk-out}, \eqref{M0-out}, \eqref{gamma-def}, and 
Lemma \ref{tau-properties-lem}(e) gives
\eq
\begin{split}
\left[{\bf M}_k^\text{(out)}(z)\right]_{12} & = \frac{\gamma(z)-\gamma(z)^{-1}}{-2i}\alpha_\infty^{-k}\alpha(z)^{-k} = \left(\frac{i}{\sqrt{T}z}+\mathcal{O}\left(\frac{1}{z^2}\right)\right)\left(1+\mathcal{O}\left(\frac{1}{z}\right)\right)\alpha_\infty^{-2k}, \\
\left[{\bf M}_k^\text{(out)}(z)\right]_{21} & = \frac{\gamma(z)-\gamma(z)^{-1}}{2i}\alpha_\infty^k\alpha(z)^k = \left(\frac{-i}{\sqrt{T}z}+\mathcal{O}\left(\frac{1}{z^2}\right)\right)\left(1+\mathcal{O}\left(\frac{1}{z}\right)\right)\alpha_\infty^{2k}.
\end{split}
\endeq
Therefore
\eq
\label{Mk1-offdiag}
\left[{\bf M}_1^{(k)}\right]_{12} = \frac{i}{\sqrt{T}}\alpha_\infty^{-2k}, \quad \left[{\bf M}_1^{(k)}\right]_{21} = \frac{-i}{\sqrt{T}}\alpha_\infty^{2k}.
\endeq
We now calculate $\left[{\bf Z}_{n,1}^{(k)}\right]_{12} + [{\bf B}]_{12}$ and 
$\left[{\bf Z}_{n,1}^{(k)}\right]_{21} + [{\bf B}]_{21}$.  From 
\eqref{Z1-ito-FQFinv}, \eqref{Qhat-def}, and the fact that 
$\widetilde{\bf Q}=\mathcal{O}(n^{-1/2})$, we can write 
\eq
{\bf Z}_{n,1}^{(k)} = \begin{cases} \displaystyle \frac{-{\bf F}_0\widehat{\bf Q}{\bf F}_0^{-1}}{W_1 + Q_+[{\bf F}_0^{-1}{\bf F}_1]_{21}} + \mathcal{O}\left(\frac{1}{n}\right), & \displaystyle \left(k-\frac{1}{2}\right)\frac{\log n}{n} < \mathfrak{c} \le k\frac{\log n}{n}, \vspace{.05in} \\ \displaystyle \frac{-{\bf F}_0\widehat{\bf Q}{\bf F}_0^{-1}}{W_1 + Q_-[{\bf F}_0^{-1}{\bf F}_1]_{12}} + \mathcal{O}\left(\frac{1}{n}\right), & \displaystyle k\frac{\log n}{n} < \mathfrak{c} \le \left(k+\frac{1}{2}\right)\frac{\log n}{n}. \end{cases}
\endeq
Along with \eqref{B-formula}, this means we can write the simplified formulas
\eq
\begin{split}
\left[{\bf Z}_{n,1}^{(k)}\right]_{12} + [{\bf B}]_{12} & = 
\frac{\frac{-Q_+}{W_1}[{\bf F}_0]_{11}^2}{1 + \frac{Q_+}{W_1}[{\bf F}_0^{-1}{\bf F}_1]_{21}} 
+ \frac{\frac{Q_-}{W_1}[{\bf F}_0]_{12}^2}{1 + \frac{Q_-}{W_1}[{\bf F}_0^{-1}{\bf F}_1]_{12}} 
+ \mathcal{O}\left(\frac{1}{n}\right) \\ 
  & = 
\frac{-\frac{Q_+}{W_1}[{\bf F}_0]_{11}[{\bf F}_0]_{21}\cdot\frac{[{\bf F}_0]_{11}}{[{\bf F}_0]_{21}}}{1 + \frac{Q_+}{W_1}[{\bf F}_0^{-1}{\bf F}_1]_{21}} 
+ \frac{\frac{Q_-}{W_1}[{\bf F}_0]_{12}[{\bf F}_0]_{22}\cdot\frac{[{\bf F}_0]_{12}}{[{\bf F}_0]_{22}}}{1 + \frac{Q_-}{W_1}[{\bf F}_0^{-1}{\bf F}_1]_{12}} 
+ \mathcal{O}\left(\frac{1}{n}\right), \\ 
\left[{\bf Z}_{n,1}^{(k)}\right]_{21} + [{\bf B}]_{21} & = 
\frac{\frac{Q_+}{W_1}[{\bf F}_0]_{21}^2}{1 + \frac{Q_+}{W_1}[{\bf F}_0^{-1}{\bf F}_1]_{21}} 
-
\frac{\frac{Q_-}{W_1}[{\bf F}_0]_{22}^2}{1 + \frac{Q_-}{W_1}[{\bf F}_0^{-1}{\bf F}_1]_{12}} 
+ \mathcal{O}\left(\frac{1}{n}\right)\\
  & = 
\frac{\frac{Q_+}{W_1}[{\bf F}_0]_{11}[{\bf F}_0]_{21}\cdot\frac{[{\bf F}_0]_{21}}{[{\bf F}_0]_{11}}}{1 + \frac{Q_+}{W_1}[{\bf F}_0^{-1}{\bf F}_1]_{21}} 
-
\frac{\frac{Q_-}{W_1}[{\bf F}_0]_{12}[{\bf F}_0]_{22}\cdot\frac{[{\bf F}_0]_{22}}{[{\bf F}_0]_{12}}}{1 + \frac{Q_-}{W_1}[{\bf F}_0^{-1}{\bf F}_1]_{12}} 
+ \mathcal{O}\left(\frac{1}{n}\right).
\end{split}
\endeq 
Putting together \eqref{QWF0F1}, \eqref{Rpm-def}, \eqref{Fk-scalar-def}, and 
\eqref{Gk-def} leads us to 
\eq
\frac{Q_+}{W_1}[{\bf F}_0^{-1}{\bf F}_1]_{21} = (-1)^{n+1}F_k e^{2\pi i\tau} = G_k, \quad \frac{Q_-}{W_1}[{\bf F}_0^{-1}{\bf F}_1]_{12} = \frac{1}{(-1)^{n+1}F_{k-1}e^{2\pi i\tau}} = G_{k-1}^{-1}.
\endeq
Similarly, combining \eqref{QWFF-ito-Rpm}, \eqref{Rpm-def}, \eqref{Fk-scalar-def}, 
and \eqref{Gk-def} shows
\eq
\frac{Q_+}{W_1}[{\bf F}_0]_{11}[{\bf F}_0]_{21} = \frac{2\pi i}{T}G_k, \quad -\frac{Q_-}{W_1}[{\bf F}_0]_{12}[{\bf F}_0]_{22} = -\frac{2\pi i}{T}G_{k-1}^{-1}.
\endeq
In addition, from \eqref{F0-def}, \eqref{gamma-star}, and \eqref{lambda-def},
\eq
\frac{[{\bf F}_0]_{11}}{[{\bf F}_0]_{21}} = i\frac{\gamma_*+\gamma_*^{-1}}{\gamma_*-\gamma_*^{-1}}\alpha_\infty^{-2k} = -\lambda\alpha_\infty^{-2k}, \quad \frac{[{\bf F}_0]_{12}}{[{\bf F}_0]_{22}} = i\frac{\gamma_*-\gamma_*^{-1}}{\gamma_*+\gamma_*^{-1}}\alpha_\infty^{-2k} = \lambda^{-1}\alpha_\infty^{-2k}.
\endeq  
Together, the previous four equations give
\eq
\label{Z1pB-offdiag}
\begin{split}
\left[{\bf Z}_{n,1}^{(k)}\right]_{12} + [{\bf B}]_{12} & = 
\left(\frac{2\pi i}{T}\frac{G_k}{1 + G_k} \lambda 
+ \frac{2\pi i}{T}\frac{G_{k-1}^{-1}}{1 + G_{k-1}^{-1}} \lambda^{-1}
+ \mathcal{O}\left(\frac{1}{n}\right) \right)\alpha_\infty^{-2k}, \\ 
\left[{\bf Z}_{n,1}^{(k)}\right]_{21} + [{\bf B}]_{21} & = 
\left(- \frac{2\pi i}{T}\frac{G_k}{1 + G_k}\lambda^{-1}
-\frac{2\pi i}{T}\frac{G_{k-1}^{-1}}{1 + G_{k-1}^{-1}}\lambda
 + \mathcal{O}\left(\frac{1}{n}\right)\right)\alpha_\infty^{2k}.
\end{split}
\endeq 
Finally, putting together \eqref{P112-P121}, \eqref{Mk1-offdiag}, 
\eqref{Z1pB-offdiag}, \eqref{eq:LWg-definition}, and \eqref{eq:def-g-function} 
gives \eqref{hnn-asymptotics} and 
\eqref{hnnm1-asymptotics}, as desired.  

To prove \eqref{gamma1-asymptotics}, we simply take the product of \eqref{hnn-asymptotics} and 
\eqref{hnnm1-asymptotics} (see \eqref{eq:defn_of_gamma_nk}), noting that for $k>0$,
\eq
\frac{G_k}{G_{k-1}} = \bigO(n^{-1}),
\endeq
by \eqref{Gk-def} and \eqref{Fk-scalar-def}.  This concludes the proof.
\end{proof}

\end{document}